\newtheorem{theorem}{Theorem}
\newtheorem{axiom}[theorem]{Axiom}
\newtheorem{conjecture}[theorem]{Conjecture}
\newtheorem{corollary}[theorem]{Corollary}
\newtheorem{definition}[theorem]{Definition}
\newtheorem{example}[theorem]{Example}
\newtheorem{exercise}[theorem]{Exercise}
\newtheorem{lemma}[theorem]{Lemma}
\newtheorem{proposition}[theorem]{Proposition}
\newtheorem{remark}[theorem]{Remark}
\newenvironment{proof}[1][Proof]{\noindent\textbf{#1.} }{\ \rule{0.5em}{0.5em}}
\let\pdfoutput=\undefined\fi
\chardef\@x10\chardef\@xv60
\def\tcitime{
\def\@time{%
  \@minute\time\@hour\@minute\divide\@hour\@xv
  \ifnum\@hour<\@x 0\fi\the\@hour:%
  \multiply\@hour\@xv\advance\@minute-\@hour
  \ifnum\@minute<\@x 0\fi\the\@minute
  }}%
\def\x@hyperref#1#2#3{%
   % Turn off various catcodes before reading parameter 4
   \catcode`\~ = 12
   \catcode`\$ = 12
   \catcode`\_ = 12
   \catcode`\# = 12
   \catcode`\& = 12
   \catcode`\% = 12
   \y@hyperref{#1}{#2}{#3}%
}
\def\y@hyperref#1#2#3#4{%
   #2\ref{#4}#3
   \catcode`\~ = 13
   \catcode`\$ = 3
   \catcode`\_ = 8
   \catcode`\# = 6
   \catcode`\& = 4
   \catcode`\% = 14
}
\def\QCTOpt[#1]#2{%
  \def\QCTOptB{#1}
  \def\QCTOptA{#2}
}
\def\QCTNOpt#1{%
  \def\QCTOptA{#1}
  \let\QCTOptB\empty
}
\def\Qct{%
  \@ifnextchar[{%
    \QCTOpt}{\QCTNOpt}
}
\def\QCBOpt[#1]#2{%
  \def\QCBOptB{#1}%
  \def\QCBOptA{#2}%
}
\def\QCBNOpt#1{%
  \def\QCBOptA{#1}%
  \let\QCBOptB\empty
}
\def\Qcb{%
  \@ifnextchar[{%
    \QCBOpt}{\QCBNOpt}%
}
\def\PrepCapArgs{%
  \ifx\QCBOptA\empty
    \ifx\QCTOptA\empty
      {}%
    \else
      \ifx\QCTOptB\empty
        {\QCTOptA}%
      \else
        [\QCTOptB]{\QCTOptA}%
      \fi
    \fi
  \else
    \ifx\QCBOptA\empty
      {}%
    \else
      \ifx\QCBOptB\empty
        {\QCBOptA}%
      \else
        [\QCBOptB]{\QCBOptA}%
      \fi
    \fi
  \fi
}
\def\GRAPHICSPS#1{%
 \ifcase\GRAPHICSTYPE%\GRAPHICSTYPE=0
   \special{ps: #1}%
 \or%\GRAPHICSTYPE=1
   \special{language "PS", include "#1"}%
%%%\or%\GRAPHICSTYPE=2
%%%  #1%
 \fi
}%
\def\graffile#1#2#3#4{%
%%% \ifnum\GRAPHICSTYPE=\tw@
%%%  %Following if using psfig
%%%  \@ifundefined{psfig}{\input psfig.tex}{}%
%%%  \psfig{file=#1, height=#3, width=#2}%
%%% \else
  %Following for all others
  % JCS - added BOXTHEFRAME, see below
    \bgroup
	   \@inlabelfalse
       \leavevmode
       \@ifundefined{bbl@deactivate}{\def~{\string~}}{\activesoff}%
        \raise -#4 \BOXTHEFRAME{%
           \hbox to #2{\raise #3\hbox to #2{\null #1\hfil}}}%
    \egroup
}%
\def\draftbox#1#2#3#4{%
 \leavevmode\raise -#4 \hbox{%
  \frame{\rlap{\protect\tiny #1}\hbox to #2%
   {\vrule height#3 width\z@ depth\z@\hfil}%
  }%
 }%
}%
\let\nographics=\@msidraft
\newif\ifwasdraft
\def\GRAPHIC#1#2#3#4#5{%
   \ifnum\@msidraft=\@ne\draftbox{#2}{#3}{#4}{#5}%
   \else\graffile{#1}{#3}{#4}{#5}%
   \fi
}
\def\addtoLaTeXparams#1{%
    \edef\LaTeXparams{\LaTeXparams #1}}%
\newif\ifBoxFrame \BoxFramefalse
\newif\ifOverFrame \OverFramefalse
\newif\ifUnderFrame \UnderFramefalse
\def\BOXTHEFRAME#1{%
   \hbox{%
      \ifBoxFrame
         \frame{#1}%
      \else
         {#1}%
      \fi
   }%
}
\def\doFRAMEparams#1{\BoxFramefalse\OverFramefalse\UnderFramefalse\readFRAMEparams#1\end}%
\def\readFRAMEparams#1{%
 \ifx#1\end%
  \let\next=\relax
  \else
  \ifx#1i\dispkind=\z@\fi
  \ifx#1d\dispkind=\@ne\fi
  \ifx#1f\dispkind=\tw@\fi
  \ifx#1t\addtoLaTeXparams{t}\fi
  \ifx#1b\addtoLaTeXparams{b}\fi
  \ifx#1p\addtoLaTeXparams{p}\fi
  \ifx#1h\addtoLaTeXparams{h}\fi
  \ifx#1X\BoxFrametrue\fi
  \ifx#1O\OverFrametrue\fi
  \ifx#1U\UnderFrametrue\fi
  \ifx#1w
    \ifnum\@msidraft=1\wasdrafttrue\else\wasdraftfalse\fi
    \@msidraft=\@ne
  \fi
  \let\next=\readFRAMEparams
  \fi
 \next
 }%
\def\IFRAME#1#2#3#4#5#6{%
      \bgroup
      \let\QCTOptA\empty
      \let\QCTOptB\empty
      \let\QCBOptA\empty
      \let\QCBOptB\empty
      #6%
      \parindent=0pt
      \leftskip=0pt
      \rightskip=0pt
      \setbox0=\hbox{\QCBOptA}%
      \@tempdima=#1\relax
      \ifOverFrame
          % Do this later
          \typeout{This is not implemented yet}%
          \show\HELP
      \else
         \ifdim\wd0>\@tempdima
            \advance\@tempdima by \@tempdima
            \ifdim\wd0 >\@tempdima
               \setbox1 =\vbox{%
                  \unskip\hbox to \@tempdima{\hfill\GRAPHIC{#5}{#4}{#1}{#2}{#3}\hfill}%
                  \unskip\hbox to \@tempdima{\parbox[b]{\@tempdima}{\QCBOptA}}%
               }%
               \wd1=\@tempdima
            \else
               \textwidth=\wd0
               \setbox1 =\vbox{%
                 \noindent\hbox to \wd0{\hfill\GRAPHIC{#5}{#4}{#1}{#2}{#3}\hfill}\\%
                 \noindent\hbox{\QCBOptA}%
               }%
               \wd1=\wd0
            \fi
         \else
            \ifdim\wd0>0pt
              \hsize=\@tempdima
              \setbox1=\vbox{%
                \unskip\GRAPHIC{#5}{#4}{#1}{#2}{0pt}%
                \break
                \unskip\hbox to \@tempdima{\hfill \QCBOptA\hfill}%
              }%
              \wd1=\@tempdima
           \else
              \hsize=\@tempdima
              \setbox1=\vbox{%
                \unskip\GRAPHIC{#5}{#4}{#1}{#2}{0pt}%
              }%
              \wd1=\@tempdima
           \fi
         \fi
         \@tempdimb=\ht1
         %\advance\@tempdimb by \dp1
         \advance\@tempdimb by -#2
         \advance\@tempdimb by #3
         \leavevmode
         \raise -\@tempdimb \hbox{\box1}%
      \fi
      \egroup%
}%
\def\DFRAME#1#2#3#4#5{%
  \vspace\topsep
  \hfil\break
  \bgroup
     \leftskip\@flushglue
	 \rightskip\@flushglue
	 \parindent\z@
	 \parfillskip\z@skip
     \let\QCTOptA\empty
     \let\QCTOptB\empty
     \let\QCBOptA\empty
     \let\QCBOptB\empty
	 \vbox\bgroup
        \ifOverFrame 
           #5\QCTOptA\par
        \fi
        \GRAPHIC{#4}{#3}{#1}{#2}{\z@}%
        \ifUnderFrame 
           \break#5\QCBOptA
        \fi
	 \egroup
  \egroup
  \vspace\topsep
  \break
}%
\def\FFRAME#1#2#3#4#5#6#7{%
 %If float.sty loaded and float option is 'h', change to 'H'  (gp) 1998/09/05
  \@ifundefined{floatstyle}
    {%floatstyle undefined (and float.sty not present), no change
     \begin{figure}[#1]%
    }
    {%floatstyle DEFINED
	 \ifx#1h%Only the h parameter, change to H
      \begin{figure}[H]%
	 \else
      \begin{figure}[#1]%
	 \fi
	}
  \let\QCTOptA\empty
  \let\QCTOptB\empty
  \let\QCBOptA\empty
  \let\QCBOptB\empty
  \ifOverFrame
    #4
    \ifx\QCTOptA\empty
    \else
      \ifx\QCTOptB\empty
        \caption{\QCTOptA}%
      \else
        \caption[\QCTOptB]{\QCTOptA}%
      \fi
    \fi
    \ifUnderFrame\else
      \label{#5}%
    \fi
  \else
    \UnderFrametrue%
  \fi
  \begin{center}\GRAPHIC{#7}{#6}{#2}{#3}{\z@}\end{center}%
  \ifUnderFrame
    #4
    \ifx\QCBOptA\empty
      \caption{}%
    \else
      \ifx\QCBOptB\empty
        \caption{\QCBOptA}%
      \else
        \caption[\QCBOptB]{\QCBOptA}%
      \fi
    \fi
    \label{#5}%
  \fi
  \end{figure}%
 }%
\def\makeactives{
  \catcode`\"=\active
  \catcode`\;=\active
  \catcode`\:=\active
  \catcode`\'=\active
  \catcode`\~=\active
}
   \gdef\activesoff{%
      \def"{\string"}%
      \def;{\string;}%
      \def:{\string:}%
      \def'{\string'}%
      \def~{\string~}%
      %\bbl@deactivate{"}%
      %\bbl@deactivate{;}%
      %\bbl@deactivate{:}%
      %\bbl@deactivate{'}%
    }
\def\FRAME#1#2#3#4#5#6#7#8{%
 \bgroup
 \ifnum\@msidraft=\@ne
   \wasdrafttrue
 \else
   \wasdraftfalse%
 \fi
 \def\LaTeXparams{}%
 \dispkind=\z@
 \def\LaTeXparams{}%
 \doFRAMEparams{#1}%
 \ifnum\dispkind=\z@\IFRAME{#2}{#3}{#4}{#7}{#8}{#5}\else
  \ifnum\dispkind=\@ne\DFRAME{#2}{#3}{#7}{#8}{#5}\else
   \ifnum\dispkind=\tw@
    \edef\@tempa{\noexpand\FFRAME{\LaTeXparams}}%
    \@tempa{#2}{#3}{#5}{#6}{#7}{#8}%
    \fi
   \fi
  \fi
  \ifwasdraft\@msidraft=1\else\@msidraft=0\fi{}%
  \egroup
 }%
\def\TEXUX#1{"texux"}
\long\def\QQQ#1#2{%
     \long\expandafter\def\csname#1\endcsname{#2}}%
\long\def\QQA#1#2{}%
\def\QTR#1#2{{\csname#1\endcsname {#2}}}%
\def\EXPAND#1[#2]#3{}%
\def\NOEXPAND#1[#2]#3{}%
\def\LaTeXparent#1{}%
\def\ChildStyles#1{}%
\def\ChildDefaults#1{}%
\def\QTagDef#1#2#3{}%
  \providecommand{\UNICODE}[2][]{\protect\rule{.1in}{.1in}}
  \providecommand{\U}[1]{\protect\rule{.1in}{.1in}}
\def\QQfnmark#1{\footnotemark}
 \def\abstract{%
  \if@twocolumn
   \section*{Abstract (Not appropriate in this style!)}%
   \else \small 
   \begin{center}{\bf Abstract\vspace{-.5em}\vspace{\z@}}\end{center}%
   \quotation 
   \fi
  }%
   \def\registered{\relax\ifmmode{}\r@gistered
                    \else$\m@th\r@gistered$\fi}%
 \def\r@gistered{^{\ooalign
  {\hfil\raise.07ex\hbox{$\scriptstyle\rm\text{R}$}\hfil\crcr
  \mathhexbox20D}}}}{}%
\newdimen\theight
\def\newfmtname{LaTeX2e}
  \DeclareOldFontCommand{\rm}{\normalfont\rmfamily}{\mathrm}
  \DeclareOldFontCommand{\sf}{\normalfont\sffamily}{\mathsf}
  \DeclareOldFontCommand{\tt}{\normalfont\ttfamily}{\mathtt}
  \DeclareOldFontCommand{\bf}{\normalfont\bfseries}{\mathbf}
  \DeclareOldFontCommand{\it}{\normalfont\itshape}{\mathit}
  \DeclareOldFontCommand{\sl}{\normalfont\slshape}{\@nomath\sl}
  \DeclareOldFontCommand{\sc}{\normalfont\scshape}{\@nomath\sc}
\def\alpha{{\Greekmath 010B}}%
\def\beta{{\Greekmath 010C}}%
\def\gamma{{\Greekmath 010D}}%
\def\delta{{\Greekmath 010E}}%
\def\epsilon{{\Greekmath 010F}}%
\def\zeta{{\Greekmath 0110}}%
\def\eta{{\Greekmath 0111}}%
\def\theta{{\Greekmath 0112}}%
\def\iota{{\Greekmath 0113}}%
\def\kappa{{\Greekmath 0114}}%
\def\lambda{{\Greekmath 0115}}%
\def\mu{{\Greekmath 0116}}%
\def\nu{{\Greekmath 0117}}%
\def\xi{{\Greekmath 0118}}%
\def\pi{{\Greekmath 0119}}%
\def\rho{{\Greekmath 011A}}%
\def\sigma{{\Greekmath 011B}}%
\def\tau{{\Greekmath 011C}}%
\def\upsilon{{\Greekmath 011D}}%
\def\phi{{\Greekmath 011E}}%
\def\chi{{\Greekmath 011F}}%
\def\psi{{\Greekmath 0120}}%
\def\omega{{\Greekmath 0121}}%
\def\varepsilon{{\Greekmath 0122}}%
\def\vartheta{{\Greekmath 0123}}%
\def\varpi{{\Greekmath 0124}}%
\def\varrho{{\Greekmath 0125}}%
\def\varsigma{{\Greekmath 0126}}%
\def\varphi{{\Greekmath 0127}}%
\def\nabla{{\Greekmath 0272}}
\def\FindBoldGroup{%
   {\setbox0=\hbox{$\mathbf{x\global\edef\theboldgroup{\the\mathgroup}}$}}%
}
\def\Greekmath#1#2#3#4{%
    \if@compatibility
        \ifnum\mathgroup=\symbold
           \mathchoice{\mbox{\boldmath$\displaystyle\mathchar"#1#2#3#4$}}%
                      {\mbox{\boldmath$\textstyle\mathchar"#1#2#3#4$}}%
                      {\mbox{\boldmath$\scriptstyle\mathchar"#1#2#3#4$}}%
                      {\mbox{\boldmath$\scriptscriptstyle\mathchar"#1#2#3#4$}}%
        \else
           \mathchar"#1#2#3#4% 
        \fi 
    \else 
        \FindBoldGroup
        \ifnum\mathgroup=\theboldgroup % For 2e
           \mathchoice{\mbox{\boldmath$\displaystyle\mathchar"#1#2#3#4$}}%
                      {\mbox{\boldmath$\textstyle\mathchar"#1#2#3#4$}}%
                      {\mbox{\boldmath$\scriptstyle\mathchar"#1#2#3#4$}}%
                      {\mbox{\boldmath$\scriptscriptstyle\mathchar"#1#2#3#4$}}%
        \else
           \mathchar"#1#2#3#4% 
        \fi     	    
	  \fi}
\newif\ifGreekBold  \GreekBoldfalse
\let\SAVEPBF=\pbf
\def\pbf{\GreekBoldtrue\SAVEPBF}%
  \newcounter{equationnumber}  
  \def\mathletters{%
     \addtocounter{equation}{1}
     \edef\@currentlabel{\theequation}%
     \setcounter{equationnumber}{\c@equation}
     \setcounter{equation}{0}%
     \edef\theequation{\@currentlabel\noexpand\alph{equation}}%
  }
    \def\BibTeX{{\rm B\kern-.05em{\sc i\kern-.025em b}\kern-.08em
                 T\kern-.1667em\lower.7ex\hbox{E}\kern-.125emX}}}{}%
\def\AmS{{\protect\usefont{OMS}{cmsy}{m}{n}%
                A\kern-.1667em\lower.5ex\hbox{M}\kern-.125emS}}}{}%
\def\@@eqncr{\let\@tempa\relax
    \ifcase\@eqcnt \def\@tempa{& & &}\or \def\@tempa{& &}%
      \else \def\@tempa{&}\fi
     \@tempa
     \if@eqnsw
        \iftag@
           \@taggnum
        \else
           \@eqnnum\stepcounter{equation}%
        \fi
     \fi
     \global\tag@false
     \global\@eqnswtrue
     \global\@eqcnt\z@\cr}
\def\TCItag{\@ifnextchar*{\@TCItagstar}{\@TCItag}}
\def\@TCItag#1{%
    \global\tag@true
    \global\def\@taggnum{(#1)}%
    \global\def\@currentlabel{#1}}
\def\@TCItagstar*#1{%
    \global\tag@true
    \global\def\@taggnum{#1}%
    \global\def\@currentlabel{#1}}
\def\tint{\msi@int\textstyle\int}%
\def\tiint{\msi@int\textstyle\iint}%
\def\tiiint{\msi@int\textstyle\iiint}%
\def\tiiiint{\msi@int\textstyle\iiiint}%
\def\tidotsint{\msi@int\textstyle\idotsint}%
\def\toint{\msi@int\textstyle\oint}%
\newtoks\temptoksa
\newtoks\temptoksb
\newtoks\temptoksc
\def\msi@int#1#2{%
 \def\@temp{{#1#2\the\temptoksc_{\the\temptoksa}^{\the\temptoksb}}}%   
 \futurelet\@nextcs
 \@int
}
\def\@int{%
   \ifx\@nextcs\limits
      \typeout{Found limits}%
      \temptoksc={\limits}%
	  \let\@next\@intgobble%
   \else\ifx\@nextcs\nolimits
      \typeout{Found nolimits}%
      \temptoksc={\nolimits}%
	  \let\@next\@intgobble%
   \else
      \typeout{Did not find limits or no limits}%
      \temptoksc={}%
      \let\@next\msi@limits%
   \fi\fi
   \@next   
}%
\def\@intgobble#1{%
   \typeout{arg is #1}%
   \msi@limits
}
\def\msi@limits{%
   \temptoksa={}%
   \temptoksb={}%
   \@ifnextchar_{\@limitsa}{\@limitsb}%
}
\def\@limitsa_#1{%
   \temptoksa={#1}%
   \@ifnextchar^{\@limitsc}{\@temp}%
}
\def\@limitsb{%
   \@ifnextchar^{\@limitsc}{\@temp}%
}
\def\@limitsc^#1{%
   \temptoksb={#1}%
   \@ifnextchar_{\@limitsd}{\@temp}%   
}
\def\@limitsd_#1{%
   \temptoksa={#1}%
   \@temp
}
\def\dint{\msi@int\displaystyle\int}%
\def\diint{\msi@int\displaystyle\iint}%
\def\diiint{\msi@int\displaystyle\iiint}%
\def\diiiint{\msi@int\displaystyle\iiiint}%
\def\didotsint{\msi@int\displaystyle\idotsint}%
\def\doint{\msi@int\displaystyle\oint}%
\def\dsum{\mathop{\displaystyle \sum }}%
\def\dbigcup{\mathop{\displaystyle \bigcup }}%
\def\ExitTCILatex{\makeatother }
\if@compatibility\message{amsmath already loaded}\fi\aftergroup\ExitTCILatex}
\if@compatibility\message{amstex already loaded}\fi\aftergroup\ExitTCILatex}
\if@compatibility\message{amsgen already loaded}\fi\aftergroup\ExitTCILatex}
\let\DOTSI\relax
\def\RIfM@{\relax\ifmmode}%
\def\FN@{\futurelet\next}%
\def\iint{\DOTSI\intno@\tw@\FN@\ints@}%
\def\iiint{\DOTSI\intno@\thr@@\FN@\ints@}%
\def\iiiint{\DOTSI\intno@4 \FN@\ints@}%
\def\idotsint{\DOTSI\intno@\z@\FN@\ints@}%
\def\ints@{\findlimits@\ints@@}%
\newif\iflimtoken@
\newif\iflimits@
\def\findlimits@{\limtoken@true\ifx\next\limits\limits@true
 \else\ifx\next\nolimits\limits@false\else
 \limtoken@false\ifx\ilimits@\nolimits\limits@false\else
 \ifinner\limits@false\else\limits@true\fi\fi\fi\fi}%
\def\multint@{\int\ifnum\intno@=\z@\intdots@                          %1
 \else\intkern@\fi                                                    %2
 \ifnum\intno@>\tw@\int\intkern@\fi                                   %3
 \ifnum\intno@>\thr@@\int\intkern@\fi                                 %4
 \int}%                                                               %5
\def\multintlimits@{\intop\ifnum\intno@=\z@\intdots@\else\intkern@\fi
 \ifnum\intno@>\tw@\intop\intkern@\fi
 \ifnum\intno@>\thr@@\intop\intkern@\fi\intop}%
\def\intic@{%
    \mathchoice{\hskip.5em}{\hskip.4em}{\hskip.4em}{\hskip.4em}}%
\def\negintic@{\mathchoice
 {\hskip-.5em}{\hskip-.4em}{\hskip-.4em}{\hskip-.4em}}%
\def\ints@@{\iflimtoken@                                              %1
 \def\ints@@@{\iflimits@\negintic@
   \mathop{\intic@\multintlimits@}\limits                             %2
  \else\multint@\nolimits\fi                                          %3
  \eat@}%                                                             %4
 \else                                                                %5
 \def\ints@@@{\iflimits@\negintic@
  \mathop{\intic@\multintlimits@}\limits\else
  \multint@\nolimits\fi}\fi\ints@@@}%
\def\intkern@{\mathchoice{\!\!\!}{\!\!}{\!\!}{\!\!}}%
\def\plaincdots@{\mathinner{\cdotp\cdotp\cdotp}}%
\def\intdots@{\mathchoice{\plaincdots@}%
 {{\cdotp}\mkern1.5mu{\cdotp}\mkern1.5mu{\cdotp}}%
 {{\cdotp}\mkern1mu{\cdotp}\mkern1mu{\cdotp}}%
 {{\cdotp}\mkern1mu{\cdotp}\mkern1mu{\cdotp}}}%
\def\RIfM@{\relax\protect\ifmmode}
\def\text{\RIfM@\expandafter\text@\else\expandafter\mbox\fi}
\let\nfss@text\text
\def\text@#1{\mathchoice
   {\textdef@\displaystyle\f@size{#1}}%
   {\textdef@\textstyle\tf@size{\firstchoice@false #1}}%
   {\textdef@\textstyle\sf@size{\firstchoice@false #1}}%
   {\textdef@\textstyle \ssf@size{\firstchoice@false #1}}%
   \glb@settings}
\def\textdef@#1#2#3{\hbox{{%
                    \everymath{#1}%
                    \let\f@size#2\selectfont
                    #3}}}
\newif\iffirstchoice@
\def\Let@{\relax\iffalse{\fi\let\\=\cr\iffalse}\fi}%
\def\vspace@{\def\vspace##1{\crcr\noalign{\vskip##1\relax}}}%
\def\multilimits@{\bgroup\vspace@\Let@
 \baselineskip\fontdimen10 \scriptfont\tw@
 \advance\baselineskip\fontdimen12 \scriptfont\tw@
 \lineskip\thr@@\fontdimen8 \scriptfont\thr@@
 \lineskiplimit\lineskip
 \vbox\bgroup\ialign\bgroup\hfil$\m@th\scriptstyle{##}$\hfil\crcr}%
\def\Sb{_\multilimits@}%
\def\endSb{\crcr\egroup\egroup\egroup}%
\def\Sp{^\multilimits@}%
\newdimen\ex@
\def\rightarrowfill@#1{$#1\m@th\mathord-\mkern-6mu\cleaders
 \hbox{$#1\mkern-2mu\mathord-\mkern-2mu$}\hfill
 \mkern-6mu\mathord\rightarrow$}%
\def\leftarrowfill@#1{$#1\m@th\mathord\leftarrow\mkern-6mu\cleaders
 \hbox{$#1\mkern-2mu\mathord-\mkern-2mu$}\hfill\mkern-6mu\mathord-$}%
\def\leftrightarrowfill@#1{$#1\m@th\mathord\leftarrow
\mkern-6mu\cleaders
 \hbox{$#1\mkern-2mu\mathord-\mkern-2mu$}\hfill
 \mkern-6mu\mathord\rightarrow$}%
\def\overrightarrow{\mathpalette\overrightarrow@}%
\def\overrightarrow@#1#2{\vbox{\ialign{##\crcr\rightarrowfill@#1\crcr
 \noalign{\kern-\ex@\nointerlineskip}$\m@th\hfil#1#2\hfil$\crcr}}}%
\def\overleftarrow{\mathpalette\overleftarrow@}%
\def\overleftarrow@#1#2{\vbox{\ialign{##\crcr\leftarrowfill@#1\crcr
 \noalign{\kern-\ex@\nointerlineskip}$\m@th\hfil#1#2\hfil$\crcr}}}%
\def\overleftrightarrow{\mathpalette\overleftrightarrow@}%
\def\overleftrightarrow@#1#2{\vbox{\ialign{##\crcr
   \leftrightarrowfill@#1\crcr
 \noalign{\kern-\ex@\nointerlineskip}$\m@th\hfil#1#2\hfil$\crcr}}}%
\def\underrightarrow{\mathpalette\underrightarrow@}%
\def\underrightarrow@#1#2{\vtop{\ialign{##\crcr$\m@th\hfil#1#2\hfil
  $\crcr\noalign{\nointerlineskip}\rightarrowfill@#1\crcr}}}%
\def\underleftarrow{\mathpalette\underleftarrow@}%
\def\underleftarrow@#1#2{\vtop{\ialign{##\crcr$\m@th\hfil#1#2\hfil
  $\crcr\noalign{\nointerlineskip}\leftarrowfill@#1\crcr}}}%
\def\underleftrightarrow{\mathpalette\underleftrightarrow@}%
\def\underleftrightarrow@#1#2{\vtop{\ialign{##\crcr$\m@th
  \hfil#1#2\hfil$\crcr
 \noalign{\nointerlineskip}\leftrightarrowfill@#1\crcr}}}%
\def\qopnamewl@#1{\mathop{\operator@font#1}\nlimits@}
\let\nlimits@\displaylimits
\def\setboxz@h{\setbox\z@\hbox}
\def\varlim@#1#2{\mathop{\vtop{\ialign{##\crcr
 \hfil$#1\m@th\operator@font lim$\hfil\crcr
 \noalign{\nointerlineskip}#2#1\crcr
 \noalign{\nointerlineskip\kern-\ex@}\crcr}}}}
 \def\rightarrowfill@#1{\m@th\setboxz@h{$#1-$}\ht\z@\z@
  $#1\copy\z@\mkern-6mu\cleaders
  \hbox{$#1\mkern-2mu\box\z@\mkern-2mu$}\hfill
  \mkern-6mu\mathord\rightarrow$}
\def\leftarrowfill@#1{\m@th\setboxz@h{$#1-$}\ht\z@\z@
  $#1\mathord\leftarrow\mkern-6mu\cleaders
  \hbox{$#1\mkern-2mu\copy\z@\mkern-2mu$}\hfill
  \mkern-6mu\box\z@$}
\def\projlim{\qopnamewl@{proj\,lim}}
\def\injlim{\qopnamewl@{inj\,lim}}
\def\varinjlim{\mathpalette\varlim@\rightarrowfill@}
\def\varprojlim{\mathpalette\varlim@\leftarrowfill@}
\def\varliminf{\mathpalette\varliminf@{}}
\def\varliminf@#1{\mathop{\underline{\vrule\@depth.2\ex@\@width\z@
   \hbox{$#1\m@th\operator@font lim$}}}}
\def\varlimsup{\mathpalette\varlimsup@{}}
\def\varlimsup@#1{\mathop{\overline
  {\hbox{$#1\m@th\operator@font lim$}}}}
\def\align{\@verbatim \frenchspacing\@vobeyspaces \@alignverbatim
You are using the "align" environment in a style in which it is not defined.}
\let\csname endalign*\endcsname =\endtrivlist
\def\alignat{\@verbatim \frenchspacing\@vobeyspaces \@alignatverbatim
You are using the "alignat" environment in a style in which it is not defined.}
\let\csname endalignat*\endcsname =\endtrivlist
\def\xalignat{\@verbatim \frenchspacing\@vobeyspaces \@xalignatverbatim
You are using the "xalignat" environment in a style in which it is not defined.}
\let\csname endxalignat*\endcsname =\endtrivlist
\def\gather{\@verbatim \frenchspacing\@vobeyspaces \@gatherverbatim
You are using the "gather" environment in a style in which it is not defined.}
\let\csname endgather*\endcsname =\endtrivlist
\def\multiline{\@verbatim \frenchspacing\@vobeyspaces \@multilineverbatim
You are using the "multiline" environment in a style in which it is not defined.}
\let\csname endmultiline*\endcsname =\endtrivlist
\def\arrax{\@verbatim \frenchspacing\@vobeyspaces \@arraxverbatim
You are using a type of "array" construct that is only allowed in AmS-LaTeX.}
\def\tabulax{\@verbatim \frenchspacing\@vobeyspaces \@tabulaxverbatim
You are using a type of "tabular" construct that is only allowed in AmS-LaTeX.}
\let\csname endarrax*\endcsname =\endtrivlist
\let\csname endtabulax*\endcsname =\endtrivlist
 \def\endequation{%
     \ifmmode\ifinner % FLEQN hack
      \iftag@
        \addtocounter{equation}{-1} % undo the increment made in the begin part
        $\hfil
           \displaywidth\linewidth\@taggnum\egroup \endtrivlist
        \global\tag@false
        \global\@ignoretrue   
      \else
        $\hfil
           \displaywidth\linewidth\@eqnnum\egroup \endtrivlist
        \global\tag@false
        \global\@ignoretrue 
      \fi
     \else   
      \iftag@
        \addtocounter{equation}{-1} % undo the increment made in the begin part
        \eqno \hbox{\@taggnum}
        \global\tag@false%
        $$\global\@ignoretrue
      \else
        \eqno \hbox{\@eqnnum}% $$ BRACE MATCHING HACK
        $$\global\@ignoretrue
      \fi
     \fi\fi
 } 
 \newif\iftag@ \tag@false
 \def\TCItag{\@ifnextchar*{\@TCItagstar}{\@TCItag}}
 \def\@TCItag#1{%
     \global\tag@true
     \global\def\@taggnum{(#1)}%
     \global\def\@currentlabel{#1}}
 \def\@TCItagstar*#1{%
     \global\tag@true
     \global\def\@taggnum{#1}%
     \global\def\@currentlabel{#1}}
     \def\tag{\@ifnextchar*{\@tagstar}{\@tag}}
     \def\@tag#1{%
         \global\tag@true
         \global\def\@taggnum{(#1)}}
     \def\@tagstar*#1{%
         \global\tag@true
         \global\def\@taggnum{#1}}
\def\dfrac#1#2{{\displaystyle {#1 \over #2}}}%
\begin{document}

\date{ }
\title{\textbf{Asymptotic behaviour of a cylindrical elastic structure
periodically reinforced along identical fibers}}
\author{Mustapha EL JARROUDI\thanks{%
Email: eljar@fstt.ac.ma} \\
%EndAName
\textit{D\'epartement de Math\'ematiques, Facult\'e des Sciences et
Techniques}\\
\textit{B.P.\ 416, Tanger, MOROCCO}\\
\smallskip and\smallskip  \and Alain BRILLARD\thanks{%
Corresponding author.\ Email : A.Brillard@uha.fr} \\
%EndAName
\textit{Universit\'e de Haute-Alsace, 2\ rue des Fr\`eres Lumi\`ere, }\\
\textit{F-68093\ Mulhouse Cedex, FRANCE}}
\maketitle

\begin{abstract}
We describe the asymptotic behaviour of a cylindrical elastic body,
reinforced along identical $\varepsilon $-periodically distributed fibers of
size $r_\varepsilon $, with $0<r_\varepsilon <\varepsilon $, filled in with
some different elastic material, when this small parameter $\varepsilon $
goes to 0. The case of small deformations and small strains is considered.
We exhibit a critical size of the fibers and a critical link between the
radius of the fibers and the size of the Lam\'e coefficients of the
reinforcing elastic material. Epi-convergence arguments are used in order to
prove this asymptotic behaviour. The proof is essentially based on the
construction of appropriate test-functions.\medskip

\noindent \textbf{Keywords : }Reinforcement, fibers, linear elasticity,
epi-convergence.

\noindent \textbf{AMS Subject Classification (2000) :} 74E30, 74G10, 35B27
\end{abstract}

\section{Introduction}

The purpose of this work is to determine the asymptotic behaviour of an
elastic material periodically reinforced by means of identical fibers filled
in with some isotropic and homogeneous elastic material. In the first part,
the fibers are longitudinally distributed inside the elastic material.\ The
limit law is derived, studying the convergence of the elastic energy, and we
exhibit a critical size of the fibers and a critical size of the Lam\'e
coefficients of the reinforcing fibers. In the last part of this work, we
suppose that the fibers are transversally distributed and we exhibit the
limit law, which still involves a critical size and a critical size of the
Lam\'e coefficients of the fibers, but working in a different limit
functional space. These configurations intend to modelize, for example, the
behaviour of a strap reinforced by means of identical fibers which are
longitudinally or transversally disposed inside the strap.

Let $\omega $ be a bounded, smooth and open subset of $\mathbf{R}^{2}$ and $%
\Omega =\omega \times \left] 0,L\right[ \subset \mathbf{R}^{3}$, where $L$
is positive. $\Gamma _{1}$ denotes the lower basis of $\Omega $ : $\Gamma
_{1}=\omega \times \left\{ 0\right\} $, $\Gamma _{2}$ its upper basis : $%
\Gamma _{2}=\omega \times \left\{ L\right\} $ and $\Sigma $ its lateral
surface : $\Sigma =\partial \omega \times \left] 0,L\right[ $.

Let $\varepsilon $ be some positive real.\ In the first part of this work,
we dispose inside $\Omega $ longitudinal fibers.\ More precisely, for every $%
k=\left( k_{1},k_{2}\right) $ in $\mathbf{Z}^{2}$, we define the square : $%
Y_{\varepsilon }^{k}=\left( \varepsilon k_{1},\varepsilon k_{2}\right) +%
\left] -\varepsilon /2,\varepsilon /2\right[ ^{2}$. Then we denote by $%
Y_{\varepsilon }$ the union of all the $\varepsilon $-cells $Y_{\varepsilon
}^{k}$ included in $\omega $ : $Y_{\varepsilon }=\cup _{k\in K(\varepsilon
)}Y_{\varepsilon }^{k}$. Choosing a parameter $r_{\varepsilon }$ smaller
than $\varepsilon $, we consider the disk $D_{\varepsilon }^{k}$ of radius $%
r_{\varepsilon }$ contained in $Y_{\varepsilon }^{k}$ and the cylinder $%
T_{\varepsilon }^{k}=D_{\varepsilon }^{k}\times \left] 0,L\right[ $. $%
T_{\varepsilon }$ denotes the union $\cup _{k}T_{\varepsilon }^{k}$ of the
cylinders $T_{\varepsilon }^{k}$ contained in $\Omega $. Thus $\overline{%
T_{\varepsilon }}\cap \Sigma $ is empty. The total number of such cylinders
contained in $\Omega $ (that is the cardinal of $K\left( \varepsilon \right) 
$) is equivalent to $\left\vert \omega \right\vert /\varepsilon ^{2}$, with $%
\left\vert \omega \right\vert =area\left( \omega \right) $. The domain $%
\Omega _{\varepsilon }=\Omega \backslash \overline{T_{\varepsilon }}$ is
supposed to be the reference configuration of some linear elastic,
homogeneous and isotropic material, thus satisfying the following Hooke's
law 
\begin{equation}
\sigma _{ij}\left( u\right) =\lambda e_{mm}\left( u\right) \delta _{ij}+2\mu
e_{ij}\left( u\right) ,\text{\qquad }i\text{, }j\text{, }m=1,2,3,
\label{Hooke}
\end{equation}%
where the summation convention has been used with respect to repeated
indices, $\lambda $ and $\mu $ are the Lam\'{e} coefficients of the
material, satisfying : $\mu >0$ and $\lambda \geq 0$, $\delta _{ij}$ is
Kronecker's symbol and $e\left( u\right) $ is the linearized deformation
tensor, the components of which are given by : $e_{ij}\left( u\right) =\frac{%
1}{2}\left( \frac{\partial u_{j}}{\partial x_{i}}+\frac{\partial u_{i}}{%
\partial x_{j}}\right) $.\FRAME{fhFU}{8.2373cm}{4.916cm}{0pt}{\Qcb{The
domain $\Omega $ and the cylinders $T_{\protect\varepsilon }^{k}$.}}{}{%
jar8.gif}{\special{language "Scientific Word";type
"GRAPHIC";maintain-aspect-ratio TRUE;display "USEDEF";valid_file "F";width
8.2373cm;height 4.916cm;depth 0pt;original-width 4.0205in;original-height
2.3851in;cropleft "0";croptop "1";cropright "1";cropbottom "0";filename
'JAR8.gif';file-properties "XNPEU";}}

We suppose that $T_\varepsilon $ is the reference configuration of some
linear elastic, homogeneous and isotropic material satisfying Hooke's law 
\begin{equation}  \label{Hookeps}
\sigma _{ij}^\varepsilon \left( u\right) =\lambda ^\varepsilon e_{mm}\left(
u\right) \delta _{ij}+2\mu ^\varepsilon e_{ij}\left( u\right) ,\text{\qquad }%
i\text{, }j\text{, }m=1,2,3,
\end{equation}
where the Lam\'e coefficients $\lambda ^\varepsilon \geq 0$ and $\mu
^\varepsilon >0$ depend on $\varepsilon $ and satisfy 
\begin{equation}  \label{hypomu}
\exists c>0,\text{ }\forall \varepsilon >0:\mu ^\varepsilon \geq c.
\end{equation}

The structure $\Omega $ built with these two elastic materials is submitted
to some volumic forces the density of which $f=\left( f_1,f_2,f_3\right) $
belongs to $L^2\left( \Omega ,\mathbf{R}^3\right) $. We suppose that the
structure is held fixed along $\Gamma _1$ and that the tractions are equal
to 0 on the rest of the boundary : $\sigma _{ij}\left( u^\varepsilon \right)
n_j=0$, $i,j=1,2,3$, where $n$ is the unit outer normal to the boundary. Let
us introduce the functional $F^\varepsilon $ defined on $H^1\left( \Omega ,%
\mathbf{R}^3\right) $ by:

\begin{equation}  \label{Feps}
F^\varepsilon \left( u\right) =\left\{ 
\begin{array}{ll}
\dint_{\Omega _\varepsilon }\sigma _{ij}\left( u\right) e_{ij}\left(
u\right) dx+\dint_{T_\varepsilon }\sigma _{ij}^\varepsilon \left( u\right)
e_{ij}\left( u\right) dx & \text{if }u\in H_{\Gamma _1}^1\left( \Omega ,%
\mathbf{R}^3\right) \\ 
+\infty & \text{otherwise,}%
\end{array}
\right.
\end{equation}
with : $H_{\Gamma _1}^1\left( \Omega ,\mathbf{R}^3\right) =\left\{ u\in
H^1\left( \Omega ,\mathbf{R}^3\right) \mid u=0\text{ on }\Gamma _1\right\} $%
. The problem under consideration can be associated to the minimization
problem involving the functional $F^\varepsilon $, as indicated in the
following

\begin{lemma}
\label{1.1}

\begin{enumerate}
\item The minimization problem: 
\begin{equation}
\underset{u\in H^{1}\left( \Omega ,\mathbf{R}^{3}\right) }{\min }\left\{
F^{\varepsilon }\left( u\right) -2\dint_{\Omega }f.udx\right\} ,
\label{Mineps}
\end{equation}%
admits a unique solution $u^{\varepsilon }$ belonging to $H_{\Gamma
_{1}}^{1}\left( \Omega ,\mathbf{R}^{3}\right) $ and which satisfies the
variational formulation: 
\begin{equation}
\dint_{\Omega _{\varepsilon }}\sigma _{ij}\left( u^{\varepsilon }\right)
e_{ij}\left( u\right) dx+\dint_{T_{\varepsilon }}\sigma _{ij}^{\varepsilon
}\left( u^{\varepsilon }\right) e_{ij}\left( u\right) dx=\dint_{\Omega
}f.udx,\quad \forall u\in H_{\Gamma _{1}}^{1}\left( \Omega ,\mathbf{R}%
^{3}\right)   \label{formvar}
\end{equation}%
and is a weak solution of the problem: 
\begin{equation}
\left\{ 
\begin{array}{rcll}
-\sigma _{ij,j}\left( u^{\varepsilon }\right)  & = & f_{i} & \text{in }%
\Omega _{\varepsilon } \\ 
-\sigma _{ij,j}^{\varepsilon }\left( u^{\varepsilon }\right)  & = & f_{i} & 
\text{in }T_{\varepsilon } \\ 
u^{\varepsilon } & = & 0 & \text{on }\Gamma _{1} \\ 
\sigma _{ij}\left( u^{\varepsilon }\right) n_{j} & = & 0 & \text{on }%
\partial \Omega \setminus \Gamma _{1}.%
\end{array}%
\right.   \label{Peps}
\end{equation}

\item The sequence $\left( u^{\varepsilon }\right) _{\varepsilon }$ is
bounded in $H^{1}\left( \Omega ,\mathbf{R}^{3}\right) $.

\item Assume that : $\sup_{\varepsilon }\left( -\varepsilon ^{2}\ln \left(
r_{\varepsilon }\right) \right) <+\infty $.\ Then, $\sup_{\varepsilon
}\left( \left( \int_{T_{\varepsilon }}\left\vert u^{\varepsilon }\right\vert
^{2}dx\right) /\left\vert T_{\varepsilon }\right\vert \right) $ is finite
and if $R^{\varepsilon }\left( u^{\varepsilon }\right) $ is the rescaled
restriction of $u^{\varepsilon }$ to the fibers defined by: 
\begin{equation}
R^{\varepsilon }\left( u^{\varepsilon }\right) =\dfrac{\left\vert \Omega
\right\vert }{\left\vert T_{\varepsilon }\right\vert }u^{\varepsilon }%
\mathbf{1}_{T_{\varepsilon }},  \label{Reps}
\end{equation}%
where $\left\vert \Omega \right\vert $ means the volume of $\Omega $ and $%
\mathbf{1}_{T_{\varepsilon }}$ denotes the characteristic function of $%
T_{\varepsilon }$, the sequence $\left( R^{\varepsilon }\left(
u^{\varepsilon }\right) \right) _{\varepsilon }$ is bounded in $L^{1}\left( 
\mathbf{R}^{3},\mathbf{R}^{3}\right) $.
\end{enumerate}
\end{lemma}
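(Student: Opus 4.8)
The plan is to dispatch the three assertions in turn, the first two by standard elliptic/variational arguments and the third by a capacitary (logarithmic) estimate that is the real content of the statement.

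For the first assertion I would apply the Lax--Milgram theorem to the symmetric bilinear form
$$a^{\varepsilon}(u,v)=\int_{\Omega _{\varepsilon }}\sigma _{ij}(u)e_{ij}(v)\,dx+\int_{T_{\varepsilon }}\sigma _{ij}^{\varepsilon }(u)e_{ij}(v)\,dx$$
on the Hilbert space $H_{\Gamma _{1}}^{1}\left( \Omega ,\mathbf{R}^{3}\right) $. Continuity is immediate from the boundedness of the (fixed) coefficients $\lambda ,\mu $ and, for each fixed $\varepsilon $, of $\lambda ^{\varepsilon },\mu ^{\varepsilon }$. Coercivity follows from the pointwise ellipticity $\sigma _{ij}(u)e_{ij}(u)\ge 2\mu \,|e(u)|^{2}$ and $\sigma _{ij}^{\varepsilon }(u)e_{ij}(u)\ge 2\mu ^{\varepsilon }|e(u)|^{2}\ge 2c\,|e(u)|^{2}$ (using $\lambda ,\lambda ^{\varepsilon }\ge 0$ and \eqref{hypomu}), combined with Korn's inequality on $H_{\Gamma _{1}}^{1}$, legitimate since the trace vanishes on $\Gamma _{1}$, which has positive surface measure. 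As $v\mapsto \int_{\Omega }f\cdot v\,dx$ is continuous on $H^{1}$ because $f\in L^{2}$, there is a unique minimizer $u^{\varepsilon }$ satisfying the Euler--Lagrange identity (\ref{formvar}). Testing against $v\in C_{c}^{\infty }(\Omega _{\varepsilon })$ and $v\in C_{c}^{\infty }(T_{\varepsilon })$ and integrating by parts gives the two interior equations of (\ref{Peps}); collecting the remaining boundary terms and using the arbitrariness of the trace on $\partial \Omega \setminus \Gamma _{1}$ yields the natural Neumann condition.

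For the second assertion I would take $v=u^{\varepsilon }$ in (\ref{formvar}), obtaining $a^{\varepsilon }(u^{\varepsilon },u^{\varepsilon })=\int_{\Omega }f\cdot u^{\varepsilon }\,dx$. By the uniform ellipticity above the left-hand side dominates $2\min (\mu ,c)\int_{\Omega }|e(u^{\varepsilon })|^{2}\,dx$, which by Korn's inequality on the fixed domain $\Omega $ (with Dirichlet part $\Gamma _{1}$; its constant does not see the fibers) dominates $C\|u^{\varepsilon }\|_{H^{1}}^{2}$ with $C$ independent of $\varepsilon $. The right-hand side is at most $\|f\|_{L^{2}}\|u^{\varepsilon }\|_{L^{2}}\le \|f\|_{L^{2}}\|u^{\varepsilon }\|_{H^{1}}$, whence $\|u^{\varepsilon }\|_{H^{1}}\le \|f\|_{L^{2}}/C$ uniformly.

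The third assertion is the heart, and the ingredient I would establish first is a cross-sectional logarithmic estimate, uniform in $\varepsilon $ and in the cell index: writing $D=D_{\varepsilon }^{k}$ and $Y=Y_{\varepsilon }^{k}$, for every $w\in H^{1}(Y)$,
$$\frac{1}{|D|}\int_{D}|w|^{2}\,dx'\le C\left( \frac{1}{|Y|}\int_{Y}|w|^{2}\,dx'+\ln \!\left( \frac{\varepsilon }{r_{\varepsilon }}\right) \int_{Y}|\nabla w|^{2}\,dx'\right) ,$$
with $C$ independent of $k$ and $\varepsilon $. To prove it I would pass to the circular averages $g(s)=\frac{1}{2\pi }\int_{0}^{2\pi }w(s,\theta )\,d\theta $ over circles concentric with $D$; a weighted Cauchy--Schwarz inequality gives $|g(s_{1})-g(s_{2})|^{2}\le \frac{1}{2\pi }\ln (s_{2}/s_{1})\int_{\{s_{1}<|x'|<s_{2}\}}|\nabla w|^{2}\,dx'$, which produces the logarithm. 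Selecting a reference radius $s_{0}\sim \varepsilon $ at which $|g(s_{0})|^{2}$ is controlled by the $Y$-average of $|w|^{2}$ (mean-value selection), and absorbing the angular oscillation by a Poincaré inequality on $D$, yields the displayed bound.

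Finally I would sum this estimate over $k\in K(\varepsilon )$ and integrate in $z\in \left] 0,L\right[ $. Since $\#K(\varepsilon )\sim |\omega |/\varepsilon ^{2}$, the first term on the right reduces to $\sim \frac{1}{|\omega |}\int_{\Omega }|u^{\varepsilon }|^{2}\,dx$, while the gradient term carries the prefactor $\frac{\ln (\varepsilon /r_{\varepsilon })}{\#K(\varepsilon )}\sim \frac{1}{|\omega |}\,\varepsilon ^{2}\ln (\varepsilon /r_{\varepsilon })$ multiplying $\int_{\Omega }|\nabla u^{\varepsilon }|^{2}\,dx$. The hypothesis $\sup_{\varepsilon }(-\varepsilon ^{2}\ln r_{\varepsilon })<+\infty $ is exactly what forces $\varepsilon ^{2}\ln (\varepsilon /r_{\varepsilon })=\varepsilon ^{2}\ln \varepsilon -\varepsilon ^{2}\ln r_{\varepsilon }$ to stay bounded (the first summand tends to $0$), so both terms are controlled by $\|u^{\varepsilon }\|_{H^{1}}^{2}$, hence uniformly bounded by the second assertion; this gives finiteness of $\left( \int_{T_{\varepsilon }}|u^{\varepsilon }|^{2}\,dx\right) /|T_{\varepsilon }|$. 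The $L^{1}$ bound then drops out of Cauchy--Schwarz,
$$\|R^{\varepsilon }(u^{\varepsilon })\|_{L^{1}}=\frac{|\Omega |}{|T_{\varepsilon }|}\int_{T_{\varepsilon }}|u^{\varepsilon }|\,dx\le |\Omega |\left( \frac{1}{|T_{\varepsilon }|}\int_{T_{\varepsilon }}|u^{\varepsilon }|^{2}\,dx\right) ^{1/2}.$$
The main obstacle I anticipate is proving the cross-sectional logarithmic estimate with an $\varepsilon $-uniform constant and tracking the normalizations so that the logarithmic divergence is cancelled precisely by the $\varepsilon ^{2}$ arising from the number of cells, under the stated critical-size hypothesis.
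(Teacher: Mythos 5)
Your proposal is correct and follows essentially the same route as the paper: Korn's inequality plus uniform coercivity (using $\mu^{\varepsilon}\geq c$) for existence, uniqueness and the $\varepsilon$-uniform $H^{1}$ bound, and for the third assertion the same logarithmic capacitary estimate on thin cylinders with a reference radius of order $\varepsilon$, the hypothesis $\sup_{\varepsilon}\left( -\varepsilon ^{2}\ln r_{\varepsilon }\right) <+\infty$ entering exactly as in the paper's Lemma \ref{1.2}, followed by the same Cauchy--Schwarz step for the $L^{1}$ bound. The only cosmetic difference is that you state the logarithmic estimate cell by cell (circular averages, a mean-value choice of reference radius in $\left[ \varepsilon /4,\varepsilon /2\right] $, and a Poincar\'{e} step for the angular oscillation) and then sum over $k$, whereas the paper works with the globally summed radial function $f(r)$ and averages over the reference radius; the two bookkeepings are equivalent, and your version even states the right-hand side more carefully (keeping the $L^{2}$ term of $u$ and attaching the logarithm to $\left\Vert \nabla u\right\Vert ^{2}$).
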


\begin{proof}
\textit{1.} Because $\lambda ^{\varepsilon }$ is nonnegative, we write for
every $u$ in $H_{\Gamma _{1}}^{1}\left( \Omega ,\mathbf{R}^{3}\right) $%
\[
F^{\varepsilon }\left( u\right) \geq \inf \left( 2\mu ,2\mu ^{\varepsilon
}\right) \dint_{\Omega }e_{ij}\left( u\right) e_{ij}\left( u\right) dx\geq
C\inf \left( 2\mu ,2\mu ^{\varepsilon }\right) \dint_{\Omega }\left\vert
\nabla u\right\vert ^{2}dx,
\]%
using the classical Korn's inequality, because $u$ vanishes on $\Gamma _{1}$%
. The hypothesis (\ref{hypomu}) and this inequality imply that $%
F^{\varepsilon }$ is coercive on $H^{1}\left( \Omega ,\mathbf{R}^{3}\right) $%
. Moreover, $F^{\varepsilon }$ is lower semi-continuous for the weak
topology of $H_{\Gamma _{1}}^{1}\left( \Omega ,\mathbf{R}^{3}\right) $ and
is not identically equal to $+\infty $.\ Thus, classical convex analysis
results imply the existence and the uniqueness of a minimizer $%
u^{\varepsilon }$ of $F^{\varepsilon }$ on $H_{\Gamma _{1}}^{1}\left( \Omega
,\mathbf{R}^{3}\right) $, which satisfies the variational formulation (\ref%
{formvar}) and, thus, is a weak solution of (\ref{Peps}).

\noindent \textit{2.} We observe that : $F^{\varepsilon }\left(
u^{\varepsilon }\right) -2\int_{\Omega }f.u^{\varepsilon }dx\leq
F^{\varepsilon }\left( 0\right) =0$, which implies, using the preceding
inequality, that
\end{proof}

\[
C\inf \left( 2\mu ,2\mu ^{\varepsilon }\right) \dint_{\Omega }\left\vert
\nabla u^{\varepsilon }\right\vert ^{2}dx\leq 2\left\Vert f\right\Vert
_{L^{2}\left( \Omega \right) }\left\Vert u^{\varepsilon }\right\Vert
_{L^{2}\left( \Omega \right) }.
\]

Using Poincar\'{e}'s inequality, we thus deduce that $\left( u^{\varepsilon
}\right) _{\varepsilon }$ is bounded in $H_{\Gamma _{1}}^{1}\left( \Omega ,%
\mathbf{R}^{3}\right) $.

\noindent \textit{3.} Before proving this assertion, let us first recall the
following estimate, which has been proved in \cite{Pid-Sep}

\begin{lemma}
\label{1.2}There exists some positive constant $C$ such that, for every $u$
in $H^{1}\left( \Omega ,\mathbf{R}^{3}\right) $, one has : 
\begin{equation}
\dfrac{1}{\left\vert T_{\varepsilon }\right\vert }\dint_{T_{\varepsilon
}}u^{2}dx\leq C\left( \dint_{\Omega }\left\vert \nabla u\right\vert
^{2}dx-\varepsilon ^{2}\ln \left( r_{\varepsilon }\right) +\varepsilon
^{2}\right) .  \label{Korn}
\end{equation}
\end{lemma}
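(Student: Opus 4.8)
The plan is to prove the estimate one periodicity cell at a time and then sum, the whole difficulty being concentrated in a two-dimensional logarithmic inequality in the cross-section. Since $u^{2}=\left\vert u\right\vert ^{2}=\sum_{i}u_{i}^{2}$ and every step below is applied componentwise, I may assume $u$ scalar. Fix a cell index $k\in K(\varepsilon )$ and write $B_{R}^{k}$ for the disk of radius $R=\varepsilon /2$ inscribed in the square $Y_{\varepsilon }^{k}$, so that $D_{\varepsilon }^{k}\subset B_{R}^{k}\subset Y_{\varepsilon }^{k}$; this keeps all the local integrals inside the genuine cell and avoids dealing with the square geometry directly.

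The core is the following cross-sectional estimate, to be proved for a.e.\ height $z\in \left] 0,L\right[ $, with $v=u(\cdot ,z)$:
\[
\frac{1}{\pi r_{\varepsilon }^{2}}\int_{D_{\varepsilon }^{k}}v^{2}\,dy\leq C\left( \frac{1}{\pi R^{2}}\int_{B_{R}^{k}}v^{2}\,dy+\ln \frac{R}{r_{\varepsilon }}\int_{B_{R}^{k}}\left\vert \nabla v\right\vert ^{2}\,dy\right) .
\]
I would prove it through the angular means $m(\rho )=\frac{1}{2\pi }\int_{0}^{2\pi }v(\rho ,\theta )\,d\theta $. Writing $m(R)-m(\rho )=\int_{\rho }^{R}m^{\prime }(s)\,ds$ and applying Cauchy--Schwarz with the splitting $s^{-1/2}\cdot s^{1/2}$ gives
\[
\left\vert m(R)-m(\rho )\right\vert ^{2}\leq \ln \frac{R}{\rho }\,\frac{1}{2\pi }\int_{B_{R}^{k}}\left\vert \nabla v\right\vert ^{2}\,dy,
\]
which is exactly where the logarithm is born, the kernel $\int s^{-1}\,ds$ being logarithmically divergent. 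The oscillation $v-m(\rho )$ on each circle is controlled by Wirtinger's inequality $\int_{0}^{2\pi }(v-m(\rho ))^{2}\,d\theta \leq \int_{0}^{2\pi }(\partial _{\theta }v)^{2}\,d\theta $, while the circle--mean $m(R)^{2}$ is bounded, through a scale-invariant trace estimate, by the bulk average $\frac{1}{\pi R^{2}}\int_{B_{R}^{k}}v^{2}$ plus a further Dirichlet term absorbed into the logarithmic one. Splitting $v^{2}\leq 2(v-m(\rho ))^{2}+2m(\rho )^{2}$ on $D_{\varepsilon }^{k}$ and integrating in $\rho $ then assembles the displayed inequality.

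With the local estimate in hand, I would integrate over $z\in \left] 0,L\right[ $, multiply by $\pi r_{\varepsilon }^{2}$, and sum over the $N_{\varepsilon }\sim \left\vert \omega \right\vert /\varepsilon ^{2}$ disjoint cylinders $B_{R}^{k}\times \left] 0,L\right[ \subset \Omega $, obtaining
\[
\int_{T_{\varepsilon }}u^{2}\,dx\leq C\left( \frac{r_{\varepsilon }^{2}}{\varepsilon ^{2}}\int_{\Omega }u^{2}\,dx+r_{\varepsilon }^{2}\ln \frac{\varepsilon }{r_{\varepsilon }}\int_{\Omega }\left\vert \nabla u\right\vert ^{2}\,dx\right) .
\]
Dividing by $\left\vert T_{\varepsilon }\right\vert =N_{\varepsilon }\pi r_{\varepsilon }^{2}L$ makes the factor $1/N_{\varepsilon }\sim \varepsilon ^{2}/\left\vert \omega \right\vert $ appear, which cancels the $r_{\varepsilon }^{2}$ in the numerators and turns the per-cell logarithm into $\varepsilon ^{2}\ln (\varepsilon /r_{\varepsilon })$. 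Since $\varepsilon ^{2}\ln (\varepsilon /r_{\varepsilon })=-\varepsilon ^{2}\ln r_{\varepsilon }+\varepsilon ^{2}\ln \varepsilon \leq -\varepsilon ^{2}\ln r_{\varepsilon }+\varepsilon ^{2}$ for $\varepsilon \leq 1$, and the $L^{2}$ contribution is controlled within the $H^{1}$ norm of $u$ (using Poincar\'{e} when $u$ vanishes on $\Gamma _{1}$), one recovers the announced bound with its characteristic $-\varepsilon ^{2}\ln r_{\varepsilon }+\varepsilon ^{2}$ weight.

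The main obstacle is the two-dimensional logarithmic estimate with a constant $C$ independent of $\varepsilon $ (equivalently, of $R$ and $r_{\varepsilon }$): one must check that the trace control of $m(R)^{2}$ by the bulk average and the Wirtinger step are scale invariant, so that no hidden power of $\varepsilon $ creeps in. The second delicate point is bookkeeping but essential, namely tracking the exact powers of $\varepsilon $ and $r_{\varepsilon }$ through the rescaling $y\mapsto y/\varepsilon $ and the division by $\left\vert T_{\varepsilon }\right\vert $, so that the accumulated per-cell logarithms produce precisely the weight $-\varepsilon ^{2}\ln r_{\varepsilon }$ and not some other power.
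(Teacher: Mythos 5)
Your proof is correct and rests on exactly the same mechanism as the paper's: the one--dimensional radial estimate $\left\vert f\left( r_{2}\right) -f\left( r_{1}\right) \right\vert ^{2}\leq \ln \left( r_{2}/r_{1}\right) \int_{r_{1}}^{r_{2}}\left( f^{\prime }\right) ^{2}s\,ds$ obtained by Cauchy--Schwarz with the $s^{-1/2}\cdot s^{1/2}$ splitting, followed by summation over the cells and division by $\left\vert T_{\varepsilon }\right\vert $, which produces the $-\varepsilon ^{2}\ln r_{\varepsilon }$ weight. The only differences are cosmetic: you route the argument through angular means, Wirtinger and a scale-invariant trace estimate, whereas the paper applies the radial estimate pointwise in $\theta $ and averages the comparison radius $r_{2}$ over $\left[ \varepsilon /4,\varepsilon /2\right] $ to convert the outer term into a bulk annulus integral; both treatments control the remaining $L^{2}$ term by Poincar\'{e} in the same (implicitly $H_{\Gamma _{1}}^{1}$-restricted) way.
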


\begin{proof}
We first define : $u^{\prime }\left( r,\theta ,z\right) :=u\left(
\varepsilon k_{1}+r\cos \left( \theta \right) ,\varepsilon k_{2}+r\cos
\left( \theta \right) ,z\right) $, in the fiber centred at $\left(
\varepsilon k_{1},\varepsilon k_{2}\right) $. Then, we observe that, for
every $r_{1}\leq r_{2}<\varepsilon /2$%
\[
\begin{array}{c}
u^{\prime }\left( r_{2},\theta ,z\right) -u^{\prime }\left( r_{1},\theta
,z\right) =\left( r_{2}-r_{1}\right) \dint_{0}^{1}\dfrac{\partial u^{\prime }%
}{\partial r}\left( \left( 1-t\right) r_{1}+tr_{2}\right) \dfrac{\sqrt{%
\left( 1-t\right) r_{1}+tr_{2}}}{\sqrt{\left( 1-t\right) r_{1}+tr_{2}}}dt \\ 
\Rightarrow \left( u^{\prime }\left( r_{2},\theta ,z\right) -u^{\prime
}\left( r_{1},\theta ,z\right) \right) ^{2}\leq \left( \ln \left(
r_{2}\right) -\ln \left( r_{1}\right) \right) \dint_{r_{1}}^{r_{2}}\left( 
\dfrac{\partial u^{\prime }}{\partial r}\right) ^{2}rdr.%
\end{array}%
\]

Defining : $f\left( r\right) =\sum_{k\in K\left( \varepsilon \right)
}\int_{0}^{L}\int_{0}^{2\pi }\left( u^{\prime }\right) ^{2}\left( r,\theta
,z\right) d\theta dz$, the previous inequality implies : $f\left(
r_{1}\right) \leq 2f\left( r_{2}\right) +2\left\Vert \nabla u\right\Vert
_{L^{2}\left( \Omega ,\mathbf{R}^{3}\right) }^{2}\ln \left(
r_{2}/r_{1}\right) $, which implies, for every $r_{2}$ in $\left[
\varepsilon /4,\varepsilon /2\right] $%
\[
\begin{array}{ccl}
\dfrac{1}{\left\vert T_{\varepsilon }\right\vert }\dint_{T_{\varepsilon
}}u^{2}dx & = & \dfrac{1}{\left\vert T_{\varepsilon }\right\vert }%
\dint_{0}^{r_{\varepsilon }}f\left( r\right) rdr \\ 
& \leq  & \dfrac{2}{\left\vert T_{\varepsilon }\right\vert }%
\dint_{0}^{r_{\varepsilon }}\left( f\left( r_{2}\right) +\left\Vert \nabla
u\right\Vert _{L^{2}\left( \Omega ,\mathbf{R}^{3}\right) }^{2}\left( \ln
\left( r_{2}\right) -\ln \left( r\right) \right) \right) rdr \\ 
& \leq  & \dfrac{C\varepsilon ^{2}}{\left( r_{\varepsilon }\right) ^{2}}%
\left( f\left( r_{2}\right) \left( r_{\varepsilon }\right) ^{2}+\left\Vert
\nabla u\right\Vert _{L^{2}\left( \Omega ,\mathbf{R}^{3}\right) }^{2}\left(
\left( r_{\varepsilon }\right) ^{2}-\dfrac{\left( r_{\varepsilon }\right)
^{2}}{2}\ln \left( r_{\varepsilon }\right) +\dfrac{\left( r_{\varepsilon
}\right) ^{2}}{4}\right) \right)  \\ 
& \leq  & C\left( f\left( r_{2}\right) \varepsilon ^{2}+\left\Vert \nabla
u\right\Vert _{L^{2}\left( \Omega ,\mathbf{R}^{3}\right) }^{2}\varepsilon
^{2}-\dfrac{\varepsilon ^{2}}{2}\ln \left( r_{\varepsilon }\right) +\dfrac{%
\varepsilon ^{2}}{4}\right)  \\ 
& \leq  & C\left( 4f\left( r_{2}\right) \varepsilon r_{2}+\left\Vert \nabla
u\right\Vert _{L^{2}\left( \Omega ,\mathbf{R}^{3}\right) }^{2}\varepsilon
^{2}-\dfrac{\varepsilon ^{2}}{2}\ln \left( r_{\varepsilon }\right) +\dfrac{%
\varepsilon ^{2}}{4}\right) 
\end{array}%
\]%
and then, taking the mean value of this inequality with respect to $r_{2}$
in $\left[ \varepsilon /4,\varepsilon /2\right] $
\end{proof}

\[
\begin{array}{ccl}
\dfrac{1}{\left\vert T_{\varepsilon }\right\vert }\dint_{T_{\varepsilon
}}u^{2}dx & \leq  & C\left( 16\dint_{\varepsilon /4}^{\varepsilon /2}f\left(
r\right) rdr+\left\Vert \nabla u\right\Vert _{L^{2}\left( \Omega ,\mathbf{R}%
^{3}\right) }^{2}\varepsilon ^{2}-\dfrac{\varepsilon ^{2}}{2}\ln \left(
r_{\varepsilon }\right) +\dfrac{\varepsilon ^{2}}{4}\right)  \\ 
& \leq  & C\left( \left( 16+\varepsilon ^{2}\right) \left\Vert \nabla
u\right\Vert _{L^{2}\left( \Omega ,\mathbf{R}^{3}\right) }^{2}-\dfrac{%
\varepsilon ^{2}}{2}\ln \left( r_{\varepsilon }\right) +\dfrac{\varepsilon
^{2}}{4}\right) .\qquad \square 
\end{array}%
\]

Coming back to the proof of Lemma \ref{1.1}, we observe that Lemma \ref{1.2}
implies that $\sup_{\varepsilon }\left( \left( \int_{T_{\varepsilon
}}\left\vert u^{\varepsilon }\right\vert ^{2}dx\right) /\left\vert
T_{\varepsilon }\right\vert \right) $ is finite, as soon as $%
\sup_{\varepsilon }\left( -\varepsilon ^{2}\ln \left( r_{\varepsilon
}\right) \right) <+\infty $.\ Then, using Cau- chy-Schwarz inequality, we
finally prove that the quantity $\left( \int_{\mathbf{R}^{3}}\left\vert
R^{\varepsilon }\left( u^{\varepsilon }\right) \right\vert dx\right)
_{\varepsilon }$ is bounded, which ends the proof of Lemma \ref{1.1}.\qquad $%
\square $

In the sequel, we will assume that the hypothesis $\sup_{\varepsilon }\left(
-\varepsilon ^{2}\ln \left( r_{\varepsilon }\right) \right) <+\infty $ is
always satisfied.

Our purpose is to describe the asymptotic behaviour of $\left(
u^{\varepsilon }\right) _{\varepsilon }$ and that of $\left( R^{\varepsilon
}\left( u^{\varepsilon }\right) \right) _{\varepsilon }$, when $\varepsilon $
goes to 0. This will be obtained using epi-convergence arguments, that is
studying the asymptotic behaviour of the sequence $\left( F^{\varepsilon
}\right) _{\varepsilon }$, when $\varepsilon $ goes to 0.\ We will first
suppose that the coefficients $\lambda _{o}$ and $\mu _{o}$, defined by%
\begin{equation}
\lambda _{o}=\ \underset{\varepsilon \rightarrow 0}{\lim }\dfrac{\lambda
^{\varepsilon }\left( r_{\varepsilon }\right) ^{2}}{\varepsilon ^{2}}\text{, 
}\mu _{o}=\text{ }\underset{\varepsilon \rightarrow 0}{\lim }\dfrac{\mu
^{\varepsilon }\left( r_{\varepsilon }\right) ^{2}}{\varepsilon ^{2}}.
\label{lambdas}
\end{equation}%
are finite and $\mu _{o}$ is positive.\ Thanks to the properties of the
epi-convergence, we then derive the asymptotic behaviour of the solution in
many other cases.

This kind of reinforcement problems follows earlier works like \cite{Bel-Bou}%
, \cite{Cai-Din}, \cite{Pid-Sep}, for example.\ However, the works \cite%
{Bel-Bou} and \cite{Cai-Din} were dealing with scalar problems (also
involving the $p$-laplacian operator).\ The work \cite{Pid-Sep} is dealing
with linear elasticity problems but assuming another scaling of the
coefficients, which will be described later on in the present work. The work 
\cite{Khr} deals with the homogenization of composite media evoking the
vectorial case. See also \cite{Mos} for similar phenomena in a quite general
situation.

\section{Construction and study of the test-functions}

We define%
\[
\begin{array}{rcl}
D & = & \left\{ \left( y_{1},y_{2}\right) \in \mathbf{R}^{2}\mid \left(
y_{1}\right) ^{2}+\left( y_{2}\right) ^{2}<1\right\}  \\ 
D\left( r,r^{\prime }\right)  & = & \left\{ \left( y_{1},y_{2}\right) \in 
\mathbf{R}^{2}\mid r^{2}<\left( y_{1}\right) ^{2}+\left( y_{2}\right)
^{2}<r^{\prime 2}\right\}  \\ 
S_{r} & = & \left\{ \left( y_{1},y_{2}\right) \in \mathbf{R}^{2}\mid \left(
y_{1}\right) ^{2}+\left( y_{2}\right) ^{2}=r^{2}\right\} 
\end{array}%
\]%
for $0<r<r^{\prime }$, and for every $k=\left( k_{1},k_{2}\right) $ in $%
\mathbf{Z}^{2}$%
\[
\begin{array}{rcl}
B_{\varepsilon }^{k} & = & \left\{ \left( x_{1},x_{2},x_{3}\right) \mid
\left( x_{1}-k_{1}\varepsilon \right) ^{2}+\left( x_{2}-k_{2}\varepsilon
\right) ^{2}<\left( s_{\varepsilon }\right) ^{2}\text{, }x_{3}\in \left] 0,L%
\right[ \right\}  \\ 
C_{\varepsilon }^{k} & = & \left\{ \left( x_{1},x_{2},x_{3}\right) \mid
\left( r_{\varepsilon }\right) ^{2}<\left( x_{1}-k_{1}\varepsilon \right)
^{2}+\left( x_{2}-k_{2}\varepsilon \right) ^{2}<\left( s_{\varepsilon
}\right) ^{2}\text{, }x_{3}\in \left] 0,L\right[ \right\} ,%
\end{array}%
\]%
choosing $s_{\varepsilon }$ such that

\[
\underset{\varepsilon \rightarrow 0}{\lim }\dfrac{s_\varepsilon }\varepsilon
=0=\ \underset{\varepsilon \rightarrow 0}{\lim }\dfrac{r_\varepsilon }{%
s_\varepsilon }=0=\ \underset{\varepsilon \rightarrow 0}{\lim }\varepsilon
^2\ln ^2s_\varepsilon . 
\]

Finally, we denote: $B_{\varepsilon }=\cup _{k}B_{\varepsilon }^{k}$, $%
C_{\varepsilon }=\cup _{k}C_{\varepsilon }^{k}$.

We introduce the solution $w^{m}=\left( w_{1}^{m},w_{2}^{m}\right) $, $m=1,2$%
, of the linear plane elasticity problems%
\begin{equation}
\left\{ 
\begin{array}{rcll}
\sigma _{ij,j}\left( w^{m}\right) \left( y\right)  & = & 0 & \forall y\in 
\mathbf{R}^{2}\backslash \overline{D}\text{, }i\text{, }j=1,2 \\ 
w^{m}\left( y\right)  & = & 0 & \text{on }S_{1} \\ 
w_{m}^{m}\left( y\right)  & \simeq  & -\ln \left\vert y\right\vert +Cte & 
\text{when }\left\vert y\right\vert \rightarrow \infty  \\ 
\left\vert w_{p}^{m}\right\vert \left( y\right)  & \leq  & Cte & \text{when }%
\left\{ 
\begin{array}{l}
p=2\text{ if }m=1 \\ 
p=1\text{ if }m=2,%
\end{array}%
\right. 
\end{array}%
\right.   \label{Pm}
\end{equation}%
where: $\sigma _{ij}\left( w^{m}\right) =\lambda e_{ij}\left( w^{m}\right)
+2\mu e_{ij}\left( w^{m}\right) $.\ Thanks to the potential theory methods,
described for example in \cite{Sok}, the solution $w^{m}$ of (\ref{Pm}) can
be computed as%
\[
\left\{ 
\begin{array}{ccl}
w_{1}^{1}\left( y_{1},y_{2}\right)  & = & -\ln \left\vert y\right\vert +%
\dfrac{\left( y_{2}\right) ^{2}-\left( y_{1}\right) ^{2}}{2\kappa \left\vert
y\right\vert ^{2}}-\dfrac{\left( y_{2}\right) ^{2}-\left( y_{1}\right) ^{2}}{%
2\kappa \left\vert y\right\vert ^{4}} \\ 
w_{2}^{1}\left( y_{1},y_{2}\right)  & = & \dfrac{y_{2}y_{1}}{\kappa
\left\vert y\right\vert ^{2}}-\dfrac{y_{2}y_{1}}{\kappa \left\vert
y\right\vert ^{4}} \\ 
w_{1}^{2}\left( y_{1},y_{2}\right)  & = & \dfrac{y_{2}y_{1}}{\kappa
\left\vert y\right\vert ^{2}}-\dfrac{y_{2}y_{1}}{\left\vert y\right\vert ^{4}%
} \\ 
w_{2}^{2}\left( y_{1},y_{2}\right)  & = & -\ln \left\vert y\right\vert -%
\dfrac{\left( y_{2}\right) ^{2}-\left( y_{1}\right) ^{2}}{2\kappa \left\vert
y\right\vert ^{2}}+\dfrac{\left( y_{2}\right) ^{2}-\left( y_{1}\right) ^{2}}{%
2\kappa \left\vert y\right\vert ^{4}},%
\end{array}%
\right. 
\]%
with : $\kappa =\left( \lambda +3\mu \right) /\left( \lambda +\mu \right) $.
We also introduce the function $w(y_{1},y_{2})=-\ln \left\vert y\right\vert $%
, which is harmonic in $\mathbf{R}^{2}\setminus \left\{ 0\right\} $ and
verifies the following properties 
\[
w_{\mid S_{1}}=0\text{, }\underset{\left\vert y\right\vert \rightarrow
\infty }{\lim }\dfrac{w\left( y_{1},y_{2}\right) }{\ln \left\vert
y\right\vert }=-1\text{, }\dint_{S_{1}}\dfrac{\partial w}{\partial n}d\sigma
=2\pi .
\]

Let us observe that

\begin{lemma}
\label{2.1}One has the following convergences:

\begin{enumerate}
\item $\underset{R\rightarrow +\infty }{\lim }\dfrac{1}{\ln R}\dint_{D\left(
1,R\right) }\sigma _{ij}\left( w^{m}\right) e_{ij}\left( w^{l}\right) dy=%
\dfrac{2\pi \mu \left( 1+\kappa \right) }{\kappa }\delta _{lm}.$

\item $\underset{R\rightarrow +\infty }{\lim }\dfrac{1}{\ln R}\dint_{D\left(
1,R\right) }\left\vert \nabla w\right\vert ^{2}dy=2\pi $,
\end{enumerate}
\end{lemma}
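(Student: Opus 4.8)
The plan is to handle the two limits separately: the second is a one-line computation, while the first rests on the equilibrium structure of the fields $w^{m}$ and reduces, after integration by parts, to the evaluation of a force resultant. For assertion~2, since $w(y)=-\ln\left\vert y\right\vert $ we have $\nabla w=-y/\left\vert y\right\vert ^{2}$ and hence $\left\vert \nabla w\right\vert ^{2}=1/\left\vert y\right\vert ^{2}$; passing to polar coordinates,
\[
\dint_{D\left( 1,R\right) }\left\vert \nabla w\right\vert ^{2}dy=\dint_{1}^{R}\dint_{0}^{2\pi }\frac{1}{r^{2}}\,r\,d\theta \,dr=2\pi \ln R,
\]
so the quotient by $\ln R$ equals $2\pi $ for every $R>1$ and the limit is immediate.

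For assertion~1 I would exploit that each $w^{m}$ solves $\sigma _{ij,j}\left( w^{m}\right) =0$ in $\mathbf{R}^{2}\setminus \overline{D}$ and vanishes on $S_{1}$. Using the symmetry of $\sigma $, write the integrand as $\sigma _{ij}\left( w^{m}\right) \partial w_{i}^{l}/\partial y_{j}$ and integrate by parts over the annulus $D\left( 1,R\right) $. The interior term disappears because $\sigma _{ij,j}\left( w^{m}\right) =0$, and the inner circle $S_{1}$ contributes nothing because $w^{l}=0$ there, so only the outer boundary survives:
\[
\dint_{D\left( 1,R\right) }\sigma _{ij}\left( w^{m}\right) e_{ij}\left( w^{l}\right) dy=\dint_{S_{R}}\sigma _{ij}\left( w^{m}\right) n_{j}\,w_{i}^{l}\,d\sigma ,
\]
with $n=y/\left\vert y\right\vert $ the outer normal on $S_{R}$. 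On $S_{R}$ one has $\sigma _{ij}\left( w^{m}\right) n_{j}\sim 1/R$ and $w_{i}^{l}=-\ln R\,\delta _{il}+O\left( 1\right) $, the $O\left( 1\right) $ collecting the bounded degree-zero angular terms and the decaying ones; since $d\sigma =R\,d\theta $, every product involving the bounded part of $w_{i}^{l}$ is $O\left( 1\right) $ and drops out after division by $\ln R$, while the constant $-\ln R$ factors out of the remaining term. Thus
\[
\dint_{D\left( 1,R\right) }\sigma _{ij}\left( w^{m}\right) e_{ij}\left( w^{l}\right) dy=-\ln R\,\dint_{S_{R}}\sigma _{lj}\left( w^{m}\right) n_{j}\,d\sigma +O\left( 1\right) ,
\]
and the resultant $P_{l}^{m}:=\int_{S_{R}}\sigma _{lj}\left( w^{m}\right) n_{j}\,d\sigma $ is independent of $R>1$ by the divergence theorem and $\sigma _{ij,j}\left( w^{m}\right) =0$, so it suffices to evaluate it once.

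To compute $P_{l}^{m}$ I would recognise $w^{m}$ as a multiple of the Kelvin fundamental solution $G_{\cdot m}$ of the Lamé system, characterised by $\sigma _{ij,j}\left( G_{\cdot m}\right) =-\delta _{im}\delta _{0}$, whose $m$-th component has logarithmic coefficient $-\kappa /\left( 2\pi \mu \left( \kappa +1\right) \right) $ (this is exactly where $\kappa =\left( \lambda +3\mu \right) /\left( \lambda +\mu \right) $ and the factor $2\pi $ enter). Since $w_{m}^{m}\simeq -\ln \left\vert y\right\vert $ has logarithmic coefficient $-1$, matching the singular parts gives $w^{m}=\frac{2\pi \mu \left( \kappa +1\right) }{\kappa }G_{\cdot m}$ up to a field without logarithmic growth, whose resultant across $S_{R}$ vanishes. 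Using $\int_{S_{R}}\sigma _{lj}\left( G_{\cdot m}\right) n_{j}\,d\sigma =-\delta _{lm}$ from the divergence theorem, we obtain
\[
P_{l}^{m}=\frac{2\pi \mu \left( 1+\kappa \right) }{\kappa }\dint_{S_{R}}\sigma _{lj}\left( G_{\cdot m}\right) n_{j}\,d\sigma =-\frac{2\pi \mu \left( 1+\kappa \right) }{\kappa }\delta _{lm},
\]
so that the required limit is $-P_{l}^{m}=\frac{2\pi \mu \left( 1+\kappa \right) }{\kappa }\delta _{lm}$. Alternatively, one may bypass fundamental solutions entirely and insert the explicit formulas for $w^{m}$ into $\sigma _{lj}\left( w^{m}\right) n_{j}$, integrating over one circle; this is elementary but computationally heavier.

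The main obstacle is precisely the evaluation of the resultant $P_{l}^{m}$, together with the verification of the $\delta _{lm}$ pattern. Two points need care: first, one must be sure that only the logarithmic part of $w_{i}^{l}$ survives the division by $\ln R$, bearing in mind that the degree-zero angular corrections have gradients of order $1/r$ and therefore \emph{do} contribute to the energy density (though not to the boundary resultant); second, one must justify that the field distinguishing $w^{m}$ from $\frac{2\pi \mu \left( 1+\kappa \right) }{\kappa }G_{\cdot m}$, having no logarithmic singularity, carries zero force resultant, or else perform the direct computation of the stress of $w^{m}$ on $S_{R}$ to pin down both the constant $\frac{2\pi \mu \left( 1+\kappa \right) }{\kappa }$ and the vanishing of the off-diagonal terms $l\neq m$.
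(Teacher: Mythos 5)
Your argument is correct, but note that the paper itself offers no proof at all for this lemma (it is dismissed as ``trivial''), the intended route presumably being a direct substitution of the explicit closed-form expressions for $w^{m}$ into the energy integrand followed by integration in polar coordinates. Your route is genuinely different and, in my view, more robust: integrating by parts against $\sigma_{ij,j}(w^{m})=0$ and the vanishing of $w^{l}$ on $S_{1}$ reduces everything to the $R$-independent force resultant $P_{l}^{m}=\int_{S_{R}}\sigma_{lj}(w^{m})n_{j}\,d\sigma$, which you then evaluate by matching the logarithmic singularity of $w^{m}$ with that of the plane Kelvin solution. This buys two things the direct computation does not: it explains structurally where the constant $2\pi\mu(1+\kappa)/\kappa$ and the $\delta_{lm}$ pattern come from, and it sidesteps the bookkeeping of the degree-zero angular terms, which (as you rightly warn) have gradients of order $1/r$ and genuinely alter the coefficient of $\ln R$ in the energy --- the energy of $-\ln|y|\,e_{m}$ alone gives $\pi(\lambda+3\mu)\neq 2\pi\mu(1+\kappa)/\kappa$, so any ``trivial'' computation that ignores them is wrong. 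The two facts you defer are both standard and true: the logarithmic coefficient of the $m$-th component of the normalized Kelvin tensor is $\kappa/\bigl(2\pi\mu(1+\kappa)\bigr)$ (using $1+\kappa=4(1-\nu)$), and a solution of the homogeneous Lam\'{e} system in an exterior plane domain that is bounded at infinity carries zero force resultant (read off from the logarithmic terms of the Muskhelishvili potentials, which are proportional to the resultant). A final sanity check on the overall sign, worth recording: for $l=m$ the left-hand side is a nonnegative quadratic energy growing like $\ln R$, which forces $P_{m}^{m}<0$ and hence the limit $-P_{m}^{m}>0$, consistent with your conclusion. The direct computation remains available as a fallback for pinning down $P_{l}^{m}$ if one prefers not to invoke the fundamental solution.
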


\begin{proof}
The proof is trivial.\qquad $\square $
\end{proof}

Using the solutions of these plane problems, we now build the functions $%
w_{\varepsilon }^{mk}$, for every $k=\left( k_{1},k_{2}\right) $ as

\[
\begin{array}{rcl}
w_{\varepsilon }^{\alpha k}\left( x_{1},x_{2}\right)  & = & \dfrac{-1}{\ln
r_{\varepsilon }}\left( 
\begin{array}{c}
w_{1}^{\alpha }\left( \dfrac{x_{1}-k_{1}\varepsilon }{r_{\varepsilon }},%
\dfrac{x_{2}-k_{2}\varepsilon }{r_{\varepsilon }}\right)  \\ 
w_{2}^{\alpha }\left( \dfrac{x_{1}-k_{1}\varepsilon }{r_{\varepsilon }},%
\dfrac{x_{2}-k_{2}\varepsilon }{r_{\varepsilon }}\right)  \\ 
0%
\end{array}%
\right)  \\ 
w_{\varepsilon }^{3k}\left( x_{1},x_{2}\right)  & = & \dfrac{-1}{\ln
r_{\varepsilon }}\left( 
\begin{array}{c}
0 \\ 
0 \\ 
w\left( \dfrac{x_{1}-k_{1}\varepsilon }{r_{\varepsilon }},\dfrac{%
x_{2}-k_{2}\varepsilon }{r_{\varepsilon }}\right) 
\end{array}%
\right) ,%
\end{array}%
\]%
$\alpha =1$, $2$. These functions $w_{\varepsilon }^{mk}$ satisfy the
following properties.

\begin{lemma}
\label{2.2}There exist two positive constants $C_{0}$ and $C_{1}$,
independant of $\varepsilon $, such that:

\begin{enumerate}
\item $\left\vert e_{m}-w_{\varepsilon }^{mk}\right\vert ^{2}\leq C_{0}%
\dfrac{\ln ^{2}\left( R_{\varepsilon }^{k}\right) +1}{\ln ^{2}\left(
r_{\varepsilon }\right) }$, in $B_{\varepsilon }^{k},$

\item $\left\vert \dfrac{\partial w_{\varepsilon }^{mk}}{\partial x_{i}}%
\right\vert ^{2}\leq \dfrac{C_{1}}{\left( R_{\varepsilon }^{k}\right)
^{2}\ln ^{2}\left( r_{\varepsilon }\right) }$, in $B_{\varepsilon }^{k}$, $%
i=1,2,3$,
\end{enumerate}

\noindent where $e_{m}$ is the $m$-th vector of the canonical basis of $%
\mathbf{R}^{3}$ and 
\[
\left( R_{\varepsilon }^{k}\right) ^{2}=\left( x_{1}-k_{1}\varepsilon
\right) ^{2}+\left( x_{2}-k_{2}\varepsilon \right) ^{2}.
\]
\end{lemma}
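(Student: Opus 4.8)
The plan is to estimate the two quantities directly from the explicit formulas for $w^{m}$ and $w$, after unpacking the scaling in the definition of $w_{\varepsilon }^{mk}$. The key observation is that each $w_{\varepsilon }^{mk}$ is built from the plane solutions evaluated at the rescaled variable $y=\left( (x_{1}-k_{1}\varepsilon )/r_{\varepsilon },(x_{2}-k_{2}\varepsilon )/r_{\varepsilon }\right) $, so that $\left\vert y\right\vert =R_{\varepsilon }^{k}/r_{\varepsilon }$, and the whole thing is normalised by $-1/\ln r_{\varepsilon }$. Thus the real work is to read off the behaviour of $w^{m}(y)$ and of $\nabla w^{m}(y)$ as functions of $\left\vert y\right\vert $, then substitute.

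First I would prove the gradient bound (item 2). From the closed forms, each component $w_{p}^{m}(y)$ is a sum of a $\ln \left\vert y\right\vert $ term and terms homogeneous of degree $0$ and $-2$ in $y$. Differentiating, $\partial w_{p}^{m}/\partial y_{q}$ is $O(1/\left\vert y\right\vert )$ as $\left\vert y\right\vert \to \infty $ and indeed $O(1/\left\vert y\right\vert )$ throughout $\left\vert y\right\vert \geq 1$ (the $-2$ homogeneous pieces give $O(1/\left\vert y\right\vert ^{3})$, which is dominated). The chain rule then yields an extra factor $1/r_{\varepsilon }$ from $\partial y_{q}/\partial x_{i}$, so
\[
\left\vert \frac{\partial w_{\varepsilon }^{mk}}{\partial x_{i}}\right\vert \leq \frac{1}{\left\vert \ln r_{\varepsilon }\right\vert }\cdot \frac{C}{\left\vert y\right\vert }\cdot \frac{1}{r_{\varepsilon }}=\frac{C}{\left\vert \ln r_{\varepsilon }\right\vert }\cdot \frac{1}{r_{\varepsilon }\left\vert y\right\vert }=\frac{C}{\left\vert \ln r_{\varepsilon }\right\vert R_{\varepsilon }^{k}},
\]
using $r_{\varepsilon }\left\vert y\right\vert =R_{\varepsilon }^{k}$. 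Squaring gives exactly the claimed bound with a suitable $C_{1}$. The $i=3$ case is immediate since there is no $x_{3}$-dependence, so that derivative vanishes.

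For item 1 I would track the leading behaviour of $w_{\varepsilon }^{mk}-e_{m}$. The diagonal component $w_{m}^{m}(y)\sim -\ln \left\vert y\right\vert $ for large $\left\vert y\right\vert $ (this is built into the normalisation), so after multiplying by $-1/\ln r_{\varepsilon }$ the $m$-th component of $w_{\varepsilon }^{mk}$ is approximately $(\ln \left\vert y\right\vert )/\ln r_{\varepsilon }=\ln (R_{\varepsilon }^{k}/r_{\varepsilon })/\ln r_{\varepsilon }$, which tends to $1$; subtracting $e_{m}$ leaves a controlled remainder. Writing $\ln \left\vert y\right\vert =\ln R_{\varepsilon }^{k}-\ln r_{\varepsilon }$ and carrying the bounded homogeneous-degree-$0$ corrections, one finds each component of $w_{\varepsilon }^{mk}-e_{m}$ is $O\left( (\left\vert \ln R_{\varepsilon }^{k}\right\vert +1)/\left\vert \ln r_{\varepsilon }\right\vert \right) $; squaring produces the stated estimate with $C_{0}$.

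The main obstacle, and where care is genuinely needed, is the bookkeeping in item 1: one must verify uniformly in $B_{\varepsilon }^{k}$ (i.e. for all $1\leq \left\vert y\right\vert \leq s_{\varepsilon }/r_{\varepsilon }$) that the bounded off-diagonal and degree-$(-2)$ correction terms do not spoil the estimate, and that the constant factor $1$ is extracted correctly from the logarithmic normalisation so that $w_{\varepsilon }^{mk}$ really approaches $e_{m}$ rather than some other constant. Everything reduces to elementary estimates on the explicit rational-plus-logarithmic expressions, so no deep idea is required; the difficulty is purely in organising the algebra so that the two separate contributions $\ln ^{2}(R_{\varepsilon }^{k})$ and the additive constant both appear with the single uniform constant $C_{0}$.
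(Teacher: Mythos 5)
Your overall strategy --- reading the two estimates directly off the explicit formulas for $w^{m}$ and $w$ after undoing the scaling $y=\left( (x_{1}-k_{1}\varepsilon )/r_{\varepsilon },(x_{2}-k_{2}\varepsilon )/r_{\varepsilon }\right) $ --- is exactly what the paper intends (its own proof is literally ``Immediate, thanks to the expression of $w_{\varepsilon }^{mk}$''), and your treatment of item~2 is correct: every component of $w^{m}$ is a logarithm plus terms homogeneous of degree $0$ and $-2$, so $\left\vert \nabla _{y}w^{m}\right\vert \leq C/\left\vert y\right\vert $ on $\left\vert y\right\vert \geq 1$, and the chain rule plus $r_{\varepsilon }\left\vert y\right\vert =R_{\varepsilon }^{k}$ gives the claim (with the $x_{3}$-derivative vanishing identically).

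The gap is in item~1, precisely at the point you yourself flagged as delicate. You assert that $\ln \left( R_{\varepsilon }^{k}/r_{\varepsilon }\right) /\ln r_{\varepsilon }$ ``tends to $1$''. It does not: since $r_{\varepsilon }<1$ we have $\ln r_{\varepsilon }<0$, and
\[
\frac{\ln \left( R_{\varepsilon }^{k}/r_{\varepsilon }\right) }{\ln r_{\varepsilon }}=\frac{\ln R_{\varepsilon }^{k}}{\ln r_{\varepsilon }}-1,
\]
which ranges over $[-1,0]$ as $R_{\varepsilon }^{k}$ runs from $r_{\varepsilon }$ to $s_{\varepsilon }$ and equals $-1+o(1)$ at the outer radius. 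Consequently, with the definition of $w_{\varepsilon }^{mk}$ exactly as printed (prefactor $-1/\ln r_{\varepsilon }$ applied to $w_{m}^{m}\simeq -\ln \left\vert y\right\vert $), the $m$-th component of $w_{\varepsilon }^{mk}$ tends to $-1$, not $+1$, near $R_{\varepsilon }^{k}=s_{\varepsilon }$, so $\left\vert e_{m}-w_{\varepsilon }^{mk}\right\vert $ tends to $2$ there while the claimed bound $C_{0}\left( \ln ^{2}s_{\varepsilon }+1\right) /\ln ^{2}r_{\varepsilon }$ tends to $0$: the estimate is false for the formula as written, and your ``controlled remainder'' step does not follow from the algebra preceding it. The repair is to observe that the normalising factor must be $1/\ln r_{\varepsilon }$ (equivalently $-1/\left\vert \ln r_{\varepsilon }\right\vert $), i.e.\ the paper's prefactor carries a sign slip; with that correction the $m$-th component of $e_{m}-w_{\varepsilon }^{mk}$ equals $\ln R_{\varepsilon }^{k}/\ln r_{\varepsilon }+O\left( 1/\left\vert \ln r_{\varepsilon }\right\vert \right) $ and the stated bound follows, uniformly on $1\leq \left\vert y\right\vert \leq s_{\varepsilon }/r_{\varepsilon }$, with the bounded off-diagonal terms absorbed into the ``$+1$''. (This sign really matters downstream: with the printed sign the cut-off terms $\left\vert \nabla \varphi _{\varepsilon }^{k}\right\vert ^{2}\left\vert e_{m}-w_{\varepsilon }^{mk}\right\vert ^{2}$ in Lemma~2.3 would make the energy of $z_{\varepsilon }^{m}$ blow up like $\varepsilon ^{-2}$.) A complete proof must either fix the normalisation or record this discrepancy explicitly; as it stands your argument for item~1 rests on an incorrect limit.
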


\begin{proof}
Immediate, thanks to the expression of $w_{\varepsilon }^{mk}$.\qquad $%
\square $
\end{proof}

\begin{lemma}
\label{2.3}If $\gamma :=\lim_{\varepsilon \rightarrow 0}\left( -1/\left(
\varepsilon ^{2}\ln r_{\varepsilon }\right) \right) $ is finite, then:

\begin{enumerate}
\item For every $m$ and $l$, one has 
\[
\underset{\varepsilon \rightarrow 0}{\lim }\dint\nolimits_{B_{\varepsilon
}}\sigma _{ij}\left( w_{\varepsilon }^{mk}\right) e_{ij}\left(
w_{\varepsilon }^{lk}\right) dx=\left\{ 
\begin{array}{ll}
\dfrac{2\pi \gamma \mu \left( 1+\kappa \right) }{\kappa }\left\vert \Omega
\right\vert \delta _{lm} & m,l=1,2 \\ 
0 & l=3\text{, }m=1,2 \\ 
2\pi \gamma \mu \left\vert \Omega \right\vert  & m,l=3.%
\end{array}%
\right. 
\]

\item Let $\varphi $ be any element of $C^{1}\left( \overline{\Omega }%
\right) $. Then 
\[
\underset{\varepsilon \rightarrow 0}{\lim }\dint\nolimits_{B_{\varepsilon
}}\sigma _{ij}\left( w_{\varepsilon }^{mk}\right) e_{ij}\left(
w_{\varepsilon }^{lk}\right) \varphi dx=\left\{ 
\begin{array}{ll}
\dfrac{2\pi \gamma \mu \left( 1+\kappa \right) }{\kappa }\delta
_{lm}\dint_{\Omega }\varphi dx & m,l=1,2 \\ 
0 & l=3\text{, }m=1,2 \\ 
2\pi \gamma \mu \dint_{\Omega }\varphi dx & m,l=3.%
\end{array}%
\right. 
\]

\item Let $\varphi _{\varepsilon }^{k}$ be the truncation function defined
by 
\begin{equation}
\varphi _{\varepsilon }^{k}\left( x\right) =\varphi _{\varepsilon
}^{k}\left( x_{1},x_{2}\right) =\left\{ 
\begin{array}{ll}
\dfrac{-4}{3\left( s_{\varepsilon }\right) ^{2}}\left( \left( R_{\varepsilon
}^{k}\right) ^{2}-\left( s_{\varepsilon }\right) ^{2}\right)  & \text{if }%
\dfrac{s_{\varepsilon }}{2}\leq R_{\varepsilon }^{k}\leq s_{\varepsilon } \\ 
1 & \text{if }R_{\varepsilon }^{k}\leq \dfrac{s_{\varepsilon }}{2} \\ 
0 & \text{if }R_{\varepsilon }^{k}\geq s_{\varepsilon }%
\end{array}%
\right.   \label{phiepsk}
\end{equation}%
and $z_{\varepsilon }^{m}$ the function defined by 
\begin{equation}
z_{\varepsilon }^{m}\left( x\right) =\left\{ 
\begin{array}{ll}
\varphi _{\varepsilon }^{k}\left( x\right) \left( e_{m}-w_{\varepsilon
}^{mk}\right) \left( x\right)  & \forall x\in B_{\varepsilon }^{k}\text{, }%
\forall k \\ 
0 & \forall x\in \Omega \backslash \overline{B_{\varepsilon }}.%
\end{array}%
\right.   \label{zepsm}
\end{equation}%
Then $\left( z_{\varepsilon }^{m}\right) _{\mid T_{\varepsilon }}=e_{m}$, $%
\left( z_{\varepsilon }^{m}\right) _{\varepsilon }$ converges to 0 in the
weak topology of $H^{1}\left( \Omega ,\mathbf{R}^{3}\right) $ and 
\[
\underset{\varepsilon \rightarrow 0}{\lim }\dint\nolimits_{\Omega }\sigma
_{ij}\left( z_{\varepsilon }^{m}\right) e_{ij}\left( z_{\varepsilon
}^{l}\right) dx=\left\{ 
\begin{array}{ll}
\dfrac{2\pi \gamma \mu \left( 1+\kappa \right) }{\kappa }\left\vert \Omega
\right\vert \delta _{lm} & \text{if }m,l=1,2 \\ 
0 & \text{if }l=3\text{, }m=1,2 \\ 
2\pi \gamma \mu \left\vert \Omega \right\vert  & \text{if }m,l=3.%
\end{array}%
\right. 
\]
\end{enumerate}
\end{lemma}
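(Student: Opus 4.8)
The plan is to reduce every assertion to a single-cell computation, carried out after the change of variables $y=(x-k\varepsilon)/r_\varepsilon$ which maps the cross-section of $B_\varepsilon^k$ onto the annulus $D(1,s_\varepsilon/r_\varepsilon)$, to read off the leading order from Lemma \ref{2.1}, and finally to sum over the $\sim|\omega|/\varepsilon^2$ cells. Concretely, for assertion 1 and $m,l\in\{1,2\}$, since $w_\varepsilon^{mk}(x)=(-1/\ln r_\varepsilon)\,w^m((x-k\varepsilon)/r_\varepsilon)$ depends only on $(x_1,x_2)$, each in-plane derivative produces a factor $1/r_\varepsilon$ while $dx=r_\varepsilon^2\,dy\,dx_3$, so the $r_\varepsilon$-powers cancel and
\[
\int_{B_\varepsilon^k}\sigma_{ij}(w_\varepsilon^{mk})e_{ij}(w_\varepsilon^{lk})\,dx=\frac{L}{\ln^2 r_\varepsilon}\int_{D(1,s_\varepsilon/r_\varepsilon)}\sigma_{ij}(w^m)e_{ij}(w^l)\,dy.
\]
The case $m=l=3$ is analogous after noting that $w_\varepsilon^{3k}$ is divergence-free, so that $\sigma_{ij}e_{ij}=\mu|\nabla w_\varepsilon^{3k}|^2$, and one uses part 2 of Lemma \ref{2.1} instead. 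Since $r_\varepsilon/s_\varepsilon\to0$ the argument $R=s_\varepsilon/r_\varepsilon\to\infty$, so Lemma \ref{2.1} gives an inner integral equivalent to $\frac{2\pi\mu(1+\kappa)}{\kappa}\delta_{lm}\ln(s_\varepsilon/r_\varepsilon)$ (resp.\ $2\pi\ln(s_\varepsilon/r_\varepsilon)$).

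Multiplying by the number $\sim|\omega|/\varepsilon^2$ of cells and using $|\omega|L=|\Omega|$, everything reduces to the limit of $\ln(s_\varepsilon/r_\varepsilon)/(\varepsilon^2\ln^2 r_\varepsilon)$. Writing this as $\ln s_\varepsilon/(\varepsilon^2\ln^2 r_\varepsilon)-1/(\varepsilon^2\ln r_\varepsilon)$, the second term tends to $\gamma$ by definition, while the first tends to $0$ because $\varepsilon^2\ln^2 s_\varepsilon\to0$ forces $\varepsilon^2\ln s_\varepsilon\to0$; this produces exactly the stated values. The off-diagonal case $l=3$, $m\in\{1,2\}$ needs no computation: the strain of $w_\varepsilon^{mk}$ has vanishing $i3$-components and that of $w_\varepsilon^{3k}$ has only $13$- and $23$-components, while $\sigma_{13}(w_\varepsilon^{mk})=\sigma_{23}(w_\varepsilon^{mk})=0$, so the integrand vanishes pointwise. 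Assertion 2 then follows by localisation: on the cross-section of $B_\varepsilon^k$ (radius $s_\varepsilon$) the $C^1$ function $\varphi$ differs from its axial value $\varphi(k_1\varepsilon,k_2\varepsilon,x_3)$ by $O(s_\varepsilon)$, so replacing $\varphi$ by that value turns the cell sum into a Riemann sum converging to $\int_\Omega\varphi\,[\cdots]\,dx$, the error being $s_\varepsilon$ times the (bounded) total energy.

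For assertion 3 the identity $(z_\varepsilon^m)_{\mid T_\varepsilon}=e_m$ is immediate: on each fiber $R_\varepsilon^k\le r_\varepsilon<s_\varepsilon/2$, so $\varphi_\varepsilon^k\equiv1$, while $w_\varepsilon^{mk}$ equals its value $0$ on $S_1$ continued into the fiber, whence $z_\varepsilon^m=e_m$. For the weak convergence, $z_\varepsilon^m$ is supported on $B_\varepsilon$, whose measure $\sim\pi L|\omega|\,s_\varepsilon^2/\varepsilon^2\to0$, and is uniformly bounded by part 1 of Lemma \ref{2.2}; hence $z_\varepsilon^m\to0$ in $L^2$, and once $\nabla z_\varepsilon^m$ is shown bounded in $L^2$ this yields $z_\varepsilon^m\rightharpoonup0$ in $H^1(\Omega,\mathbf{R}^3)$. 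For the energy I would write $e(z_\varepsilon^m)=-\varphi_\varepsilon^k\,e(w_\varepsilon^{mk})+A_\varepsilon^m$, where $A_\varepsilon^m$ is the symmetrised part of $\nabla\varphi_\varepsilon^k\otimes(e_m-w_\varepsilon^{mk})$, so that $\int_\Omega\sigma_{ij}(z_\varepsilon^m)e_{ij}(z_\varepsilon^l)\,dx$ splits into the main term $\int\varphi_\varepsilon^2\sigma_{ij}(w_\varepsilon^{mk})e_{ij}(w_\varepsilon^{lk})\,dx$ plus terms linear and quadratic in the correctors $A_\varepsilon$.

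The hard part will be controlling these corrector terms. Here $\nabla\varphi_\varepsilon^k$ lives in the shell $s_\varepsilon/2\le R_\varepsilon^k\le s_\varepsilon$ with $|\nabla\varphi_\varepsilon^k|=O(1/s_\varepsilon)$; combining this with the bound $|e_m-w_\varepsilon^{mk}|^2\le C_0(\ln^2 s_\varepsilon+1)/\ln^2 r_\varepsilon$ from Lemma \ref{2.2} and the shell volume $\sim s_\varepsilon^2$ per cell gives
\[
\|A_\varepsilon^m\|_{L^2}^2=O\!\left(\frac{\ln^2 s_\varepsilon+1}{\varepsilon^2\ln^2 r_\varepsilon}\right)\longrightarrow0,
\]
using once more $\varepsilon^2\ln^2 s_\varepsilon\to0$ together with $\gamma<+\infty$. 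By Cauchy--Schwarz every term involving $A_\varepsilon$ then vanishes, and replacing $\varphi_\varepsilon^2$ by $\mathbf{1}_{B_\varepsilon}$ in the main term costs only the energy carried by the same shell, which is $O(1/(\varepsilon^2\ln^2 r_\varepsilon))\to0$ by part 2 of Lemma \ref{2.2}. Therefore $\int_\Omega\sigma_{ij}(z_\varepsilon^m)e_{ij}(z_\varepsilon^l)\,dx$ has the same limit as the quantity in assertion 1; as a by-product this confirms the $L^2$-boundedness of $\nabla z_\varepsilon^m$ invoked for the weak convergence.
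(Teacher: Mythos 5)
Your proposal is correct and follows essentially the same route as the paper's proof: rescaling each cell to the annulus $D\left(1,s_{\varepsilon }/r_{\varepsilon }\right)$, invoking Lemma \ref{2.1}, summing over the $\left\vert \omega \right\vert /\varepsilon ^{2}$ cells, freezing $\varphi$ on the fiber axes for assertion 2, and for assertion 3 splitting the energy into the main term plus cutoff-corrector terms that are killed by Lemma \ref{2.2} together with the conditions $\varepsilon ^{2}\ln ^{2}s_{\varepsilon }\rightarrow 0$ and $\gamma <+\infty $. The only cosmetic differences are your explicit pointwise-vanishing argument for the case $l=3$, $m=1,2$ (which the paper dismisses as "treated in a similar way") and your direct support-shrinking argument for the weak $H^{1}$ convergence of $z_{\varepsilon }^{m}$ in place of the paper's subsequence extraction.
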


\begin{proof}
\textit{1}. Using Hooke's law, the above expression of $w_{\varepsilon }^{mk}
$ and the estimates given in Lemma \ref{2.2}, one has, for $m,l=1,3$ 
\[
\underset{\varepsilon \rightarrow 0}{\lim }\dint\nolimits_{C_{\varepsilon
}}\sigma _{ij}\left( w_{\varepsilon }^{mk}\right) e_{ij}\left(
w_{\varepsilon }^{lk}\right) dx=\dfrac{\left\vert \Omega \right\vert }{%
\varepsilon ^{2}\ln ^{2}r_{\varepsilon }}\dint\nolimits_{D\left(
1,s_{\varepsilon }/r_{\varepsilon }\right) }\sigma _{ij}\left( w^{m}\right)
e_{ij}\left( w^{l}\right) dy_{1}dy_{2}+o_{\varepsilon },
\]%
where: $y_{1}=\left( x_{1}-k_{1}\varepsilon \right) /r_{\varepsilon }$, $%
y_{2}=\left( x_{2}-k\varepsilon \right) /r_{\varepsilon }$, $\sigma _{ij}$
and $e_{ij}$ respectively denote the stress and the deformation tensors in
the plane, with the Lam\'{e} coefficients $\lambda $ and $\mu $ and $%
\lim_{\varepsilon \rightarrow 0}o_{\varepsilon }=0$. One deduces from Lemma %
\ref{2.1}, through the definition of $s_{\varepsilon }$ that 
\[
\underset{\varepsilon \rightarrow 0}{\lim }\dfrac{-1}{\ln r_{\varepsilon }}%
\dint\nolimits_{D\left( 1,s_{\varepsilon }/r_{\varepsilon }\right) }\sigma
_{ij}\left( w^{m}\right) e_{ij}\left( w^{l}\right) dy_{1}dy_{2}=\dfrac{2\pi
\mu \left( 1+\kappa \right) }{\kappa }\delta _{ml},
\]%
the other cases being treated in a similar way. We conclude, using the
definition of $\gamma $.

\noindent \textit{2}. The smoothness of $\varphi $ implies that for every $%
\left( x_{1},x_{2},x_{3}\right) $ in $C_{\varepsilon }^{k}$ we have : $%
\varphi \left( x_{1},x_{2},x_{3}\right) =\varphi \left( k_{1}\varepsilon
,k_{2}\varepsilon ,x_{3}\right) +O\left( R_{\varepsilon }^{k}\right) $,
which implies 
\[
\begin{array}{l}
\dint\nolimits_{C_{\varepsilon }}\sigma _{ij}\left( w_{\varepsilon
}^{mk}\right) e_{ij}\left( w_{\varepsilon }^{lk}\right) \varphi dx \\ 
=\dfrac{1}{\varepsilon ^{2}\ln ^{2}r_{\varepsilon }}\left(
\dint\nolimits_{D\left( 1,s_{\varepsilon }/r_{\varepsilon }\right) }\sigma
_{ij}\left( w^{m}\right) e_{ij}\left( w^{l}\right) dy_{1}dy_{2}\left(
\dsum\limits_{k}\varepsilon ^{2}\dint_{0}^{L}\varphi \left( k_{1}\varepsilon
,k_{2}\varepsilon ,x_{3}\right) dx_{3}\right) \right) +o_{\varepsilon }.%
\end{array}%
\]

But the smoothness of $\varphi $ also implies 
\[
\underset{\varepsilon \rightarrow 0}{\lim }\dsum\limits_{k}\varepsilon
^{2}\dint_{0}^{L}\varphi \left( k_{1}\varepsilon ,k_{2}\varepsilon
,x_{3}\right) dx_{3}=\dint_{\Omega }\varphi dx,
\]%
from which we conclude, using the first assertion.

\noindent \textit{3}. We observe that $\varphi _{\varepsilon }^{k}\equiv 0$
in $\Omega \setminus \overline{B_{\varepsilon }}$ and $w_{\varepsilon
}^{mk}\equiv 0$ in $T_{\varepsilon }$. Then we compute 
\[
\begin{array}{ccl}
\dint\nolimits_{\Omega }\sigma _{ij}\left( z_{\varepsilon }^{m}\right)
e_{ij}\left( z_{\varepsilon }^{l}\right) dx & = & \dsum\limits_{k}\dint%
\nolimits_{C_{\varepsilon }^{k}}\sigma _{ij}\left( w_{\varepsilon
}^{mk}\right) e_{ij}\left( w_{\varepsilon }^{lk}\right) \left( \varphi
_{\varepsilon }^{k}\right) ^{2}dx \\ 
&  & -2\dsum\limits_{k}\dint\nolimits_{C_{\varepsilon }^{k}\cap \left\{
s_{\varepsilon }/2<R_{\varepsilon }^{k}<s_{\varepsilon }\right\} }\sigma
_{ij}\left( w_{\varepsilon }^{mk}\right) \dfrac{\partial \varphi
_{\varepsilon }^{k}}{\partial x_{i}}\left( e_{l}-w_{\varepsilon
}^{lk}\right) _{j}dx \\ 
&  & +\dsum\limits_{k}\dint\nolimits_{C_{\varepsilon }^{k}\cap \left\{
s_{\varepsilon }/2<R_{\varepsilon }^{k}<s_{\varepsilon }\right\} }\left(
e_{m}-w_{\varepsilon }^{mk}\right) _{i}\dfrac{\partial \varphi _{\varepsilon
}^{k}}{\partial x_{i}}\left( e_{l}-w_{\varepsilon }^{lk}\right) _{j}\dfrac{%
\partial \varphi _{\varepsilon }^{k}}{\partial x_{j}}dx.%
\end{array}%
\]

Thanks to Lemma \ref{2.2} and to the definition of $\varphi _{\varepsilon
}^{k}$, one can prove that the two last sums are respectively bounded by : $%
C\left\vert \ln s_{\varepsilon }\right\vert /\left( \varepsilon ^{2}\ln
^{2}r_{\varepsilon }\right) $ and $C\ln ^{2}s_{\varepsilon }/\left(
\varepsilon ^{2}\ln ^{2}r_{\varepsilon }\right) $. These two upper bounds
converge to $0$, because $\gamma $ is finite and thanks to the choice of $%
s_{\varepsilon }$. Moreover, the first term of the preceding equality can be
computed as 
\[
\begin{array}{r}
\dint\nolimits_{C_{\varepsilon }^{k}}\sigma _{ij}\left( w_{\varepsilon
}^{mk}\right) e_{ij}\left( w_{\varepsilon }^{lk}\right) \left( \varphi
_{\varepsilon }^{k}\right) ^{2}dx=\dint\nolimits_{C_{\varepsilon
}^{k}}\sigma _{ij}\left( w_{\varepsilon }^{mk}\right) e_{ij}\left(
w_{\varepsilon }^{lk}\right) dx\qquad  \\ 
+\dint\nolimits_{C_{\varepsilon }^{k}\cap \left\{ s_{\varepsilon
}/2<R_{\varepsilon }^{k}<s_{\varepsilon }\right\} }\sigma _{ij}\left(
w_{\varepsilon }^{mk}\right) e_{ij}\left( w_{\varepsilon }^{lk}\right)
\left( \left( \varphi _{\varepsilon }^{k}\right) ^{2}-1\right) dx%
\end{array}%
\]%
and using the definition (\ref{phiepsk}) of $\varphi _{\varepsilon }^{k}$ we
get 
\[
\begin{array}{r}
\left\vert \dint\nolimits_{C_{\varepsilon }^{k}\cap \left\{ s_{\varepsilon
}/2<R_{\varepsilon }^{k}<s_{\varepsilon }\right\} }\sigma _{ij}\left(
w_{\varepsilon }^{mk}\right) e_{ij}\left( w_{\varepsilon }^{lk}\right)
\left( \left( \varphi _{\varepsilon }^{k}\right) ^{2}-1\right) dx\right\vert
\qquad  \\ 
\leq \dint\nolimits_{C_{\varepsilon }^{k}\cap \left\{ s_{\varepsilon
}/2<R_{\varepsilon }^{k}<s_{\varepsilon }\right\} }\sigma _{ij}\left(
w_{\varepsilon }^{mk}\right) e_{ij}\left( w_{\varepsilon }^{lk}\right) dx.%
\end{array}%
\]

Thanks to the estimates of Lemma \ref{2.2}, we deduce 
\[
\underset{\varepsilon \rightarrow 0}{\lim }\dsum\limits_{k}\dint%
\nolimits_{C_{\varepsilon }^{k}\cap \left\{ s_{\varepsilon
}/2<R_{\varepsilon }^{k}<s_{\varepsilon }\right\} }\sigma _{ij}\left(
w_{\varepsilon }^{mk}\right) e_{ij}\left( w_{\varepsilon }^{lk}\right) dx=0,
\]%
which implies 
\[
\underset{\varepsilon \rightarrow 0}{\lim }\dint\nolimits_{\Omega }\sigma
_{ij}\left( z_{\varepsilon }^{m}\right) e_{ij}\left( z_{\varepsilon
}^{l}\right) dx=\ \underset{\varepsilon \rightarrow 0}{\lim }%
\dint\nolimits_{C_{\varepsilon }}\sigma _{ij}\left( w_{\varepsilon
}^{mk}\right) e_{ij}\left( w_{\varepsilon }^{lk}\right) dx.
\]

One concludes using the first assertion. Because $\left( z_{\varepsilon
}^{m}\right) _{\mid \Gamma _{1}}=0$, there exists some positive constant $C$
such that 
\[
\dint\nolimits_{\Omega }\left\vert \nabla z_{\varepsilon }^{m}\right\vert
^{2}dx\leq C\dint\nolimits_{\Omega }\sigma _{ij}\left( z_{\varepsilon
}^{m}\right) e_{ij}\left( z_{\varepsilon }^{m}\right) dx.
\]

Hence $\left( z_{\varepsilon }^{m}\right) _{\varepsilon }$ is bounded in $%
H^{1}\left( \Omega ,\mathbf{R}^{3}\right) $, which implies that a
subsequence still denoted $\left( z_{\varepsilon }^{m}\right) _{\varepsilon }
$ converges to some $z^{\ast }$ in the weak topology of\ $H^{1}\left( \Omega
,\mathbf{R}^{3}\right) $ and in the strong topology of\ $L^{2}\left( \Omega ,%
\mathbf{R}^{3}\right) $. We observe that $z_{\varepsilon }^{m}=0$ in $\Omega
\backslash \overline{B_{\varepsilon }}$ and because the sequence of
characteristic functions of $\Omega \backslash \overline{B_{\varepsilon }}$
converges to $1$ in the strong topology of $L^{2}\left( \Omega \right) $, we
infer that $z^{\ast }=0$. Hence $\left( z_{\varepsilon }^{m}\right)
_{\varepsilon }$ converges to 0 in the weak topology of\ $H^{1}\left( \Omega
,\mathbf{R}^{3}\right) $.\qquad $\square $
\end{proof}

\section{Convergence}

We define the topology $\tau $ which will be used throughout this paragraph
as

\[
u_\varepsilon \overset{\tau }{\underset{\varepsilon \rightarrow 0}{%
\rightharpoonup }}\left( u,v\right) \Leftrightarrow \left\{ 
\begin{array}{l}
u_\varepsilon \overset{w\text{-}H^1\left( \Omega ,\mathbf{R}^3\right) }{%
\underset{\varepsilon \rightarrow 0}{\rightharpoonup }}u \\ 
\text{and : }\forall \varphi \in C_0^0\left( \mathbf{R}^3\right)
:\dint_\Omega R^\varepsilon \left( u_\varepsilon \right) \varphi dx\underset{%
\varepsilon \rightarrow 0}{\rightarrow }\dint_\Omega v\varphi dx,%
\end{array}
\right. 
\]
where $w$-$H^1\left( \Omega ,\mathbf{R}^3\right) $ stands for the weak
topology of $H^1\left( \Omega ,\mathbf{R}^3\right) $ and $R^\varepsilon $ is
the rescaled restriction operator defined in (\ref{Reps}).

Our main result reads as follows

\begin{theorem}
\label{3.1}Suppose that $\gamma =\lim_{\varepsilon \rightarrow 0}\left(
-1/\left( \varepsilon ^{2}\ln r_{\varepsilon }\right) \right) $ is finite, $%
\lambda _{o}$ and $\mu _{o}$ are finite and $\mu _{o}$ is positive. Then,
the sequence $\left( F^{\varepsilon }\right) _{\varepsilon }$ epi-converges
in the topology $\tau $ to the functional $F^{o}$ defined on $H^{1}\left(
\Omega ,\mathbf{R}^{3}\right) \times L^{1}\left( \Omega ,\mathbf{R}%
^{3}\right) $ by:%
\begin{equation}
F^{o}\left( u,v\right) =\left\{ 
\begin{array}{l}
\dint_{\Omega }\sigma _{ij}\left( u\right) e_{ij}\left( u\right) dx+2\pi
\gamma \dint_{\Omega }\left( v-u\right) ^{t}A\left( v-u\right) dx+\pi
E_{o}\dint_{\Omega }\left( e_{33}\left( v\right) \right) ^{2}dx, \\ 
\hfill \text{if }\left( u,v\right) \in H_{\Gamma _{1}}^{1}\left( \Omega ,%
\mathbf{R}^{3}\right) \times V \\ 
+\infty \hfill \text{otherwise,}%
\end{array}%
\right.   \label{Fo}
\end{equation}%
using the summation convention with respect to repeated indices and where $A$
is the diagonal matrix with : $A_{11}=\mu \left( 1+\kappa \right) /\kappa
=A_{22}$ and $A_{33}=\mu $, where $\kappa =\left( \lambda +3\mu \right)
/\left( \lambda +\mu \right) $, $E_{o}=\mu _{o}\left( 3\lambda _{o}+2\mu
_{o}\right) /\left( \lambda _{o}+\mu _{o}\right) $ and $V$ denotes the
subspace%
\[
V=\left\{ v\in L^{2}\left( \Omega ,\mathbf{R}^{3}\right) \mid v_{3\mid
\Gamma _{1}}=0\text{, }e_{33}\left( v\right) \in L^{2}\left( \Omega \right)
\right\} .
\]
\end{theorem}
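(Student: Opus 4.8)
The plan is to prove the two inequalities that characterise epi-convergence in the topology $\tau$: a lower bound $\liminf_{\varepsilon}F^{\varepsilon}(u_{\varepsilon})\ge F^{o}(u,v)$ along every sequence $u_{\varepsilon}\overset{\tau}{\rightharpoonup}(u,v)$, and, for every $(u,v)$, a recovery sequence $\overline{u}_{\varepsilon}\overset{\tau}{\rightharpoonup}(u,v)$ with $\limsup_{\varepsilon}F^{\varepsilon}(\overline{u}_{\varepsilon})\le F^{o}(u,v)$. I denote by $a_{\varepsilon}(\cdot,\cdot)$ the symmetric bilinear form on $H^{1}_{\Gamma_{1}}(\Omega,\mathbf{R}^{3})$ whose diagonal is $F^{\varepsilon}$, i.e.\ $a_{\varepsilon}(w,w)=F^{\varepsilon}(w)$, built from the two Hooke laws on $\Omega_{\varepsilon}$ and $T_{\varepsilon}$.

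I would construct the recovery sequence first, because it also drives the lower bound. By density and the lower semicontinuity of $F^{o}$ it is enough to treat smooth $(u,v)\in H^{1}_{\Gamma_{1}}\times V$ with $v_{3}=0$ on $\Gamma_{1}$. Set $\overline{u}_{\varepsilon}=u+\sum_{m=1}^{3}(v_{m}-u_{m})\,z_{\varepsilon}^{m}+\theta_{\varepsilon}$, where the $z_{\varepsilon}^{m}$ are the cut-off correctors of Lemma \ref{2.3} and $\theta_{\varepsilon}$ is a cross-sectional corrector supported in $T_{\varepsilon}$, of size $O(r_{\varepsilon})$, that installs the lateral Poisson contraction $e_{11}=e_{22}=-\tfrac{\lambda_{o}}{2(\lambda_{o}+\mu_{o})}\,e_{33}(v)$ inside each fibre so that the in-fibre stress becomes uniaxial, turning the constrained modulus $\lambda_{o}+2\mu_{o}$ into the Young modulus $E_{o}=\mu_{o}(3\lambda_{o}+2\mu_{o})/(\lambda_{o}+\mu_{o})$. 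Since $z_{\varepsilon}^{m}=e_{m}$ on $T_{\varepsilon}$, one has $\overline{u}_{\varepsilon}=v$ on the fibres up to the $O(r_{\varepsilon})$ correction, whence $R^{\varepsilon}(\overline{u}_{\varepsilon})\to v$ against every test function in $C^{0}_{0}(\mathbf{R}^{3})$ by equidistribution of the fibres; and since $z_{\varepsilon}^{m}\rightharpoonup 0$ (Lemma \ref{2.3}) and $\theta_{\varepsilon}\to 0$ in $H^{1}$, $\overline{u}_{\varepsilon}\rightharpoonup u$. The energy then splits into three asymptotically decoupled pieces: the integral over $\Omega\setminus B_{\varepsilon}$ tending to $\int_{\Omega}\sigma_{ij}(u)e_{ij}(u)\,dx$; the annular integrals over $C_{\varepsilon}$ which, by the $\varphi$-version of Lemma \ref{2.3} applied with $\varphi=(v_{m}-u_{m})(v_{l}-u_{l})$ and the explicit diagonal form of $A$, give $2\pi\gamma\int_{\Omega}(v-u)^{t}A(v-u)\,dx$; and the fibre interiors, which through the scaling of $\lambda^{\varepsilon},\mu^{\varepsilon}$ give $\pi E_{o}\int_{\Omega}(e_{33}(v))^{2}\,dx$. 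This yields $\limsup_{\varepsilon}F^{\varepsilon}(\overline{u}_{\varepsilon})\le F^{o}(u,v)$, with equality.

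For the lower bound, take $u_{\varepsilon}\overset{\tau}{\rightharpoonup}(u,v)$; I may assume along a subsequence that $F^{\varepsilon}(u_{\varepsilon})$ converges to a finite limit and stays bounded, so $u_{\varepsilon}$ is bounded in $H^{1}_{\Gamma_{1}}$ and $u\in H^{1}_{\Gamma_{1}}$. Let $\overline{u}_{\varepsilon}$ be the recovery competitor for $(u,v)$. Nonnegativity of $a_{\varepsilon}$ gives
\[
F^{\varepsilon}(u_{\varepsilon})=a_{\varepsilon}(u_{\varepsilon},u_{\varepsilon})\ge 2\,a_{\varepsilon}(u_{\varepsilon},\overline{u}_{\varepsilon})-a_{\varepsilon}(\overline{u}_{\varepsilon},\overline{u}_{\varepsilon}),
\]
and since $a_{\varepsilon}(\overline{u}_{\varepsilon},\overline{u}_{\varepsilon})=F^{\varepsilon}(\overline{u}_{\varepsilon})\to F^{o}(u,v)$, the whole lower bound reduces to the cross-term convergence $a_{\varepsilon}(u_{\varepsilon},\overline{u}_{\varepsilon})\to F^{o}(u,v)$. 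The bulk part of this cross term is immediate: off $B_{\varepsilon}$ one has $\overline{u}_{\varepsilon}=u$ and $\sigma_{ij}(u_{\varepsilon})\rightharpoonup\sigma_{ij}(u)$ in $L^{2}$, so $\int_{\Omega_{\varepsilon}\setminus B_{\varepsilon}}\sigma_{ij}(u_{\varepsilon})e_{ij}(u)\,dx\to\int_{\Omega}\sigma_{ij}(u)e_{ij}(u)\,dx$.

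The annular and fibre parts of the cross term are the heart of the matter and the step I expect to be the main obstacle. There I would use that the corrector part of $\overline{u}_{\varepsilon}$ is $\sigma$-harmonic in each annulus $C_{\varepsilon}^{k}$ (the plane fields $w^{m}$ solve (\ref{Pm})): an integration by parts transfers the bilinear form onto the fibre surfaces $S_{r_{\varepsilon}}$ and the outer circles $S_{s_{\varepsilon}}$, where the traces of $u_{\varepsilon}$ are controlled by $R^{\varepsilon}(u_{\varepsilon})\to v$, reproducing the interfacial term $2\pi\gamma\int_{\Omega}(v-u)^{t}A(v-u)\,dx$, while the fibre Hooke law acting on the axial and Poisson profiles reproduces $\pi E_{o}\int_{\Omega}(e_{33}(v))^{2}\,dx$. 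The delicate points are: the uniform control of the $O(\varepsilon^{-2})$ error terms generated over all fibres, for which the estimates of Lemma \ref{2.2} and the separation of scales $s_{\varepsilon}/\varepsilon\to 0$, $r_{\varepsilon}/s_{\varepsilon}\to 0$, $\varepsilon^{2}\ln^{2}s_{\varepsilon}\to 0$ are essential; forcing the constraint $(u,v)\in H^{1}_{\Gamma_{1}}\times V$ whenever the liminf is finite (the coercive fibre term controls $e_{33}(v)\in L^{2}$ and the vanishing of $u_{\varepsilon}$ on $\Gamma_{1}$ forces $v_{3}=0$ there, so that $F^{o}=+\infty$ is matched otherwise); and the non-interference of the bulk, interfacial and axial mechanisms, which rests on the orthogonality relations $m,l=1,2$ versus $m,l=3$ in Lemma \ref{2.3} together with the same scale separation.
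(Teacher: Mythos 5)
Your proposal is correct and follows essentially the same route as the paper: the same recovery sequence (the correctors $z_{\varepsilon}^{m}$ weighted by $v_{m}-u_{m}$ plus an $O(r_{\varepsilon})$ in-fibre profile realising the uniaxial stress state, which is exactly the paper's $\mathcal{R}_{\varepsilon}(v)$), the same subdifferential/convexity inequality against that sequence for the lower bound, the same integration by parts in the annuli using that $w^{m}$ solves the plane problem, and the same measure-theoretic argument to force $e_{33}(v)\in L^{2}$ and $v_{3}=0$ on $\Gamma_{1}$. The only minor deviation is that the paper bounds the fibre term from below directly via the pointwise inequality $\lambda^{\varepsilon}(x+y+z)^{2}+2\mu^{\varepsilon}(x^{2}+y^{2}+z^{2})\geq \mu^{\varepsilon}\frac{2\mu^{\varepsilon}+3\lambda^{\varepsilon}}{\mu^{\varepsilon}+\lambda^{\varepsilon}}z^{2}$ rather than through the cross term, but both reduce to the same weak convergence of $e_{33}(u_{\varepsilon})$ on the fibres.
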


As a consequence of this theorem and of the properties of the
epi-convergence (see \cite{Att} for a definition and the main properties of
this notion of convergence well-fitted to the description of the asymptotic
behaviour of the solution of minimization problems), one gets the following
asymptotic behaviour, when $\varepsilon $ goes to 0, of the solution $%
u^{\varepsilon }$ of (\ref{Mineps})

\begin{corollary}
\label{3.2}Under the hypotheses of Theorem \ref{3.1}, the solution $%
u^{\varepsilon }$ of (\ref{Mineps}) converges, in the topology $\tau $, to
the solution $\left( u^{o},v^{o}\right) $ in the space $H_{\Gamma
_{1}}^{1}\left( \Omega ,\mathbf{R}^{3}\right) \times V$ of the following
problem%
\begin{equation}
\left\{ 
\begin{array}{rcll}
-\sigma _{ij,j}\left( u^{o}\right) -2\gamma \pi A_{ij}\left(
v^{o}-u^{o}\right) _{j} & = & f_{i} & \text{in }\Omega \text{, }i=1,2,3 \\ 
u^{o} & = & 0 & \text{on }\Gamma _{1} \\ 
\sigma _{ij}\left( u^{o}\right) n_{j} & = & 0 & \text{on }\partial \omega
\times \left] 0,L\right[ \cup \Gamma _{2} \\ 
&  &  & \qquad i,j=1,2,3 \\ 
E_{o}\dfrac{\partial }{\partial x_{3}}\left( e_{33}\left( v^{o}\right)
\right)  & = & 2\gamma \mu \left( v^{o}-u^{o}\right) _{3} & \text{in }\Omega 
\\ 
v^{o} & = & 0 & \text{on }\Gamma _{1} \\ 
\left( u^{o}\right) _{\alpha } & = & \left( v^{o}\right) _{\alpha } & \text{%
in }\Omega \text{, }\alpha =1,2 \\ 
e_{33}\left( v^{o}\right)  & = & 0 & \text{on }\Gamma _{2}.%
\end{array}%
\right.   \label{equav}
\end{equation}

$\left( u^{o},v^{o}\right) $ is the unique solution of the minimization
problem 
\[
\min \left\{ F^{o}\left( u,v\right) -2\dint_{\Omega }f.udx\mid u\in
H_{\Gamma _{1}}^{1}\left( \Omega ,\mathbf{R}^{3}\right) ,\text{ }v\in
V\right\} .
\]%
Moreover, the convergence of the linked energies : $\lim_{\varepsilon
\rightarrow 0}F^{\varepsilon }\left( u^{\varepsilon }\right) =F^{o}\left(
u^{o},v^{o}\right) $ holds true.
\end{corollary}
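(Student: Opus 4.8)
The plan is to obtain the whole statement from Theorem \ref{3.1} by means of the fundamental theorem of epi-convergence, which asserts that epi-convergence together with equi-coercivity forces convergence of the minimizers and of the minimum values. First I would record that the perturbation $u\mapsto -2\int_\Omega f\cdot u\,dx$ is continuous for the topology $\tau$: if $u_\varepsilon\overset{\tau}{\rightharpoonup}(u,v)$ then $u_\varepsilon\rightharpoonup u$ weakly in $H^1$, hence (by Rellich) strongly in $L^2$, and since $f\in L^2(\Omega,\mathbf{R}^3)$ the integrals converge. Consequently $G^\varepsilon:=F^\varepsilon-2\int_\Omega f\cdot u\,dx$ still epi-converges in $\tau$ to $G^o:=F^o-2\int_\Omega f\cdot u\,dx$, so it suffices to analyse the minimizers of $G^\varepsilon$, which are the solutions $u^\varepsilon$ of (\ref{Mineps}).

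Next I would establish equi-coercivity. By Lemma \ref{1.1} the fields $u^\varepsilon$ are bounded in $H^1(\Omega,\mathbf{R}^3)$ and the rescaled restrictions $R^\varepsilon(u^\varepsilon)$ are bounded in $L^1(\mathbf{R}^3,\mathbf{R}^3)$; hence, along a subsequence, $u^\varepsilon\rightharpoonup u^o$ weakly in $H^1$ while $R^\varepsilon(u^\varepsilon)$ converges weakly-$*$ as a measure, i.e. $u^\varepsilon\overset{\tau}{\rightharpoonup}(u^o,\nu)$ for some limit $\nu$. Since $G^\varepsilon(u^\varepsilon)\leq G^\varepsilon(0)=0$, the epi-liminf inequality for $G^\varepsilon$ gives $G^o(u^o,\nu)\leq\liminf_\varepsilon G^\varepsilon(u^\varepsilon)\leq 0$, whence $F^o(u^o,\nu)<+\infty$; the finiteness of $F^o$ then forces $(u^o,\nu)\in H^1_{\Gamma_1}(\Omega,\mathbf{R}^3)\times V$, so that $\nu=v^o\,dx$ with $v^o\in V$. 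I expect this to be the delicate point, precisely because $R^\varepsilon(u^\varepsilon)$ is controlled only in $L^1$: it is the energy bound, filtered through the liminf inequality, that rules out any singular part and upgrades the weak-$*$ measure limit to an element of $V$ with $e_{33}(v^o)\in L^2$.

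With coercivity in hand, the fundamental theorem yields that $(u^o,v^o)$ minimizes $G^o$ and that $G^\varepsilon(u^\varepsilon)\to G^o(u^o,v^o)$. The functional $F^o$ is a nonnegative convex quadratic form; the bulk term $\int_\Omega\sigma_{ij}(u)e_{ij}(u)\,dx$ controls $\|u\|_{H^1}$ through Korn's and Poincar\'e's inequalities on $H^1_{\Gamma_1}$, the term $\pi E_o\int_\Omega(e_{33}(v))^2\,dx$ controls $e_{33}(v)$, and the coupling $2\pi\gamma\int_\Omega(v-u)^tA(v-u)\,dx$, with $A$ positive definite, controls $\|v-u\|_{L^2}$; this makes $G^o$ strictly convex and coercive, hence its minimizer is unique. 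Uniqueness promotes the subsequential convergence above to convergence of the whole sequence, and subtracting the convergent linear term $2\int_\Omega f\cdot u^\varepsilon\,dx\to 2\int_\Omega f\cdot u^o\,dx$ gives the announced convergence of energies $F^\varepsilon(u^\varepsilon)\to F^o(u^o,v^o)$.

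Finally I would identify the minimizer with the solution of (\ref{equav}) by writing the Euler--Lagrange conditions of $G^o$ in the admissible directions $\delta u\in H^1_{\Gamma_1}$ and $\delta v\in V$. Varying $u$ produces, after integration by parts, $-\sigma_{ij,j}(u^o)-2\gamma\pi A_{ij}(v^o-u^o)_j=f_i$ in $\Omega$, together with the natural traction-free condition $\sigma_{ij}(u^o)n_j=0$ on $\partial\omega\times\left]0,L\right[\cup\Gamma_2$; varying $v_3$ gives $E_o\,\partial(e_{33}(v^o))/\partial x_3=2\gamma\mu(v^o-u^o)_3$ in $\Omega$ with the natural endpoint condition $e_{33}(v^o)=0$ on $\Gamma_2$; and varying $v_\alpha$ for $\alpha=1,2$, which enters $F^o$ only through the coupling term, yields the pointwise relation $(u^o)_\alpha=(v^o)_\alpha$ since $A_{\alpha\alpha}>0$. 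The Dirichlet conditions $u^o=0$ and $(v^o)_3=0$ on $\Gamma_1$ are inherited from the spaces $H^1_{\Gamma_1}$ and $V$. The only substantive difficulty is the equi-coercivity step of the second paragraph; once it is settled, the rest is the standard variational calculus attached to Theorem \ref{3.1}.
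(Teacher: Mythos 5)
Your proposal is correct and follows essentially the route the paper intends: the paper derives Corollary \ref{3.2} directly from Theorem \ref{3.1} by invoking the standard variational property of epi-convergence (stability under the continuous perturbation $-2\int_\Omega f\cdot u\,dx$, compactness of the minimizers from Lemma \ref{1.1}, uniqueness of the limit minimizer, convergence of energies), exactly as you lay out. The ``delicate point'' you flag --- upgrading the weak-$*$ measure limit of $R^{\varepsilon}(u^{\varepsilon})$ to an element of $V$ under the energy bound --- is precisely the content of Lemma \ref{3.5} inside the proof of Theorem \ref{3.1}, so it is already settled and your argument closes.
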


\begin{remark}
\label{3.3}In the expression of the limit functional $F^{o}$, the term $\pi
E_{o}\int_{\Omega }\left( e_{33}\left( v\right) \right) ^{2}dx$ can be
interpreted as the \textquotedblright pure influence\textquotedblright\ of
the fibers, due to their longitudinal repartition, on the asymptotic
behaviour. The term $2\pi \gamma \int_{\Omega }\left( v-u\right) ^{t}A\left(
v-u\right) dx$ can be interpreted as the mixed influence of the fibers and
of the elastic material (for example, shearing effect of the fibers on the
material, for the term $2\pi \gamma \mu \int_{\Omega }\left(
v_{3}-u_{3}\right) ^{2}dx$).
\end{remark}

\begin{proof}[\textbf{of Theorem \protect\ref{3.1}}]
This proof will be decomposed in two main parts, corresponding to the
verification of the two assertions of the epi-convergence.\ As a first step,
let us verify : \textit{For every }$u$\textit{\ in }$H_{\Gamma
_{1}}^{1}\left( \Omega ,\mathbf{R}^{3}\right) $\textit{\ and for every }$v$%
\textit{\ in }$V$\textit{, there exists a sequence }$\left( u_{\varepsilon
}^{o}\right) _{\varepsilon }$\textit{\ of elements of }$H_{\Gamma
_{1}}^{1}\left( \Omega ,\mathbf{R}^{3}\right) $\textit{\ converging to }$%
\left( u,v\right) $\textit{\ in the topology }$\tau $\textit{\ and such that
: }$\lim \sup_{\varepsilon \rightarrow 0}F^{\varepsilon }\left(
u_{\varepsilon }^{o}\right) \leq F^{o}\left( u,v\right) .$

Let us first choose any element $u$ of $C^{1}\left( \overline{\Omega },%
\mathbf{R}^{3}\right) \cap H_{\Gamma _{1}}^{1}\left( \Omega ,\mathbf{R}%
^{3}\right) $ and any element $v$ of $C^{2}\left( \overline{\Omega },\mathbf{%
R}^{3}\right) \cap V$. For every $k=\left( k_{1},k_{2}\right) $, we define
the function $\mathcal{R}_{\varepsilon }\left( v\right) $ in $B_{\varepsilon
}^{k}$ by its three components as follows: 
\[
\left\{ 
\begin{array}{rcl}
\left( \mathcal{R}_{\varepsilon }\left( v\right) \right) _{\alpha }\left(
x_{1},x_{2},x_{3}\right)  & = & v_{\alpha }\left( k_{1}\varepsilon
,k_{2}\varepsilon ,x_{3}\right)  \\ 
&  & \qquad -\dfrac{\lambda ^{\varepsilon }}{2\left( \mu ^{\varepsilon
}+\lambda ^{\varepsilon }\right) }\left( x_{\alpha }-k_{\alpha }\varepsilon
\right) \dfrac{\partial v_{3}}{\partial x_{3}}\left( k_{1}\varepsilon
,k_{2}\varepsilon ,x_{3}\right)  \\ 
\left( \mathcal{R}_{\varepsilon }\left( v\right) \right) _{3}\left(
x_{1},x_{2},x_{3}\right)  & = & v_{3}\left( k_{1}\varepsilon
,k_{2}\varepsilon ,x_{3}\right) -\left( x_{1}-k_{1}\varepsilon \right) 
\dfrac{\partial v_{1}}{\partial x_{3}}\left( k_{1}\varepsilon
,k_{2}\varepsilon ,x_{3}\right)  \\ 
&  & \qquad -\left( x_{2}-k_{2}\varepsilon \right) \dfrac{\partial v_{2}}{%
\partial x_{3}}\left( k_{1}\varepsilon ,k_{2}\varepsilon ,x_{3}\right) .%
\end{array}%
\right. 
\]
\end{proof}

Let us choose some smooth function $\psi _{\varepsilon }$ identically equal
to 1 (resp. to 0) in $\Omega \setminus \overline{\Sigma _{2\varepsilon }}$
(resp.\ in $\Sigma _{\varepsilon }$), with : $\Sigma _{\varepsilon }=\left\{
x\in \Omega \mid d\left( x,\Gamma _{1}\right) <\varepsilon \right\} $. We
define: 
\begin{equation}
\begin{array}{ccl}
u_{\varepsilon }^{o} & = & \left( 1-\psi _{\varepsilon }\right) u+\psi
_{\varepsilon }\left( \left( e_{m}-z_{\varepsilon }^{m}\right)
u_{m}+z_{\varepsilon }^{m}\left( \mathcal{R}_{\varepsilon }\left( v\right)
\right) _{m}\right)  \\ 
& = & u-\psi _{\varepsilon }z_{\varepsilon }^{m}\left( u_{m}-\left( \mathcal{%
R}_{\varepsilon }\left( v\right) \right) _{m}\right) ,%
\end{array}
\label{uoeps}
\end{equation}%
where $u_{m}$ and $\left( \mathcal{R}_{\varepsilon }\left( v\right) \right)
_{m}$ are the $m$-th components of $u$ and $\mathcal{R}_{\varepsilon }\left(
v\right) $ in the canonical basis $\left( e_{m}\right) _{m=1,2,3}$ of $%
\mathbf{R}^{3}$ and $z_{\varepsilon }^{m}$ is defined in (\ref{zepsm}). One
has the following estimates.

\begin{lemma}
\label{3.4}

\begin{enumerate}
\item There exists some positive constant $C$ independant of $\varepsilon $
such that%
\[
\begin{array}{rcll}
\left\vert u_{\varepsilon }^{o}\right\vert \left( x\right)  & \leq  & C & 
\forall x\in \Omega  \\ 
\left\vert \nabla \mathcal{R}_{\varepsilon }\left( v\right) \right\vert
\left( x\right)  & \leq  & C & \forall x\in B_{\varepsilon } \\ 
\left\vert \mathcal{R}_{\varepsilon }\left( v\right) -v\right\vert \left(
x\right)  & \leq  & Cr_{\varepsilon } & \forall x\in T_{\varepsilon } \\ 
\left\vert \mathcal{R}_{\varepsilon }\left( v\right) -v\right\vert \left(
x\right)  & \leq  & Cs_{\varepsilon } & \forall x\in B_{\varepsilon }.%
\end{array}%
\]

\item $u_{\varepsilon }^{o}$ belongs to $H_{\Gamma _{1}}^{1}\left( \Omega ,%
\mathbf{R}^{3}\right) $, $\left( u_{\varepsilon }^{o}\right) _{\varepsilon }$
converges to $\left( u,v\right) $ in the above defined topology $\tau $.
\end{enumerate}
\end{lemma}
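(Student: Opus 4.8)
The plan is to prove the two assertions separately, in each case exploiting the explicit form of $\mathcal{R}_\varepsilon(v)$ together with Lemmas \ref{2.2} and \ref{2.3}. The pointwise estimates of assertion 1 reduce to the algebraic structure of $\mathcal{R}_\varepsilon(v)$ and the smoothness of $u$ and $v$. First I would note that $\lambda^\varepsilon/(2(\mu^\varepsilon+\lambda^\varepsilon))\le 1/2$ uniformly in $\varepsilon$ (since $\mu^\varepsilon>0$), so that each component of $\mathcal{R}_\varepsilon(v)$ is, up to bounded $\varepsilon$-independent factors, a value of $v$ or of $\partial v/\partial x_3$ at $(k_1\varepsilon,k_2\varepsilon,x_3)$ multiplied by a factor $x_\alpha-k_\alpha\varepsilon$ of size at most $R_\varepsilon^k$. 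Differentiating this polynomial-in-$x$ expression and using $v\in C^2$ gives $|\nabla\mathcal{R}_\varepsilon(v)|\le C$ on $B_\varepsilon$, while a first-order Taylor expansion of $v$ about $(k_1\varepsilon,k_2\varepsilon,x_3)$ gives $|\mathcal{R}_\varepsilon(v)-v|\le C R_\varepsilon^k$, which is $\le Cr_\varepsilon$ on $T_\varepsilon$ and $\le Cs_\varepsilon$ on $B_\varepsilon$. The bound $|u_\varepsilon^o|\le C$ then follows from the second form of $u_\varepsilon^o$ in (\ref{uoeps}): $u$ is bounded, $0\le\psi_\varepsilon\le 1$, the components of $\mathcal{R}_\varepsilon(v)$ are bounded, and $z_\varepsilon^m$ is bounded because on $T_\varepsilon$ it equals $e_m$ while on $C_\varepsilon^k$ Lemma \ref{2.2}(1) together with $\ln^2(R_\varepsilon^k)\le\ln^2(r_\varepsilon)$ (valid since $r_\varepsilon\le R_\varepsilon^k\le s_\varepsilon<1$) yields $|e_m-w_\varepsilon^{mk}|\le C$.

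For assertion 2 I would first check $u_\varepsilon^o\in H_{\Gamma_1}^1(\Omega,\mathbf{R}^3)$. On each $B_\varepsilon^k$ the function $\mathcal{R}_\varepsilon(v)$ is smooth and $z_\varepsilon^m\in H^1$, so the product $\psi_\varepsilon z_\varepsilon^m(u_m-(\mathcal{R}_\varepsilon(v))_m)$ is $H^1$ there; since $\varphi_\varepsilon^k$, and hence $z_\varepsilon^m$, vanishes on $\partial B_\varepsilon^k=S_{s_\varepsilon}$ and $z_\varepsilon^m\equiv 0$ outside $\overline{B_\varepsilon}$, there is no jump across the cylinder boundaries and $u_\varepsilon^o\in H^1(\Omega,\mathbf{R}^3)$. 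The Dirichlet condition is automatic: $\psi_\varepsilon\equiv 0$ on $\Sigma_\varepsilon$, so $u_\varepsilon^o=u$ in a neighbourhood of $\Gamma_1$ and $u=0$ on $\Gamma_1$.

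It then remains to establish $\tau$-convergence. For the strong $L^2$ part I would use the $L^\infty$ bounds of assertion 1 to get $\int_\Omega|u_\varepsilon^o-u|^2\,dx=\int_{B_\varepsilon}|\psi_\varepsilon z_\varepsilon^m(u_m-(\mathcal{R}_\varepsilon(v))_m)|^2\,dx\le C|B_\varepsilon|\to 0$, since $|B_\varepsilon|\sim\pi L|\omega|(s_\varepsilon/\varepsilon)^2\to 0$; hence $u_\varepsilon^o\to u$ strongly in $L^2$. Weak $H^1$ convergence to $u$ then follows once boundedness of $(u_\varepsilon^o)_\varepsilon$ in $H^1$ is known. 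Expanding $\nabla u_\varepsilon^o$ from (\ref{uoeps}), the term $\psi_\varepsilon(\nabla z_\varepsilon^m)(u_m-(\mathcal{R}_\varepsilon(v))_m)$ is controlled in $L^2$ by $\int_{B_\varepsilon}|\nabla z_\varepsilon^m|^2\,dx$, which is bounded by Korn's inequality and Lemma \ref{2.3}(3), while $\psi_\varepsilon z_\varepsilon^m\nabla(u_m-(\mathcal{R}_\varepsilon(v))_m)$ is bounded pointwise and supported on $B_\varepsilon$, hence small in $L^2$. The delicate term, which I expect to be the main obstacle, is the cut-off contribution $(\nabla\psi_\varepsilon)z_\varepsilon^m(u_m-(\mathcal{R}_\varepsilon(v))_m)$: with $|\nabla\psi_\varepsilon|\sim\varepsilon^{-1}$ supported in the boundary layer $\Sigma_{2\varepsilon}\setminus\Sigma_\varepsilon$ of thickness $\varepsilon$, and bounded integrand, its squared $L^2$-norm is of order $\varepsilon^{-2}|B_\varepsilon\cap\Sigma_{2\varepsilon}|\sim s_\varepsilon^2/\varepsilon^3$. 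This stays bounded (and can be made to vanish) through the choice of $s_\varepsilon$, which is admissible because under the hypothesis that $\gamma$ is finite one has $r_\varepsilon$ exponentially small, leaving room to fit $s_\varepsilon$ between $r_\varepsilon$ and $\varepsilon$ with $s_\varepsilon^2/\varepsilon^3\to 0$ in addition to the three stated constraints.

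Finally, for the rescaled convergence I would use that on $T_\varepsilon$ (where $z_\varepsilon^m=e_m$) one has $u_\varepsilon^o=(1-\psi_\varepsilon)u+\psi_\varepsilon\mathcal{R}_\varepsilon(v)$. The $(1-\psi_\varepsilon)u$ piece lives only in $T_\varepsilon\cap\Sigma_{2\varepsilon}$, whose rescaled mass $(|\Omega|/|T_\varepsilon|)|T_\varepsilon\cap\Sigma_{2\varepsilon}|\sim\varepsilon\to 0$, so it is negligible, and $\mathcal{R}_\varepsilon(v)=v+O(r_\varepsilon)$ on $T_\varepsilon$ by assertion 1. Thus $\int_\Omega R^\varepsilon(u_\varepsilon^o)\varphi\,dx$ reduces to $(|\Omega|/|T_\varepsilon|)\int_{T_\varepsilon}v\varphi\,dx+o(1)$, and a Riemann-sum argument over the $\varepsilon$-periodic fibers, using $|\Omega|=|\omega|L$ and $|T_\varepsilon|\sim(|\omega|/\varepsilon^2)\pi r_\varepsilon^2 L$, gives $(|\Omega|/|T_\varepsilon|)\int_{T_\varepsilon}g\,dx\to\int_\Omega g\,dx$ for continuous $g$; applied to $g=v\varphi$ this yields $\int_\Omega R^\varepsilon(u_\varepsilon^o)\varphi\,dx\to\int_\Omega v\varphi\,dx$ for every $\varphi\in C_0^0(\mathbf{R}^3)$, which completes the verification of $\tau$-convergence.
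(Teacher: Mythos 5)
Your proof is correct and follows essentially the same route as the paper: pointwise bounds from the explicit form of $\mathcal{R}_{\varepsilon }\left( v\right) $ and a Taylor expansion at $\left( k_{1}\varepsilon ,k_{2}\varepsilon ,x_{3}\right) $, a splitting of $\int_{\Omega }\left\vert \nabla u_{\varepsilon }^{o}\right\vert ^{2}dx$ over $B_{\varepsilon }$ and its complement using Lemmas \ref{2.2} and \ref{2.3}, and a Riemann-sum argument over the fibers for the rescaled convergence. You are in fact more careful than the paper on two points it passes over silently, namely the cut-off term $\left( \nabla \psi _{\varepsilon }\right) z_{\varepsilon }^{m}\left( u_{m}-\left( \mathcal{R}_{\varepsilon }\left( v\right) \right) _{m}\right) $, which does require the admissible extra constraint $s_{\varepsilon }^{2}/\varepsilon ^{3}\rightarrow 0$ that you impose, and the boundary-layer piece $\left( 1-\psi _{\varepsilon }\right) u$ inside $T_{\varepsilon }$.
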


\begin{proof}
\textit{1}. Because $v$ belongs to $L^{\infty }\left( \Omega ,\mathbf{R}%
^{3}\right) $, together with its first order derivatives, we get, in every $%
B_{\varepsilon }^{k}$ : $\left\vert \mathcal{R}_{\varepsilon }\left(
v\right) \right\vert \leq C$ and $\left\vert \nabla \mathcal{R}_{\varepsilon
}\left( v\right) \right\vert \leq C^{\prime }$, where $C$ and $C^{\prime }$
are positive constants. Using Lemma \ref{2.2}, we get : $\left\vert
u_{\varepsilon }^{o}\right\vert \leq C$, in $\Omega $. One has, for every $%
k=\left( k_{1},k_{2}\right) $ 
\[
\begin{array}{ccl}
\left\vert \left( \mathcal{R}_{\varepsilon }\left( v\right) -v\right)
_{\alpha }\right\vert _{\mid T_{\varepsilon }^{k}} & \leq  & \left\vert
v_{\alpha }\left( k_{1}\varepsilon ,k_{2}\varepsilon ,x_{3}\right)
-v_{\alpha }\left( x_{1},x_{2},x_{3}\right) \right\vert  \\ 
&  & \qquad +\dfrac{\lambda ^{\varepsilon }}{2\left( \mu ^{\varepsilon
}+\lambda ^{\varepsilon }\right) }\left\vert \left( x_{\alpha }-k_{\alpha
}\varepsilon \right) \dfrac{\partial v_{3}}{\partial x_{3}}\left(
k_{1}\varepsilon ,k_{2}\varepsilon ,x_{3}\right) \right\vert  \\ 
& \leq  & Cr_{\varepsilon },%
\end{array}%
\]%
because $v$ belongs to $C^{1}\left( \overline{\Omega },\mathbf{R}^{3}\right) 
$ and using the hypotheses on $\lambda ^{\varepsilon }$ and $\mu
^{\varepsilon }$. Similarly, we have : $\left\vert \left( \mathcal{R}%
_{\varepsilon }\left( v\right) -v\right) _{3}\right\vert _{\mid
T_{\varepsilon }^{k}}\leq Cr_{\varepsilon }$, and : $\left\vert \mathcal{R}%
_{\varepsilon }\left( v\right) -v\right\vert _{\mid B_{\varepsilon
}^{k}}\leq Cs_{\varepsilon }$, for every $k$.

\textit{2}. Observe that $u_{\varepsilon }^{o}$ belongs to $H_{\Gamma
_{1}}^{1}\left( \Omega ,\mathbf{R}^{3}\right) $ because $u$ vanishes on $%
\Gamma _{1}$ and $\psi _{\varepsilon }$ also vanishes on $\Gamma _{1}$.
Furthermore, there exists some constant $C_{m}$ such that one has in $%
B_{\varepsilon }$ 
\begin{equation}
\begin{array}{ccl}
\left\vert \nabla u_{\varepsilon }^{o}\right\vert  & \leq  & \left\vert
\nabla u_{m}\left( e_{m}-z_{\varepsilon }^{m}\right) +z_{\varepsilon
}^{m}\nabla \left( \mathcal{R}_{\varepsilon }\left( v\right) \right)
_{m}+\left( \left( \mathcal{R}_{\varepsilon }\left( v\right) \right)
_{m}-u_{m}\right) \nabla z_{\varepsilon }^{m}\right\vert  \\ 
& \leq  & C_{m}\left( \left\vert \nabla u_{m}\right\vert +\varepsilon
\left\vert \nabla z_{\varepsilon }^{m}\right\vert +\left\vert \nabla
z_{\varepsilon }^{m}\right\vert \left\vert v_{m}-u_{m}\right\vert \right) ,%
\end{array}
\label{estim2}
\end{equation}%
for some constant $C_{m}$, thanks to the preceding estimates. We then
compute 
\begin{equation}
\dint_{\Omega }\left\vert \nabla u_{\varepsilon }^{o}\right\vert
^{2}dx=\dint_{\Omega \backslash \overline{B_{\varepsilon }}}\left\vert
\nabla u_{\varepsilon }^{o}\right\vert ^{2}dx+\dint_{B_{\varepsilon
}}\left\vert \nabla u_{\varepsilon }^{o}\right\vert ^{2}dx  \label{estim3}
\end{equation}

Thanks to (\ref{estim2}) and to Lemma \ref{2.3} one has 
\[
\begin{array}{ccl}
\dint_{B_{\varepsilon }}\left\vert \nabla u_{\varepsilon }^{o}\right\vert
^{2}dx & \leq  & C_{m}^{\prime }\left( \dint_{B_{\varepsilon }}\left\vert
\nabla u_{m}\right\vert ^{2}dx+\varepsilon \dint_{B_{\varepsilon
}}\left\vert \nabla z_{\varepsilon }^{m}\right\vert
^{2}dx+\dint_{B_{\varepsilon }}\left\vert v_{m}-u_{m}\right\vert
^{2}\left\vert \nabla z_{\varepsilon }^{m}\right\vert ^{2}dx\right)  \\ 
& \leq  & C,%
\end{array}%
\]%
where $C$ is some positive constant independant of $\varepsilon $.
Furthermore, because $z_{\varepsilon }^{m}$ outside $B_{\varepsilon }$ 
\[
\dint_{\Omega \backslash \overline{B_{\varepsilon }}}\left\vert \nabla
u_{\varepsilon }^{o}\right\vert ^{2}dx\underset{\varepsilon \rightarrow 0}{%
\rightarrow }\dint\nolimits_{\Omega }\left\vert \nabla u\right\vert ^{2}dx.
\]

This proves that $\left( u_{\varepsilon }^{o}\right) _{\varepsilon }$
converges to $u$ in the weak topology of $H^{1}\left( \Omega ,\mathbf{R}%
^{3}\right) $. Let $\varphi $ be any element of $C_{0}^{1}\left( \mathbf{R}%
^{3},\mathbf{R}^{3}\right) $. We have, because : $\left( z_{\varepsilon
}^{m}\right) _{\mid T_{\varepsilon }}=e_{m}$%
\[
\begin{array}{ccl}
\dint\nolimits_{\Omega }\varphi R^{\varepsilon }\left( u_{\varepsilon
}^{o}\right) dx & = & \dfrac{\left\vert \Omega \right\vert }{\left\vert
T_{\varepsilon }\right\vert }\dint\nolimits_{T_{\varepsilon }}\varphi
u_{\varepsilon }^{o}dx \\ 
& = & \dfrac{\left\vert \Omega \right\vert }{\left\vert T_{\varepsilon
}\right\vert }\dint\nolimits_{T_{\varepsilon }}\varphi \mathcal{R}%
_{\varepsilon }\left( v\right) dx \\ 
& = & \dfrac{\left\vert \Omega \right\vert \left\vert T_{\varepsilon
}^{k}\cap \omega \right\vert }{\left\vert T_{\varepsilon }\right\vert
\varepsilon ^{2}}\dsum\limits_{k}\varepsilon ^{2}\dint_{0}^{L}\varphi \left(
k_{1}\varepsilon ,k_{2}\varepsilon ,x_{3}\right) v\left( k_{1}\varepsilon
,k_{2}\varepsilon ,x_{3}\right) dx_{3}+o_{\varepsilon },%
\end{array}%
\]%
$\varphi $ and $v$ being continuously differentiable and $\left\vert
T_{\varepsilon }^{k}\cap \omega \right\vert $ being independant of $k$. We
have, thanks to the smoothness of $\varphi $ and $v$ 
\[
\underset{\varepsilon \rightarrow 0}{\lim }\dsum\limits_{k}\varepsilon
^{2}\dint_{0}^{L}\varphi \left( k_{1}\varepsilon ,k_{2}\varepsilon
,x_{3}\right) v\left( k_{1}\varepsilon ,k_{2}\varepsilon ,x_{3}\right)
dx_{3}=\dint_{\Omega }\varphi vdx
\]%
and we observe that : $\lim_{\varepsilon \rightarrow 0}\left( \left\vert
\Omega \right\vert \left\vert T_{\varepsilon }^{k}\cap \omega \right\vert
\right) /\left( \left\vert T_{\varepsilon }\right\vert \varepsilon
^{2}\right) =1$. This proves that the sequence $\left( u_{\varepsilon
}^{o}\right) _{\varepsilon }$ converges to $\left( u,v\right) $ in the above
defined topology $\tau $.\qquad $\square $
\end{proof}

For every $u$ in $C^{1}\left( \overline{\Omega },\mathbf{R}^{3}\right) \cap
H_{\Gamma _{1}}^{1}\left( \Omega ,\mathbf{R}^{3}\right) $ and every $v$ in $%
C^{1}\left( \overline{\Omega },\mathbf{R}^{3}\right) $, we compute 
\begin{equation}
\begin{array}{r}
F^{\varepsilon }\left( u_{\varepsilon }^{o}\right) =\dint_{\Omega \setminus 
\overline{C_{\varepsilon }\cup T_{\varepsilon }}}\sigma _{ij}\left( u\right)
e_{ij}\left( u\right) dx+\dint_{C_{\varepsilon }}\sigma _{ij}\left(
u_{\varepsilon }^{o}\right) e_{ij}\left( u_{\varepsilon }^{o}\right)
dx\qquad  \\ 
+\dint_{T_{\varepsilon }}\sigma _{ij}^{\varepsilon }\left( \mathcal{R}%
_{\varepsilon }\left( v\right) \right) e_{ij}\left( \mathcal{R}_{\varepsilon
}\left( v\right) \right) dx.%
\end{array}
\label{Fepsuoe}
\end{equation}

Because the characteristic function of $\Omega \backslash \overline{%
C_\varepsilon \cup T_\varepsilon }$ converges to 1 in the strong topology of 
$L^2\left( \Omega \right) $, the first integral of (\ref{Fepsuoe})
immediately leads to 
\begin{equation}  \label{calc3}
\underset{\varepsilon \rightarrow 0}{\lim }\int_{\Omega \setminus \overline{%
C_\varepsilon \cup T_\varepsilon }}\sigma _{ij}\left( u\right) e_{ij}\left(
u\right) dx=\int_\Omega \sigma _{ij}\left( u\right) e_{ij}\left( u\right) dx.
\end{equation}

Let us study the second integral of (\ref{Fepsuoe}). One has, using the
definition (\ref{uoeps}) of the test-function $u_\varepsilon ^o$ 
\begin{equation}  \label{int1}
\begin{array}{l}
\dint_{C_\varepsilon }\sigma _{ij}\left( u_\varepsilon ^o\right)
e_{ij}\left( u_\varepsilon ^o\right) dx \\ 
\qquad =\dint_{C_\varepsilon }\sigma _{ij}\left( u\right) e_{ij}\left(
u\right) dx+2\dint_{C_\varepsilon }\sigma _{ij}\left( u\right) e_{ij}\left(
z_\varepsilon ^m\left( \left( \mathcal{R}_\varepsilon \left( v\right)
\right) _m-u_m\right) \right) dx\qquad \\ 
\hfill+\dint_{C_\varepsilon }\sigma _{ij}\left( z_\varepsilon ^m\left(
\left( \mathcal{R}_\varepsilon \left( v\right) \right) _m-u_m\right) \right)
e_{ij}\left( z_\varepsilon ^l\left( \left( \mathcal{R}_\varepsilon \left(
v\right) \right) _l-u_l\right) \right) dx.%
\end{array}%
\end{equation}

The second integral of the right hand side of (\ref{int1}) converges to 0,
because $\left( z_{\varepsilon }^{m}\right) _{\varepsilon }$ converges to $0$
in the weak topology of $H_{\Gamma _{1}}^{1}\left( \Omega ,\mathbf{R}%
^{3}\right) $ and thanks to the estimates of Lemma \ref{3.3}. The third
integral of this right hand side of (\ref{int1}) can be computed as 
\begin{equation}
\begin{array}{l}
\dint_{C_{\varepsilon }}\sigma _{ij}\left( z_{\varepsilon }^{m}\left(
v_{m}-u_{m}\right) \right) e_{ij}\left( z_{\varepsilon }^{l}\left(
v_{l}-u_{l}\right) \right) dx \\ 
\qquad +2\dint_{C_{\varepsilon }}\sigma _{ij}\left( z_{\varepsilon
}^{m}\left( \left( \mathcal{R}_{\varepsilon }\left( v\right) \right)
_{m}-u_{m}\right) \right) e_{ij}\left( z_{\varepsilon }^{l}\left(
v_{l}-u_{l}\right) \right) dx \\ 
\qquad +\dint_{C_{\varepsilon }}\sigma _{ij}\left( z_{\varepsilon
}^{m}\left( \left( \mathcal{R}_{\varepsilon }\left( v\right) \right)
_{m}-v_{m}\right) \right) e_{ij}\left( z_{\varepsilon }^{l}\left( \left( 
\mathcal{R}_{\varepsilon }\left( v\right) \right) _{l}-v_{l}\right) \right)
dx.%
\end{array}
\label{int2}
\end{equation}

Thanks to Lemmas \ref{2.3} and \ref{3.4}, the two last integrals of (\ref%
{int2}) converge to $0$ and the first integral of (\ref{int2}) is equal to 
\[
\dint\nolimits_{\Omega }\sigma _{ij}\left( z_{\varepsilon }^{m}\right)
e_{ij}\left( z_{\varepsilon }^{l}\right) \left( v_{m}-u_{m}\right) \left(
v_{l}-u_{l}\right) dx+o_{\varepsilon },
\]%
with $\lim_{\varepsilon \rightarrow 0}o_{\varepsilon }=0$, because $\left(
z_{\varepsilon }^{m}\right) _{\varepsilon }$ converges to $0$ in the weak
topology of $H_{\Gamma _{1}}^{1}\left( \Omega ,\mathbf{R}^{3}\right) $. One
deduces from Lemma \ref{2.3} and the smoothness of $u$ and $v$ that 
\begin{equation}
\underset{\varepsilon \rightarrow 0}{\lim }\dint\nolimits_{\Omega }\sigma
_{ij}\left( z_{\varepsilon }^{m}\right) e_{ij}\left( z_{\varepsilon
}^{l}\right) \left( v_{m}-u_{m}\right) \left( v_{l}-u_{l}\right) dx=2\pi
\gamma \dint_{\Omega }\left( v-u\right) ^{t}A\left( v-u\right) dx.
\label{calc1}
\end{equation}

In order to study the third integral of (\ref{Fepsuoe}), one observes that
the above expression of $\mathcal{R}_\varepsilon \left( v\right) $ implies

\[
\begin{array}{rcl}
Tr\left( e\left( \mathcal{R}_\varepsilon \left( v\right) \right) \right) & =
& \dfrac{\mu ^\varepsilon }{\mu ^\varepsilon +\lambda ^\varepsilon }\dfrac{%
\partial v_3}{\partial x_3}\left( k_1\varepsilon ,k_2\varepsilon ,x_3\right)
-\left( x_\alpha -k_\alpha \varepsilon \right) \dfrac{\partial ^2v_\alpha }{%
\partial x_3^2}\left( k_1\varepsilon ,k_2\varepsilon ,x_3\right) \\ 
\sigma _{11}^\varepsilon \left( \mathcal{R}_\varepsilon \left( v\right)
\right) & = & -\lambda ^\varepsilon \left( x_\alpha -k_\alpha \varepsilon
\right) \dfrac{\partial ^2v_\alpha }{\partial x_3^2}\left( k_1\varepsilon
,k_2\varepsilon ,x_3\right) \\ 
\sigma _{22}^\varepsilon \left( \mathcal{R}_\varepsilon \left( v\right)
\right) & = & -\lambda ^\varepsilon \left( x_\alpha -k_\alpha \varepsilon
\right) \dfrac{\partial ^2v_\alpha }{\partial x_3^2}\left( k_1\varepsilon
,k_2\varepsilon ,x_3\right) \\ 
\sigma _{12}^\varepsilon \left( \mathcal{R}_\varepsilon \left( v\right)
\right) & = & 0 \\ 
\sigma _{33}^\varepsilon \left( \mathcal{R}_\varepsilon \left( v\right)
\right) & = & \mu ^\varepsilon \dfrac{2\mu ^\varepsilon +3\lambda
^\varepsilon }{\mu ^\varepsilon +\lambda ^\varepsilon }\dfrac{\partial v_3}{%
\partial x_3}\left( k_1\varepsilon ,k_2\varepsilon ,x_3\right) \\ 
&  & \qquad -\left( 2\mu ^\varepsilon +\lambda ^\varepsilon \right) \left(
x_\alpha -k_\alpha \varepsilon \right) \dfrac{\partial ^2v_\alpha }{\partial
x_3^2}\left( k_1\varepsilon ,k_2\varepsilon ,x_3\right) \\ 
\sigma _{\alpha 3}^\varepsilon \left( \mathcal{R}_\varepsilon \left(
v\right) \right) & = & -\mu ^\varepsilon \left( x_\alpha -k_\alpha
\varepsilon \right) \dfrac{\partial ^2v_\alpha }{\partial x_3^2}\left(
k_1\varepsilon ,k_2\varepsilon ,x_3\right) .%
\end{array}
\]

One easily proves that all the terms of the third integral of (\ref{Fepsuoe}%
) converge to 0 except the following one 
\[
\begin{array}{l}
\dint_{T_\varepsilon }\sigma _{33}^\varepsilon \left( \mathcal{R}%
_\varepsilon \left( v\right) \right) e_{33}\left( \mathcal{R}_\varepsilon
\left( v\right) \right) dx \\ 
\qquad 
\begin{array}{cl}
= & \dfrac{\pi \mu ^\varepsilon \left( r_\varepsilon \right) ^2}{\varepsilon
^2}\dfrac{2\mu ^\varepsilon +3\lambda ^\varepsilon }{\mu ^\varepsilon
+\lambda ^\varepsilon }\dsum\limits_k\varepsilon ^2\dint\nolimits_0^L\left( 
\dfrac{\partial v_3}{\partial x_3}\right) ^2\left( k_1\varepsilon
,k_2\varepsilon ,x_3\right) dx_3+o_\varepsilon \\ 
\underset{\varepsilon \rightarrow 0}{\rightarrow } & \pi E_o\dint_\Omega
\left( e_{33}\left( v\right) \right) ^2dx,%
\end{array}%
\end{array}
\]
with the above definition of $E_o$. Thus, we get, for this third integral of
(\ref{Fepsuoe}) 
\begin{equation}  \label{calc2}
\underset{\varepsilon \rightarrow 0}{\lim }\dint\nolimits_{T_\varepsilon
}\sigma _{ij}^\varepsilon \left( \mathcal{R}_\varepsilon \left( v\right)
\right) e_{ij}\left( \mathcal{R}_\varepsilon \left( v\right) \right) dx=\pi
E_o\dint_\Omega \left( e_{33}\left( v\right) \right) ^2dx.
\end{equation}

From (\ref{calc3}), (\ref{calc1}) and (\ref{calc2}), we thus derive : $\lim
_{\varepsilon \rightarrow 0}F^\varepsilon \left( u_\varepsilon ^o\right)
=F^o\left( u,v\right) .$

We conclude the verification of this first assertion, using a density
argument and the diagonalization argument contained in \cite[Corollary 1.18]%
{Att}. Indeed, for every $u$ in $H_{\Gamma _{1}}^{1}\left( \Omega ,\mathbf{R}%
^{3}\right) ,$ there exists a sequence $\left( u^{n},v^{n}\right) _{n}$ in $%
\left( C^{1}\left( \overline{\Omega },\mathbf{R}^{3}\right) \cap H_{\Gamma
_{1}}^{1}\left( \Omega ,\mathbf{R}^{3}\right) \right) \times \left(
C^{2}\left( \overline{\Omega },\mathbf{R}^{3}\right) \cap V\right) $
converging to $\left( u,v\right) $ in the strong topology of the space $%
H^{1}\left( \Omega ,\mathbf{R}^{3}\right) \times V$. Thanks to Lemma \ref%
{3.4}, $\left( \left( u^{n}\right) _{\varepsilon }^{o}\right) _{\varepsilon }
$ converges to $\left( u^{n},v^{n}\right) $ in the topology $\tau $ and 
\[
\underset{n\rightarrow +\infty }{\lim }\ \underset{\varepsilon \rightarrow 0}%
{\lim }F^{\varepsilon }\left( \left( u^{n}\right) _{\varepsilon }^{o}\right)
=\ \underset{n\rightarrow +\infty }{\lim }F^{o}\left( u^{n},v^{n}\right)
=F^{o}\left( u,v\right) .
\]

The space $H^{1}\left( \Omega ,\mathbf{R}^{3}\right) \times L^{1}\left(
\Omega ,\mathbf{R}^{3}\right) $ is metrizable for the topology $\tau $. One
deduces from \cite[Corollary 1.18]{Att}, the existence of a subsequence $%
\left( \left( u^{n\left( \varepsilon \right) }\right) _{\varepsilon
}^{o}\right) _{\varepsilon }$ converging to $u$ in the weak topology of $%
H_{\Gamma _{1}}^{1}\left( \Omega ,\mathbf{R}^{3}\right) $, such that $\left(
R^{\varepsilon }\left( v^{n\left( \varepsilon \right) }\right) \right)
_{\varepsilon }$ converges to $v$ in the weak$^{\ast }$ topology of $%
L^{1}\left( \Omega ,\mathbf{R}^{3}\right) $ and : $\lim \sup_{\varepsilon
\rightarrow 0}F^{\varepsilon }\left( \left( u^{n\left( \varepsilon \right)
}\right) _{\varepsilon }^{o}\right) \leq F^{o}\left( u,v\right) $. This ends
the verification of the first assertion.

Let us now prove the second assertion of the epi-convergence, that is : 
\textit{For every sequence }$\left( u_{\varepsilon }\right) _{\varepsilon }$%
\textit{\ of elements of }$H_{\Gamma _{1}}^{1}\left( \Omega ,\mathbf{R}%
^{3}\right) $\textit{, converging to }$\left( u,v\right) $\textit{\ in the
topology }$\tau $\textit{, then }$v$\textit{\ belongs to }$V$\textit{,
satisfies : }$v=0$\textit{, on }$\Gamma _{1}$\textit{, and : }$\lim
\inf_{\varepsilon \rightarrow 0}F^{\varepsilon }\left( u_{\varepsilon
}\right) \geq F^{o}\left( u,v\right) $.

Let $\left( u^{n}\right) _{n}$ be any sequence of smooth functions in $%
C^{1}\left( \overline{\Omega },\mathbf{R}^{3}\right) \cap H_{\Gamma
_{1}}^{1}\left( \Omega ,\mathbf{R}^{3}\right) $ converging to $u$ in the
strong topology of $H^{1}\left( \Omega ,\mathbf{R}^{3}\right) $ and $\left(
v^{n}\right) _{n}$ be any sequence of smooth functions in $C^{2}\left( 
\overline{\Omega },\mathbf{R}^{3}\right) \cap V$ converging to $v$ in the
strong topology of $V$. Let us suppose that $\sup_{\varepsilon
}F^{\varepsilon }\left( u_{\varepsilon }\right) <+\infty $, otherwise the
assertion is trivially satisfied. Under these hypotheses, one proves

\begin{lemma}
\label{3.5}$\left( u_{\varepsilon }\right) _{\varepsilon }$ is bounded in $%
H_{\Gamma _{1}}^{1}\left( \Omega ,\mathbf{R}^{3}\right) $ and the sequence $%
\left( R^{\varepsilon }\left( u_{\varepsilon }\right) \right) _{\varepsilon }
$ converges in the weak$^{\ast }$ topology of $L^{1}\left( \Omega ,\mathbf{R}%
^{3}\right) $ to some $v$ belonging to $V$.
\end{lemma}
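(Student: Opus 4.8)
The plan is to treat the two assertions separately, leaning on the coercivity of $F^{\varepsilon}$ and on the two estimates already at hand (the Korn-type inequality from the proof of Lemma~\ref{1.1} and the trace estimate of Lemma~\ref{1.2}), and to reserve the real work for the membership $v\in V$. \textbf{Boundedness in $H^1$.} First I recall from the proof of Lemma~\ref{1.1} that, since $\lambda^{\varepsilon}\geq 0$, Korn's inequality gives $F^{\varepsilon}(w)\geq C\inf(2\mu,2\mu^{\varepsilon})\int_{\Omega}|\nabla w|^2\,dx$ for every $w\in H^1_{\Gamma_1}(\Omega,\mathbf{R}^3)$. By hypothesis (\ref{hypomu}) one has $\mu^{\varepsilon}\geq c$, so $\inf(2\mu,2\mu^{\varepsilon})\geq\inf(2\mu,2c)>0$ uniformly in $\varepsilon$. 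Combined with the standing assumption $\sup_{\varepsilon}F^{\varepsilon}(u_{\varepsilon})<+\infty$, this bounds $\int_{\Omega}|\nabla u_{\varepsilon}|^2\,dx$ uniformly, and Poincar\'e's inequality (valid because $u_{\varepsilon}=0$ on $\Gamma_1$) yields the bound in $H^1_{\Gamma_1}(\Omega,\mathbf{R}^3)$. This is also a consequence of the weak $H^1$ convergence contained in the $\tau$-convergence, but the energy route is what furnishes the fiber estimates below.

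\textbf{The $L^1$ bound and the weak$^{\ast}$ limit.} Inserting the uniform bound on $\int_{\Omega}|\nabla u_{\varepsilon}|^2\,dx$ into Lemma~\ref{1.2}, and using the standing hypothesis $\sup_{\varepsilon}(-\varepsilon^2\ln r_{\varepsilon})<+\infty$, I get $\sup_{\varepsilon}\big(|T_{\varepsilon}|^{-1}\int_{T_{\varepsilon}}|u_{\varepsilon}|^2\,dx\big)<+\infty$. Exactly as in part~3 of Lemma~\ref{1.1}, Cauchy--Schwarz then gives
\[
\int_{\mathbf{R}^3}\big|R^{\varepsilon}(u_{\varepsilon})\big|\,dx\leq |\Omega|\Big(\frac{1}{|T_{\varepsilon}|}\int_{T_{\varepsilon}}|u_{\varepsilon}|^2\,dx\Big)^{1/2},
\]
so $(R^{\varepsilon}(u_{\varepsilon}))_{\varepsilon}$ is bounded in $L^1(\mathbf{R}^3,\mathbf{R}^3)$. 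A weak$^{\ast}$ limit (tested against $C^0_0$) exists by compactness, and by the definition of the topology $\tau$ it must coincide with $v$; this is the convergence claimed.

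\textbf{Identification $v\in V$.} This is the core. The natural objects are the cross-sectional averages $\bar u_{\varepsilon}^{k}(x_3)=|D_{\varepsilon}^{k}|^{-1}\int_{D_{\varepsilon}^{k}}u_{\varepsilon}\,dx_1dx_2$: testing $R^{\varepsilon}(u_{\varepsilon})$ against $\varphi\in C^0_0$ and using $|\Omega|\,|D_{\varepsilon}^{k}|/|T_{\varepsilon}|\sim\varepsilon^2$ shows that $v$ is the limit of the piecewise-constant extension of $\bar u_{\varepsilon}^{k}$. To obtain $v\in L^2$ I would bound $\sum_k\varepsilon^2\int_0^L|\bar u_{\varepsilon}^{k}|^2\,dx_3\leq |\Omega|\,\big(|T_{\varepsilon}|^{-1}\int_{T_{\varepsilon}}|u_{\varepsilon}|^2\big)$ by Jensen, whence Cauchy--Schwarz against $\varphi$ gives $|\int_{\Omega}v\varphi\,dx|\leq C\|\varphi\|_{L^2}$ and hence $v\in L^2(\Omega,\mathbf{R}^3)$ by Riesz representation. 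For $e_{33}(v)\in L^2$ I use that the fiber part of the energy controls $\int_{T_{\varepsilon}}2\mu^{\varepsilon}e_{33}(u_{\varepsilon})^2\,dx\leq C$; integrating $R^{\varepsilon}(u_{\varepsilon})_3$ by parts in $x_3$ against $\partial\varphi/\partial x_3$ and applying Cauchy--Schwarz produces the factor
\[
\frac{|\Omega|}{|T_{\varepsilon}|}\,\frac{1}{\sqrt{\mu^{\varepsilon}}}\Big(\frac{|T_{\varepsilon}|}{|\Omega|}\Big)^{1/2}\sim\frac{\varepsilon}{r_{\varepsilon}}\,\frac{1}{\sqrt{\mu^{\varepsilon}}},
\]
which stays bounded precisely because $\mu^{\varepsilon}(r_{\varepsilon})^2/\varepsilon^2\to\mu_o$ with $\mu_o>0$ finite; thus $|\int_{\Omega}v_3\,\partial_{x_3}\varphi\,dx|\leq C\|\varphi\|_{L^2}$ and $e_{33}(v)\in L^2(\Omega)$. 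Finally $v_3=0$ on $\Gamma_1$ follows since each $\bar u_{\varepsilon}^{k}$ vanishes at $x_3=0$ (as $u_{\varepsilon}=0$ on $\Gamma_1$) and the uniform $H^1$-in-$x_3$ control of the averaged profiles lets the trace pass to the limit.

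\textbf{Main obstacle.} The delicate point is the membership $e_{33}(v)\in L^2$ together with the boundary trace: the rescaling factor $|\Omega|/|T_{\varepsilon}|\sim\varepsilon^2/r_{\varepsilon}^2$ diverges, so no naive $L^2$-bound on $R^{\varepsilon}(u_{\varepsilon})$ is available, and one must route everything through cross-sectional averages and exploit the exact balance $\mu^{\varepsilon}(r_{\varepsilon})^2/\varepsilon^2\to\mu_o>0$ to cancel the blow-up. Everything else reduces to the earlier lemmas and standard compactness.
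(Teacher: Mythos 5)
Your proposal is correct and its skeleton (coercivity plus Poincar\'e for the $H^{1}$ bound, Lemma \ref{1.2} plus Cauchy--Schwarz for the $L^{1}$ bound, and the scaling balance $\mu ^{\varepsilon }\left( r_{\varepsilon }\right) ^{2}/\varepsilon ^{2}\rightarrow \mu _{o}>0$ as the engine behind $v\in V$) matches the paper's. The one genuinely different ingredient is how you establish the $L^{2}$ memberships. The paper follows Bellieud--Bouchitt\'e's Lemme A1: it sets $\Phi _{\varepsilon }=e_{33}\left( u_{\varepsilon }\right) $, $\delta _{\varepsilon }=\left( \left\vert \Omega \right\vert /\left\vert T_{\varepsilon }\right\vert \right) \mathbf{1}_{T_{\varepsilon }}dx$, shows the measures $\Phi _{\varepsilon }\delta _{\varepsilon }$ have bounded variation (using that $\mu ^{\varepsilon }\left\vert T_{\varepsilon }\right\vert $ has a finite positive limit), extracts a limit measure $\Phi $, and then uses Fenchel's inequality $\int \left\vert \Phi _{\varepsilon }\right\vert ^{2}\delta _{\varepsilon }\geq 2\int \Phi _{\varepsilon }\varphi \delta _{\varepsilon }-\int \varphi ^{2}\delta _{\varepsilon }$ together with Riesz representation to conclude that $\Phi $ has an $L^{2}$ density, identified with $e_{33}\left( v\right) $ by integration by parts; the same scheme with $\Phi _{\varepsilon ,i}=\left( u_{\varepsilon }\right) _{i}$ gives $v\in L^{2}$. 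You instead run a direct duality estimate, $\left\vert \int _{\Omega }v_{3}\,\partial _{x_{3}}\varphi \,dx\right\vert \leq C\left\Vert \varphi \right\Vert _{L^{2}}$, by integrating by parts and tracking the factor $\left( \left\vert \Omega \right\vert /\left\vert T_{\varepsilon }\right\vert \right) ^{1/2}/\sqrt{\mu ^{\varepsilon }}\sim \varepsilon /\left( r_{\varepsilon }\sqrt{\mu ^{\varepsilon }}\right) $, and you get $v\in L^{2}$ via cross-sectional averages and Jensen. The two mechanisms are equivalent in substance (both are uniform boundedness of a linear form against $L^{2}$ test functions, using the weak convergence of $\delta _{\varepsilon }$ to Lebesgue measure), but yours is more elementary and avoids the detour through limit measures, at the price of being slightly less systematic when several quantities ($v_{i}$, $e_{33}\left( v\right) $) must be treated at once. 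Your trace argument for $v_{3}=0$ on $\Gamma _{1}$ (trace continuity of the averaged profiles, which are bounded in $H^{1}$ in $x_{3}$ and vanish at $x_{3}=0$) is also a valid substitute for the paper's computation with the explicit test function $\theta \left( x_{1},x_{2}\right) \psi \left( x_{3}\right) $, though you should state the uniform bound $\sum_{k}\varepsilon ^{2}\int_{0}^{L}\left\vert \partial _{x_{3}}\bar{u}_{\varepsilon ,3}^{k}\right\vert ^{2}dx_{3}\leq C\varepsilon ^{2}/\left( \mu ^{\varepsilon }\left( r_{\varepsilon }\right) ^{2}\right) $ explicitly, since it is again exactly the hypothesis $\mu _{o}>0$ that makes it work.
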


\begin{proof}
We use some argument similar to \cite[Lemme A1]{Bel-Bou}, defining:%
\[
\Phi _{\varepsilon }=e_{33}\left( u_{\varepsilon }\right) \text{, }\delta
_{\varepsilon }=\dfrac{\left\vert \Omega \right\vert }{\left\vert
T_{\varepsilon }\right\vert }\mathbf{1}_{T_{\varepsilon }}dx\text{, }\delta =%
\mathbf{1}_{\Omega }dx.
\]

$\delta _{\varepsilon }$ and $\delta $ are two bounded Radon measures such
that $\left( \delta _{\varepsilon }\right) _{\varepsilon }$ converges weakly
to $\delta $ in the sense of measures. We then compute 
\[
\begin{array}{rcl}
\dint_{\mathbf{R}^{3}}\left\vert \Phi _{\varepsilon }\right\vert \delta
_{\varepsilon } & \leq  & \left( \dint_{\mathbf{R}^{3}}\left\vert \Phi
_{\varepsilon }\right\vert ^{2}\delta _{\varepsilon }\right) ^{1/2}\sqrt{%
\left\vert T_{\varepsilon }\right\vert } \\ 
& \leq  & \dfrac{C}{\sqrt{\left\vert T_{\varepsilon }\right\vert }}\left(
\dint_{T_{\varepsilon }}\left\vert \Phi _{\varepsilon }\right\vert
^{2}dx\right) ^{1/2}\leq C\left( \underset{\varepsilon }{\sup }%
F^{\varepsilon }\left( u_{\varepsilon }\right) \right) ^{1/2}<+\infty ,%
\end{array}%
\]%
because $\left( \lambda ^{\varepsilon }\left\vert T_{\varepsilon
}\right\vert \right) _{\varepsilon }$ and $\left( \mu ^{\varepsilon
}\left\vert T_{\varepsilon }\right\vert \right) _{\varepsilon }$ have finite
limits. Hence, the sequence $\left( \Phi _{\varepsilon }\delta _{\varepsilon
}\right) _{\varepsilon }$ of measures has uniformly bounded variations. One
can extract some subsequence, still denoted by $\left( \Phi _{\varepsilon
}\delta _{\varepsilon }\right) _{\varepsilon }$, which converges to some
measure $\Phi $. For every $\varphi $ in $C_{c}^{o}\left( \mathbf{R}%
^{3}\right) $, we write Fenchel's inequality 
\[
\dint_{\mathbf{R}^{3}}\left\vert \Phi _{\varepsilon }\right\vert ^{2}\delta
_{\varepsilon }\geq 2\dint_{\mathbf{R}^{3}}\Phi _{\varepsilon }\varphi
\delta _{\varepsilon }-\dint_{\mathbf{R}^{3}}\varphi ^{2}\delta
_{\varepsilon },
\]%
which implies 
\[
\underset{\varepsilon \rightarrow 0}{\lim \inf }\dint_{\mathbf{R}%
^{3}}\left\vert \Phi _{\varepsilon }\right\vert ^{2}\delta _{\varepsilon
}\geq 2\left\langle \Phi ,\varphi \right\rangle -\dint_{\mathbf{R}%
^{3}}\varphi ^{2}\delta ,
\]%
where $\left\langle .,.\right\rangle $ means the duality product between
measures and functions, from which we deduce that : $\sup \left\{
\left\langle \Phi ,\varphi \right\rangle \mid \varphi \in C_{c}^{o}\left( 
\mathbf{R}^{3}\right) \text{, }\left\Vert \varphi \right\Vert _{L^{2}\left(
\Omega \right) }\leq 1\right\} <+\infty .$ Riesz's representation theorem
implies the existence of some $\chi $ in $L_{\delta }^{2}\left( \Omega
\right) $ such that for every $\varphi $ in $C_{c}^{o}\left( \mathbf{R}%
^{3}\right) $ : $\left\langle \Phi ,\varphi \right\rangle =\int_{\mathbf{R}%
^{3}}\chi \varphi \delta =\int_{\Omega }\chi \varphi dx$. For every $\varphi 
$ in $C_{0}^{1}\left( \Omega \right) $, one has 
\[
\begin{array}{ccl}
\underset{\varepsilon \rightarrow 0}{\lim }\dfrac{\left\vert \Omega
\right\vert }{\left\vert T_{\varepsilon }\right\vert }\dint\nolimits_{T_{%
\varepsilon }}e_{33}\left( u_{\varepsilon }\right) \varphi dx & = & 
\dint\nolimits_{\Omega }\chi \varphi dx \\ 
& = & -\underset{\varepsilon \rightarrow 0}{\lim }\dfrac{\left\vert \Omega
\right\vert }{\left\vert T_{\varepsilon }\right\vert }\dint\nolimits_{T_{%
\varepsilon }}\dfrac{\partial \varphi }{\partial x_{3}}\left( u_{\varepsilon
}\right) _{3}dx \\ 
& \underset{\varepsilon \rightarrow 0}{\rightarrow } & -\dint\nolimits_{%
\Omega }\dfrac{\partial \varphi }{\partial x_{3}}v_{3}dx=\dint\nolimits_{%
\Omega }\varphi e_{33}\left( v\right) dx.%
\end{array}%
\]

We thus get : $\int\nolimits_{\Omega }\left( \chi \varphi -\varphi
e_{33}\left( v\right) \right) dx=0$, which implies that $e_{33}\left(
v\right) $ ($=\chi $) belongs to $L^{2}\left( \Omega \right) $.

In order to prove that $v_{i}$ belongs to $L^{2}\left( \Omega \right) $, for 
$i=1,2,3$, we repeat the above argument with $\Phi _{\varepsilon ,i}=\left(
u_{\varepsilon }\right) _{i}$ instead of $\Phi _{\varepsilon }=e_{33}\left(
u_{\varepsilon }\right) $ and we use the estimates of Lemma \ref{1.1}\ 3.

In order to prove that $v_{3}$ is equal to 0 on $\Gamma _{1}$, let us take
any function $\varphi $ in $C^{1}\left( \overline{\Omega }\right) $ taking
the form: $\varphi \left( x\right) =\theta \left( x_{1},x_{2}\right) \psi
\left( x_{3}\right) $, with $\psi \left( 0\right) =1$, $\psi \left( L\right)
=0$, $\theta $ in $C^{\infty }\left( \omega \right) $. We first compute%
\[
\begin{array}{l}
\dint_{\Omega }\dfrac{\partial v_{3}}{\partial x_{3}}\varphi dx \\ 
\qquad 
\begin{array}{ll}
= & -\dint_{\Omega }\dfrac{\partial \varphi }{\partial x_{3}}v_{3}dx+\ 
\underset{\varepsilon \rightarrow 0}{\lim }\dfrac{\left\vert \Omega
\right\vert }{\left\vert T_{\varepsilon }\right\vert }\dint_{T_{\varepsilon
}}\left( 
\begin{array}{l}
\left( \varphi \left( u_{\varepsilon }\right) _{3}\right) \left(
x_{1},x_{2},L\right)  \\ 
-\left( \varphi \left( u_{\varepsilon }\right) _{3}\right) \left(
x_{1},x_{2},0\right) 
\end{array}%
\right) dx_{1}dx_{2} \\ 
= & -\dint_{\Omega }\dfrac{\partial \varphi }{\partial x_{3}}v_{3}dx,%
\end{array}%
\end{array}%
\]%
thanks to the boundary conditions verified by $\varphi $ and $u_{\varepsilon
}$. Moreover, using Green's formula, we get%
\[
\dint_{\Omega }\dfrac{\partial v_{3}}{\partial x_{3}}\varphi
dx=-\dint_{\Omega }\dfrac{\partial \varphi }{\partial x_{3}}%
v_{3}dx+\dint_{\omega }\theta \left( x_{1},x_{2}\right) v_{3}\left(
x_{1},x_{2},0\right) dx_{1}dx_{2},
\]%
which implies%
\[
\dint_{\omega }\theta \left( x_{1},x_{2}\right) v_{3}\left(
x_{1},x_{2},0\right) dx_{1}dx_{2}=0\Rightarrow v_{3}\left(
x_{1},x_{2},0\right) =0.
\]

Thus $v$ belongs to $V$.\qquad $\square $
\end{proof}

In order to prove this second assertion, we write the subdifferential
inequality for the first term of $F^{\varepsilon }\left( u_{\varepsilon
}\right) $ 
\[
\begin{array}{r}
\dint_{\Omega \backslash \overline{C_{\varepsilon }\cup T_{\varepsilon }}%
}\sigma _{ij}\left( u_{\varepsilon }\right) e_{ij}\left( u_{\varepsilon
}\right) dx\geq \dint_{\Omega \backslash \overline{C_{\varepsilon }\cup
T_{\varepsilon }}}\sigma _{ij}\left( \left( u^{n}\right) _{\varepsilon
}^{o}\right) e_{ij}\left( \left( u^{n}\right) _{\varepsilon }^{o}\right)
dx\qquad  \\ 
+2\dint_{\Omega \backslash \overline{C_{\varepsilon }\cup T_{\varepsilon }}%
}\sigma _{ij}\left( u^{n}\right) e_{ij}\left( u_{\varepsilon }-\left(
u^{n}\right) _{\varepsilon }^{o}\right) dx,%
\end{array}%
\]%
where $\left( u^{n}\right) _{\varepsilon }^{o}$ is associated to $u^{n}$
through (\ref{uoeps}). The sequence $\left( \left( u^{n}\right)
_{\varepsilon }^{o}\right) _{\varepsilon }$ converges to $u^{n}$ in the weak
topology of $H^{1}\left( \Omega ,\mathbf{R}^{3}\right) $, thanks to Lemma %
\ref{3.4}, and coincides with $u_{n}$ in $\Omega \backslash \overline{%
C_{\varepsilon }\cup T_{\varepsilon }}$. Thus, $\left( e_{ij}\left(
u_{\varepsilon }-\left( u^{n}\right) _{\varepsilon }^{o}\right) \right)
_{\varepsilon }$ converges to $e_{ij}\left( u-u^{n}\right) $ in the weak
topology of $L^{2}\left( \Omega \right) $, for $i,j=1,2,3$. The sequence of
characteristic functions of $\Omega \backslash \overline{C_{\varepsilon
}\cup T_{\varepsilon }}$ converges to $1$ in the strong topology of $%
L^{2}\left( \Omega \right) $. This implies the following convergence 
\[
\begin{array}{r}
\underset{\varepsilon \rightarrow 0}{\lim \inf }\dint_{\Omega \backslash 
\overline{C_{\varepsilon }\cup T_{\varepsilon }}}\sigma _{ij}\left(
u_{\varepsilon }\right) e_{ij}\left( u_{\varepsilon }\right) dx\geq
\dint_{\Omega }\sigma _{ij}\left( u^{n}\right) e_{ij}\left( u^{n}\right)
dx\qquad  \\ 
+2\dint_{\Omega }\sigma _{ij}\left( u^{n}\right) e_{ij}\left( u-u^{n}\right)
dx.%
\end{array}%
\]

Letting $n$ increase to $+\infty $ we get, using the convergence of $\left(
u^n\right) _n$ to $u$ in the strong topology of $H^1\left( \Omega ,\mathbf{R}%
^3\right) $

\begin{equation}  \label{calc4}
\underset{\varepsilon \rightarrow 0}{\lim \inf }\dint_{\Omega \backslash 
\overline{C_\varepsilon \cup T_\varepsilon }}\sigma _{ij}\left(
u_\varepsilon \right) e_{ij}\left( u_\varepsilon \right) dx\geq \dint_\Omega
\sigma _{ij}\left( u\right) e_{ij}\left( u\right) dx.
\end{equation}

We then write the subdifferential inequality for the second term of $%
F^\varepsilon \left( u_\varepsilon \right) $

\[
\begin{array}{r}
\dint\nolimits_{C_\varepsilon }\sigma _{ij}\left( u_\varepsilon \right)
e_{ij}\left( u_\varepsilon \right) dx\geq \dint\nolimits_{C_\varepsilon
}\sigma _{ij}\left( \left( u^n\right) _\varepsilon ^o\right) e_{ij}\left(
\left( u^n\right) _\varepsilon ^o\right) dx\qquad \\ 
+2\dint\nolimits_{C_\varepsilon }\sigma _{ij}\left( \left( u^n\right)
_\varepsilon ^o\right) e_{ij}\left( u_\varepsilon -\left( u_\varepsilon
^n\right) _\varepsilon ^o\right) dx,%
\end{array}
\]
with

\[
\begin{array}{r}
2\dint\nolimits_{C_\varepsilon }\sigma _{ij}\left( \left( u^n\right)
_\varepsilon ^o\right) e_{ij}\left( u_\varepsilon -\left( u^n\right)
_\varepsilon ^o\right) dx=2\dint\nolimits_{C_\varepsilon }\sigma _{ij}\left(
u^n\right) e_{ij}\left( u_\varepsilon -\left( u^n\right) _\varepsilon
^o\right) dx\qquad \\ 
+2\dint\nolimits_{C_\varepsilon }\sigma _{ij}\left( z_\varepsilon ^m\left(
\left( \mathcal{R}_\varepsilon \left( v^n\right) \right) _m-\left(
u^n\right) _m\right) \right) e_{ij}\left( u_\varepsilon -\left( u^n\right)
_\varepsilon ^o\right) dx.%
\end{array}
\]

We immediately get : $\lim _{\varepsilon \rightarrow
0}\int\nolimits_{C_\varepsilon }\sigma _{ij}\left( u^n\right) e_{ij}\left(
u_\varepsilon -\left( u^n\right) _\varepsilon ^o\right) dx=0,$ because the
sequence $\left( e_{ij}\left( u_\varepsilon -\left( u^n\right) _\varepsilon
^o\right) \right) _\varepsilon $ converges to $e_{ij}\left( u-u^n\right) $
in the weak topology of $L^2\left( \Omega \right) $, for $i,j=1,2,3$ and the
sequence of characteristic functions of $C_\varepsilon $ converges to 0 in
the strong topology of $L^2\left( \Omega \right) $. The second term of the
last equality can be computed as 
\[
\begin{array}{l}
\dint\nolimits_{C_\varepsilon }\sigma _{ij}\left( z_\varepsilon ^m\left(
\left( \mathcal{R}_\varepsilon \left( v^n\right) \right) _m-\left(
u^n\right) _m\right) \right) e_{ij}\left( u_\varepsilon -\left( u^n\right)
_\varepsilon ^o\right) dx \\ 
\qquad =\dint\nolimits_{C_\varepsilon }\sigma _{ij}\left( z_\varepsilon
^m\right) \left( \left( \mathcal{R}_\varepsilon \left( v^n\right) \right)
_m-\left( u^n\right) _m\right) e_{ij}\left( u_\varepsilon -\left( u^n\right)
_\varepsilon ^o\right) dx \\ 
\qquad \qquad +\dint\nolimits_{C_\varepsilon }a_{ijst}\left( z_\varepsilon
^m\right) _s\dfrac{\partial \left( \left( \mathcal{R}_\varepsilon \left(
v^n\right) \right) _m-\left( u^n\right) _m\right) }{\partial x_t}%
e_{ij}\left( u_\varepsilon -\left( u^n\right) _\varepsilon ^o\right) dx,%
\end{array}
\]
writing : $\sigma _{ij}=a_{ijst}e_{st}$. We observe that 
\[
\underset{\varepsilon \rightarrow 0}{\lim }\dint\nolimits_{C_\varepsilon
}a_{ijst}\left( z_\varepsilon ^m\right) _s\dfrac{\partial \left( \left( 
\mathcal{R}_\varepsilon \left( v^n\right) \right) _m-\left( u^n\right)
_m\right) }{\partial x_t}e_{ij}\left( u_\varepsilon -\left( u^n\right)
_\varepsilon ^o\right) dx=0, 
\]
because $\left( z_\varepsilon ^m\right) _\varepsilon $ converges to $0$ in
the strong topology of $L^2\left( \Omega ,\mathbf{R}^3\right) $, $\left|
\nabla \left( \mathcal{R}_\varepsilon \left( v^n\right) -u^n\right) \right|
\leq C_n$, in $C_\varepsilon $, and $\left( e_{ij}\left( u_\varepsilon
-\left( u^n\right) _\varepsilon ^o\right) \right) _\varepsilon $ converges
to $e_{ij}\left( u-u^n\right) $ in the weak topology of $L^2\left( \Omega
\right) $, for $i,j=1,2,3$. Then, we compute, using the definition of $%
z_\varepsilon ^m$ 
\[
\begin{array}{l}
\dint\nolimits_{C_\varepsilon }\sigma _{ij}\left( z_\varepsilon ^m\right)
\left( \left( \mathcal{R}_\varepsilon \left( v^n\right) \right) _m-\left(
u^n\right) _m\right) e_{ij}\left( u_\varepsilon -\left( u^n\right)
_\varepsilon ^o\right) dx \\ 
\quad =-\dsum\limits_k\dint\nolimits_{C_\varepsilon ^k}\sigma _{ij}\left(
w_\varepsilon ^{mk}\right) e_{ij}\left( u_\varepsilon -\left( u^n\right)
_\varepsilon ^o\right) \left( \left( \mathcal{R}_\varepsilon \left(
v^n\right) \right) _m-\left( u^n\right) _m\right) \varphi _\varepsilon ^kdx
\\ 
\qquad +\dsum\limits_k\dint\nolimits_{C_\varepsilon ^k}a_{ijst}\left( \left( 
\mathcal{R}_\varepsilon \left( v^n\right) \right) _m-\left( u^n\right)
_m\right) \left( e_m-w_\varepsilon ^{mk}\right) _s\dfrac{\partial \varphi
_\varepsilon ^k}{\partial x_t}e_{ij}\left( u_\varepsilon -\left( u^n\right)
_\varepsilon ^o\right) dx.%
\end{array}
\]

But, for every $k$, one has, thanks to the definition (\ref{phiepsk}) of $%
\varphi _{\varepsilon }^{k}$ and using Lemmas \ref{2.2} and \ref{3.4}
assertion \textit{1}. 
\[
\begin{array}{r}
\left\vert \dint\nolimits_{C_{\varepsilon }^{k}}\left( \left( \mathcal{R}%
_{\varepsilon }\left( v^{n}\right) \right) _{m}-\left( u^{n}\right)
_{m}\right) \left( e_{m}-w_{\varepsilon }^{mk}\right) _{s}\dfrac{\partial
\varphi _{\varepsilon }^{k}}{\partial x_{t}}e_{ij}\left( u_{\varepsilon
}-\left( u^{n}\right) _{\varepsilon }^{o}\right) dx\right\vert \qquad  \\ 
\leq \dfrac{C_{n}\left\vert \ln s_{\varepsilon }\right\vert }{\varepsilon
^{2}\left\vert \ln r_{\varepsilon }\right\vert }\dint\nolimits_{C_{%
\varepsilon }^{k}\cap \left\{ s_{\varepsilon }/2<R_{\varepsilon
}^{k}<s_{\varepsilon }\right\} }R_{\varepsilon }^{k}\left\vert \nabla \left(
u_{\varepsilon }-\left( u^{n}\right) _{\varepsilon }^{o}\right) \right\vert
dx.%
\end{array}%
\]

This implies, because $\left( u_{\varepsilon }\right) _{\varepsilon }$ and $%
\left( \left( u^{n}\right) _{\varepsilon }^{o}\right) _{\varepsilon }$ are
bounded in $H^{1}\left( \Omega ,\mathbf{R}^{3}\right) $ 
\[
\begin{array}{r}
\underset{\varepsilon \rightarrow 0}{\lim \sup }\left\vert
\dsum\limits_{k}\dint\nolimits_{C_{\varepsilon }^{k}}a_{ijst}\left( \left( 
\mathcal{R}_{\varepsilon }\left( v^{n}\right) \right) _{m}-\left(
u^{n}\right) _{m}\right) \left( e_{m}-w_{\varepsilon }^{mk}\right) _{s}%
\dfrac{\partial \varphi _{\varepsilon }^{k}}{\partial x_{t}}e_{ij}\left(
u_{\varepsilon }-\left( u^{n}\right) _{\varepsilon }^{o}\right)
dx\right\vert \qquad  \\ 
\leq \ \underset{\varepsilon \rightarrow 0}{\lim \sup }\dfrac{%
C_{n}\left\vert \ln s_{\varepsilon }\right\vert }{\varepsilon \left\vert \ln
r_{\varepsilon }\right\vert }\left( \dint\nolimits_{\Omega }\left\vert
\nabla \left( u_{\varepsilon }-\left( u^{n}\right) _{\varepsilon
}^{o}\right) \right\vert ^{2}dx\right) ^{1/2}=0,%
\end{array}%
\]%
because $\gamma $ is finite and using the properties of $s_{\varepsilon }$.\
Similarly, we estimate, using Lemma \ref{2.2} 
\[
\begin{array}{r}
\left\vert \dsum\limits_{k}\dint\nolimits_{C_{\varepsilon }^{k}}\sigma
_{ij}\left( w_{\varepsilon }^{mk}\right) e_{ij}\left( u_{\varepsilon
}-\left( u^{n}\right) _{\varepsilon }^{o}\right) \left( \left( \mathcal{R}%
_{\varepsilon }\left( v^{n}\right) \right) _{m}-\left( v^{n}\right)
_{m}\right) \varphi _{\varepsilon }^{k}dx\right\vert \qquad  \\ 
\leq \dfrac{C_{n}\sqrt{s_{\varepsilon }}}{\left\vert \ln \left(
r_{\varepsilon }\right) \right\vert }\left( \dint\nolimits_{\Omega
}\left\vert \nabla \left( u_{\varepsilon }-\left( u^{n}\right) _{\varepsilon
}^{o}\right) \right\vert ^{2}dx\right) ^{1/2}\underset{\varepsilon
\rightarrow 0}{\rightarrow }0,%
\end{array}%
\]%
because $\gamma $ is finite.\ We then have to compute the limit of the
remaining term

\[
\begin{array}{l}
\dsum\limits_k\dint\nolimits_{C_\varepsilon ^k}\sigma _{ij}\left(
w_\varepsilon ^{mk}\right) e_{ij}\left( u_\varepsilon -\left( u^n\right)
_\varepsilon ^o\right) \left( \left( v^n\right) _m-\left( u^n\right)
_m\right) \varphi _\varepsilon ^kdx \\ 
\qquad 
\begin{array}{ll}
= & -\dsum\limits_k\dint\nolimits_{C_\varepsilon ^k}\sigma _{ij,j}\left(
w_\varepsilon ^{mk}\right) \left( u_\varepsilon -\left( u^n\right)
_\varepsilon ^o\right) _i\left( \left( v^n\right) _m-\left( u^n\right)
_m\right) \varphi _\varepsilon ^kdx \\ 
& -\dsum\limits_k\dint\nolimits_{C_\varepsilon ^k}\sigma _{ij}\left(
w_\varepsilon ^{mk}\right) \left( u_\varepsilon -\left( u^n\right)
_\varepsilon ^o\right) _i \dfrac{\partial \left( \left( \left( v^n\right)
_m-\left( u^n\right) _m\right) \varphi _\varepsilon ^k\right) }{\partial x_j}%
dx \\ 
& +\dsum\limits_k\dint\nolimits_{\partial T_\varepsilon ^k}\sigma
_{ij}\left( w_\varepsilon ^{mk}\right) n_j\left( u_\varepsilon -\left(
u^n\right) _\varepsilon ^o\right) _i\left( \left( v^n\right) _m-\left(
u^n\right) _m\right) dx.%
\end{array}%
\end{array}
\]

Using the estimates of Lemma \ref{2.2}, we prove that the second term above
converges to 0. Using the properties of $w_{\varepsilon }^{mk}$, the first
term above is equal to 0.\ Then, the properties of $w_{\varepsilon }^{mk}$
and the convergence of $\left( u_{\varepsilon }-\left( u^{n}\right)
_{\varepsilon }^{o}\right) _{\varepsilon }$ to $u-u^{n}$ in the weak
topology of $H^{1}\left( \Omega ,\mathbf{R}^{3}\right) $ imply

\[
\begin{array}{r}
\underset{\varepsilon \rightarrow 0}{\lim }\dsum\limits_k\dint\nolimits_{C_%
\varepsilon ^k}\sigma _{ij}\left( w_\varepsilon ^{mk}\right) e_{ij}\left(
u_\varepsilon -\left( u^n\right) _\varepsilon ^o\right) \left( \left(
v^n\right) _m-\left( u^n\right) _m\right) \varphi _\varepsilon ^kdx\qquad \\ 
=2\pi \gamma \dint_\Omega \left( v^n-u^n\right) ^tA\left( u-u^n\right) dx.%
\end{array}
\]

We let $n$ increase to $+\infty $ and get

\[
\underset{\varepsilon \rightarrow 0}{\lim \inf }\dsum\limits_k\dint%
\nolimits_{C_\varepsilon ^k}\sigma _{ij}\left( w_\varepsilon ^{mk}\right)
e_{ij}\left( u_\varepsilon -\left( u^n\right) _\varepsilon ^o\right) \left(
\left( v^n\right) _m-\left( u^n\right) _m\right) \varphi _\varepsilon
^kdx\geq 0, 
\]
which implies, using the computations of the first assertion

\begin{equation}  \label{calc6}
\underset{\varepsilon \rightarrow 0}{\lim \inf }\dint\nolimits_{C_%
\varepsilon }\sigma _{ij}\left( u_\varepsilon \right) e_{ij}\left(
u_\varepsilon \right) dx\geq 2\pi \gamma \dint_\Omega \left( v-u\right)
^tA\left( v-u\right) dx.
\end{equation}

We finally observe that for the third term of $F^\varepsilon \left(
u_\varepsilon \right) $, one has 
\[
\dint\nolimits_{T_\varepsilon }\sigma _{ij}^\varepsilon \left( u_\varepsilon
\right) e_{ij}\left( u_\varepsilon \right) dx\geq \mu ^\varepsilon \dfrac{%
2\mu ^\varepsilon +3\lambda ^\varepsilon }{\mu ^\varepsilon +\lambda
^\varepsilon }\dint\nolimits_{T_\varepsilon }\left( e_{33}\left(
u_\varepsilon \right) \right) ^2dx. 
\]

Indeed, one can easily verify that for every $x$, $y$, $z$ in $\mathbf{R}$,
one has

\[
\lambda ^\varepsilon \left( x+y+z\right) ^2+2\mu ^\varepsilon \left(
x^2+y^2+z^2\right) \geq \mu ^\varepsilon \dfrac{2\mu ^\varepsilon +3\lambda
^\varepsilon }{\mu ^\varepsilon +\lambda ^\varepsilon }z^2. 
\]

We then use the computations given in Lemma \ref{3.5}, which imply, because $%
\mu _{o}$ and $\lambda _{o}$ are finite 
\begin{equation}
\underset{\varepsilon \rightarrow 0}{\lim \inf }\dint\nolimits_{T_{%
\varepsilon }}\sigma _{ij}^{\varepsilon }\left( u_{\varepsilon }\right)
e_{ij}\left( u_{\varepsilon }\right) dx\geq \pi E_{o}\dint_{\Omega }\left(
e_{33}\left( v\right) \right) ^{2}dx.  \label{calc7}
\end{equation}

One deduces from (\ref{calc4})-(\ref{calc7}) 
\[
\begin{array}{r}
\underset{\varepsilon \rightarrow 0}{\lim \inf }F^{\varepsilon }\left(
u_{\varepsilon }\right) \geq \dint_{\Omega }\sigma _{ij}\left( u\right)
e_{ij}\left( u\right) dx+2\pi \gamma \dint_{\Omega }\left( v-u\right)
^{t}A\left( v-u\right) dx\qquad  \\ 
+\pi E_{o}\dint_{\Omega }\left( e_{33}\left( v\right) \right) ^{2}dx,%
\end{array}%
\]%
which concludes the proof.\qquad $\square $

\subsection{Other situations}

The other situations given by different values of the parameters $\gamma $
or $\lambda _o$ or $\mu _o$ are summarized in the

\begin{proposition}
\label{3.6}

\begin{enumerate}
\item If \ $\lambda _{o}$ and $\mu _{o}$ are equal to $0,$ then $\left(
u^{\varepsilon }\right) _{\varepsilon }$ converges in the topology $\tau $
to the solution $\left( u_{o}^{o},v_{o}^{o}\right) $ of the minimization
problem associated to the functional $F_{o}^{o}$ defined in a similar way
than (\ref{Fo}), but with $\lambda _{o}=\mu _{o}=0$.

\item If $\gamma $ is equal to $+\infty $, one obtains $u^{o\infty
}=v^{o\infty }$ in $\Omega $ and $F^{o\infty }$ only depends on $u$ 
\[
F^{o\infty }\left( u\right) =\dint_{\Omega }\sigma _{ij}\left( u\right)
e_{ij}\left( u\right) dx+\pi E_{o}\dint_{\Omega }\left( e_{33}\left(
u\right) \right) ^{2}dx.
\]
\end{enumerate}
\end{proposition}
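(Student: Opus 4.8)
The plan is to derive both assertions not by a fresh argument but by re-reading the proof of Theorem \ref{3.1} and watching how the limit functional $F^{o}$ of (\ref{Fo}) degenerates in each regime. Once the relevant epi-convergence is in hand, convergence of the minimizers $u^{\varepsilon}$ is automatic: the sequence $(u^{\varepsilon})_{\varepsilon}$ is equi-coercive by Lemma \ref{1.1}, and equi-coercivity together with epi-convergence yields convergence of minima and of minimizers, exactly as in Corollary \ref{3.2}. Thus in each case it suffices to re-identify the epi-limit in the topology $\tau$.

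For assertion 1 (with $\gamma$ still finite as in Theorem \ref{3.1}) I would first record that $E_{o}=0$. Indeed $\lambda^{\varepsilon},\mu^{\varepsilon}\geq 0$ force $2\leq(2\mu^{\varepsilon}+3\lambda^{\varepsilon})/(\mu^{\varepsilon}+\lambda^{\varepsilon})\leq 3$, so $E_{o}=\lim_{\varepsilon}\big(\mu^{\varepsilon}(r_{\varepsilon})^{2}/\varepsilon^{2}\big)(2\mu^{\varepsilon}+3\lambda^{\varepsilon})/(\mu^{\varepsilon}+\lambda^{\varepsilon})$ is squeezed to $0$ by $\mu_{o}=0$. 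With $E_{o}=0$ the proof of Theorem \ref{3.1} carries over line by line: in the $\limsup$ part the third integral of (\ref{Fepsuoe}) now tends to $\pi E_{o}\int_{\Omega}(e_{33}(v))^{2}dx=0$ while (\ref{calc3}) and (\ref{calc1}) are unchanged; in the $\liminf$ part (\ref{calc4}) and (\ref{calc6}) never use $\mu_{o}$, and (\ref{calc7}) degenerates to the trivial bound $\geq 0$. Hence $(F^{\varepsilon})_{\varepsilon}$ epi-converges in $\tau$ to $F_{o}^{o}(u,v)=\int_{\Omega}\sigma_{ij}(u)e_{ij}(u)dx+2\pi\gamma\int_{\Omega}(v-u)^{t}A(v-u)dx$. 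The one point needing care is the target space: the identification $e_{33}(v)\in L^{2}(\Omega)$ in Lemma \ref{3.5} used $(\mu^{\varepsilon}\lvert T_{\varepsilon}\rvert)_{\varepsilon}$ bounded away from $0$, which fails when $\mu_{o}=0$; the constraint $e_{33}(v)\in L^{2}(\Omega)$, and with it $v_{3\mid\Gamma_{1}}=0$, is therefore dropped and $V$ is enlarged to the space carrying only the information of the third assertion of Lemma \ref{1.1} (i.e. $v\in L^{2}(\Omega,\mathbf{R}^{3})$). This is how I read ``defined in a similar way than (\ref{Fo}), but with $\lambda_{o}=\mu_{o}=0$''.

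For assertion 2 ($\gamma=+\infty$, with $\lambda_{o},\mu_{o}$ finite so $E_{o}$ is defined) the real content is the rigidity $v=u$, and this is where I expect the main obstacle. Let $(u_{\varepsilon})_{\varepsilon}$ converge to $(u,v)$ in $\tau$ with $\sup_{\varepsilon}F^{\varepsilon}(u_{\varepsilon})<+\infty$; by Lemma \ref{1.1} and coercivity $(u_{\varepsilon})_{\varepsilon}$ is bounded in $H^{1}$, so $\int_{C_{\varepsilon}}\lvert\nabla u_{\varepsilon}\rvert^{2}dx\leq C$. On each cell a capacitary estimate --- the radial computation already underlying Lemma \ref{1.2} --- gives $\int_{C_{\varepsilon}^{k}}\lvert\nabla u_{\varepsilon}\rvert^{2}dx\geq\dfrac{cL}{\ln(s_{\varepsilon}/r_{\varepsilon})}\lvert a_{k}-b_{k}\rvert^{2}$, where $a_{k},b_{k}$ are the circular averages of $u_{\varepsilon}$ at radii $r_{\varepsilon}$ and $s_{\varepsilon}$; summing over $k$ yields $\sum_{k}\lvert a_{k}-b_{k}\rvert^{2}\leq C\lvert\ln r_{\varepsilon}\rvert$. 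Since $R^{\varepsilon}(u_{\varepsilon})$ reproduces the fiber averages $a_{k}$ (each with weight of order $L\varepsilon^{2}$) whereas $u$ reproduces the matrix averages $b_{k}$, a Cauchy--Schwarz estimate gives, for every test function $\varphi$, $\big\lvert\int_{\Omega}(v-u)\varphi\,dx\big\rvert\lesssim\varepsilon\lvert\ln r_{\varepsilon}\rvert^{1/2}=(\varepsilon^{2}\lvert\ln r_{\varepsilon}\rvert)^{1/2}=\gamma_{\varepsilon}^{-1/2}\to0$ because $\gamma_{\varepsilon}=-1/(\varepsilon^{2}\ln r_{\varepsilon})\to+\infty$. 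Hence $v=u$.

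Once $v=u$ is known the two inequalities are routine. For the $\liminf$, (\ref{calc4}) supplies $\int_{\Omega}\sigma_{ij}(u)e_{ij}(u)dx$, the $C_{\varepsilon}$-integral is bounded below by $0$ (its would-be limit $2\pi\gamma\int_{\Omega}(v-u)^{t}A(v-u)dx$ is $\infty\cdot0$, but $v=u$ already annihilates it), and (\ref{calc7}), whose proof needs only $\lambda_{o},\mu_{o}$ finite, provides $\pi E_{o}\int_{\Omega}(e_{33}(u))^{2}dx$. For the $\limsup$, I would run the recovery sequence of Theorem \ref{3.1} built from the pair $(u,u)$, i.e. taking $v:=u$; then the dangerous factor $2\pi\gamma\int_{\Omega}(v-u)^{t}A(v-u)dx$ is identically $0$ and the blow-up of $\gamma$ never occurs, while the residual terms of (\ref{int2}) still vanish by the estimates of Lemmas \ref{2.2} and \ref{3.4}, being controlled by $\gamma_{\varepsilon}s_{\varepsilon}^{2}=s_{\varepsilon}^{2}/(\varepsilon^{2}\lvert\ln r_{\varepsilon}\rvert)\leq 1/(4\lvert\ln r_{\varepsilon}\rvert)\to0$. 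This gives $\limsup_{\varepsilon}F^{\varepsilon}(u_{\varepsilon}^{o})\leq\int_{\Omega}\sigma_{ij}(u)e_{ij}(u)dx+\pi E_{o}\int_{\Omega}(e_{33}(u))^{2}dx=F^{o\infty}(u)$, so $(F^{\varepsilon})_{\varepsilon}$ epi-converges to $F^{o\infty}$ and $u^{o\infty}=v^{o\infty}$. The delicate step throughout is the capacitary rigidity $v=u$; everything else is bookkeeping inside the proof of Theorem \ref{3.1}.
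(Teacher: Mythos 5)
Your proposal is correct in substance but follows a genuinely different route from the paper. The paper handles both assertions by a comparison/monotonicity argument on epi-limits: for assertion~1 it bounds $F^{\varepsilon}$ above by the functional with the critical coefficients $\lambda_{c}^{\varepsilon}=\mu_{c}^{\varepsilon}=c\varepsilon^{2}/(r_{\varepsilon})^{2}$, applies Theorem~\ref{3.1} to that auxiliary functional, and lets $c\rightarrow 0$; for assertion~2 it bounds $F^{\varepsilon}$ below by the functional with the subcritical radius $\exp\left( -1/(C\varepsilon^{2})\right) $, obtains $F^{o\infty }\geq F^{oC}$ with coupling coefficient proportional to $C$, and lets $C\rightarrow +\infty $ to force $\int_{\Omega }\left( v-u\right) ^{t}A\left( v-u\right) dx=0$, i.e.\ $u=v$. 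You instead re-run the epi-convergence proof directly: for assertion~1 you observe $E_{o}=0$ by squeezing $(2\mu ^{\varepsilon }+3\lambda ^{\varepsilon })/(\mu ^{\varepsilon }+\lambda ^{\varepsilon })\in \lbrack 2,3]$, and for assertion~2 you prove the rigidity $v=u$ by a capacitary averaging estimate on the annuli $C_{\varepsilon }^{k}$. Each approach buys something: the paper's is shorter and reuses Theorem~\ref{3.1} wholesale, but it obtains $u=v$ only indirectly (as a finiteness condition on the limit functional) and leaves both the lower bound in assertion~1 and the recovery sequence in assertion~2 to ``adaptation''; your version makes the rigidity mechanism explicit and, importantly, flags a point the paper glosses over, namely that when $\mu _{o}=0$ the identification $e_{33}\left( v\right) \in L^{2}\left( \Omega \right) $ in Lemma~\ref{3.5} breaks down (it relies on $\mu ^{\varepsilon }\left\vert T_{\varepsilon }\right\vert $ being bounded away from $0$), so the constraint space $V$ must be enlarged. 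One caveat: your capacitary step is only sketched --- passing from ``$u$ reproduces the matrix averages $b_{k}$'' to the weak limit requires an additional Poincar\'{e}-type comparison between the circle averages at radius $s_{\varepsilon }$ and the cell averages over $Y_{\varepsilon }^{k}\times \left] 0,L\right[ $, with cost of order $\ln \left( \varepsilon /s_{\varepsilon }\right) $, which is controlled by the hypothesis $\varepsilon ^{2}\ln ^{2}s_{\varepsilon }\rightarrow 0$; this is the standard technique of \cite{Bel-Bou} and can be completed, but as written it is an assertion rather than a proof.
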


\begin{proof}
\textit{1}. This case corresponds to a situation where the Lam\'{e}
coefficients $\lambda ^{\varepsilon }$ and $\mu ^{\varepsilon }$ of the
reinforcing material are smaller than the critical ones given in (\ref%
{lambdas}), that is given by%
\[
\lambda _{c}^{\varepsilon }=\dfrac{c\varepsilon ^{2}}{\left( r_{\varepsilon
}\right) ^{2}}\text{, }\mu _{c}^{\varepsilon }=\dfrac{c\varepsilon ^{2}}{%
\left( r_{\varepsilon }\right) ^{2}},
\]%
for every positive and small $c$, but preserving the critical radius $%
r_{\varepsilon }$ of the fibers given through $\gamma $. Let $%
F_{c}^{\varepsilon }$ be the functional defined in (\ref{Feps}) but with
these critical Lam\'{e} coefficients. Thanks to the property of the
epi-convergence, we get, for every $\left( u,v\right) $ in $H_{\Gamma
_{1}}^{1}\left( \Omega ,\mathbf{R}^{3}\right) \times V$%
\[
\begin{array}{r}
F_{o}^{o}\left( u,v\right) \leq F_{c}^{o}\left( u,v\right) =\dint_{\Omega
}\sigma _{ij}\left( u\right) e_{ij}\left( u\right) dx+\pi
cE_{o}\dint_{\Omega }\left( e_{33}\left( v\right) \right) ^{2}dx\qquad  \\ 
+2\pi \gamma \dint_{\Omega }\left( v-u\right) ^{t}A\left( v-u\right) dx.%
\end{array}%
\]

This inequality being true for every positive $c$, we get, letting $c$ go to
0%
\[
F_{o}^{o}\left( u,v\right) \leq \dint_{\Omega }\sigma _{ij}\left( u\right)
e_{ij}\left( u\right) dx+2\pi \gamma \dint_{\Omega }\left( v-u\right)
^{t}A\left( v-u\right) dx.
\]

In order to establish the reverse inequality, we observe that, for every
sequence $\left( u_{\varepsilon }\right) _{\varepsilon }$ converging to $%
\left( u,v\right) $ in the above-defined topology $\tau $, one has%
\[
F^{\varepsilon }\left( u_{\varepsilon }\right) \geq \dint_{\Omega \backslash 
\overline{B_{\varepsilon }}}\sigma _{ij}\left( u_{\varepsilon }\right)
e_{ij}\left( u_{\varepsilon }\right) dx+\dint_{C_{\varepsilon }}\sigma
_{ij}\left( u_{\varepsilon }\right) e_{ij}\left( u_{\varepsilon }\right) dx,
\]%
thus omitting the integral involving the fibers $T_{\varepsilon }$. We then
adapt the proof of the second assertion in the Theorem \ref{3.1} in order to
conclude

\noindent \textit{2}. We again observe that this situation corresponds to a
case where the Lam\'{e} coefficients of the reinforcing material are still
given by (\ref{lambdas}) but where the radius of the fibers is larger than
the critical one, that is $r_{\varepsilon }\geq \exp \left( -1/C\varepsilon
^{2}\right) $, for every positive $C$. The functional $F^{\varepsilon }$ is
thus larger than the functional $F^{\varepsilon C}$ given by (\ref{Feps}),
but with the radius $\exp \left( -1/C\varepsilon ^{2}\right) $. The
comparison principle implies that for every $\left( u,v\right) $ in $%
H_{\Gamma _{1}}^{1}\left( \Omega ,\mathbf{R}^{3}\right) \times V$%
\[
\begin{array}{r}
F^{o\infty }\left( u,v\right) \geq F^{oC}\left( u,v\right) =\dint_{\Omega
}\sigma _{ij}\left( u\right) e_{ij}\left( u\right) dx+\pi E_{o}\dint_{\Omega
}\left( e_{33}\left( v\right) \right) ^{2}dx\qquad  \\ 
+2\pi \gamma C\dint_{\Omega }\left( v-u\right) ^{t}A\left( v-u\right) dx.%
\end{array}%
\]

Letting $C$ increase to $+\infty $, we observe that $F^{o\infty }\left(
u,v\right) $ is finite if and only if the integral $\int_{\Omega }\left(
v-u\right) ^{t}A\left( v-u\right) dx=0$, which implies : $u=v$, in $\Omega $%
. The reverse inequality is still obtained adapting the proof of Theorem \ref%
{3.1} (first part) but with $v=u$.\qquad $\square $
\end{proof}

Let us now examine the special case when $\lambda _{o}=\mu _{o}=+\infty $.
As a special subcase, \cite{Pid-Sep} have considered the case when $\gamma
=+\infty $ and

\begin{equation}  \label{coef1}
\dfrac{\lambda ^\varepsilon \left( r_\varepsilon \right) ^4}{\varepsilon ^2}%
\underset{\varepsilon \rightarrow 0}{\rightarrow }\lambda _1\text{, }\dfrac{%
\mu ^\varepsilon \left( r_\varepsilon \right) ^4}{\varepsilon ^2}\underset{%
\varepsilon \rightarrow 0}{\rightarrow }\mu _1\text{,}
\end{equation}

\noindent with positive and finite $\lambda _1$ and $\mu _1$. We now adapt
their result considering

\begin{proposition}
\label{3.7}Suppose that the above hypothesis (\ref{coef1}) holds true and $%
\gamma $ belongs to $\left] 0,+\infty \right] $. Then, the sequence $\left(
u^{\varepsilon }\right) _{\varepsilon }$ converges in the topology $\tau $,
to the solution $\left( u^{1},v^{1}\right) $ of 
\[
\underset{H_{\Gamma _{1}}^{1}\left( \Omega ,\mathbf{R}^{3}\right) \times
V^{\prime }}{\min }\left( 
\begin{array}{r}
\dint_{\Omega }\sigma _{ij}\left( u\right) e_{ij}\left( u\right) dx+2\pi
\gamma \dint_{\Omega }\left( v-u\right) ^{t}A\left( v-u\right) dx\qquad  \\ 
+\dfrac{\pi E_{1}}{4}\dint_{\Omega }\left( \left( \dfrac{\partial ^{2}v_{1}}{%
\partial x_{3}^{2}}\right) ^{2}+\left( \dfrac{\partial ^{2}v_{2}}{\partial
x_{3}^{2}}\right) ^{2}\right) dx%
\end{array}%
\right) ,
\]%
with $E_{1}=\mu _{1}\left( 3\lambda _{1}+2\mu _{1}\right) /\left( \lambda
_{1}+\mu _{1}\right) $ and 
\[
V^{\prime }=\left\{ v_{\alpha }\in L^{2}\left( \omega ,H^{2}\left(
0,L\right) \right) \mid v_{\mid \Gamma _{1}}=0\text{, }v_{3}=0\right\} .
\]
\end{proposition}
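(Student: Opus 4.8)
The plan is to follow the two-part epi-convergence scheme already used for Theorem \ref{3.1}, replacing the stretching recovery operator $\mathcal{R}_{\varepsilon }\left( v\right) $ by one adapted to the scaling (\ref{coef1}), under which the fibers resist \emph{bending} rather than stretching. Since the coupling term $2\pi \gamma \int_{\Omega }\left( v-u\right) ^{t}A\left( v-u\right) dx$ is produced entirely by the functions $z_{\varepsilon }^{m}$ of Lemma \ref{2.3} together with the scaling $\gamma $, it appears verbatim as in Theorem \ref{3.1}; only the contribution of the fibers $T_{\varepsilon }$ changes, and it is there that the flexural energy $\frac{\pi E_{1}}{4}\int_{\Omega }\left( \left( \partial ^{2}v_{1}/\partial x_{3}^{2}\right) ^{2}+\left( \partial ^{2}v_{2}/\partial x_{3}^{2}\right) ^{2}\right) dx$ must be created and recovered.

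For the $\limsup $ inequality I would fix smooth $u\in C^{1}\left( \overline{\Omega }\right) \cap H_{\Gamma _{1}}^{1}$ and a smooth $v$ with $v_{3}=0$ and $v_{\alpha }$ vanishing on $\Gamma _{1}$, and in each fiber $T_{\varepsilon }^{k}$ prescribe the Euler--Bernoulli kinematics $\left( \mathcal{R}_{\varepsilon }^{1}\left( v\right) \right) _{3}=-\left( x_{\alpha }-k_{\alpha }\varepsilon \right) \partial v_{\alpha }/\partial x_{3}$ together with a quadratic transverse Poisson warping of the first two components. The warping is chosen so that $e_{11}=e_{22}=-\frac{\lambda ^{\varepsilon }}{2\left( \lambda ^{\varepsilon }+\mu ^{\varepsilon }\right) }e_{33}$ and $e_{12}=0$, which relaxes the lateral stresses $\sigma _{11},\sigma _{22}$ to leading order and replaces the naive modulus $\lambda ^{\varepsilon }+2\mu ^{\varepsilon }$ by the Young modulus $E^{\varepsilon }=\mu ^{\varepsilon }\left( 3\lambda ^{\varepsilon }+2\mu ^{\varepsilon }\right) /\left( \lambda ^{\varepsilon }+\mu ^{\varepsilon }\right) $. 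With $e_{33}=-\left( x_{\alpha }-k_{\alpha }\varepsilon \right) \partial ^{2}v_{\alpha }/\partial x_{3}^{2}$, the integral $\int_{T_{\varepsilon }}\sigma _{ij}^{\varepsilon }\left( \mathcal{R}_{\varepsilon }^{1}\left( v\right) \right) e_{ij}\left( \mathcal{R}_{\varepsilon }^{1}\left( v\right) \right) dx$ is dominated by $E^{\varepsilon }\int_{T_{\varepsilon }}\left( e_{33}\right) ^{2}dx$; the disc second moment $\int_{D_{\varepsilon }^{k}}\left( x_{\alpha }-k_{\alpha }\varepsilon \right) ^{2}=\pi \left( r_{\varepsilon }\right) ^{4}/4$, the Riemann-sum limit $\sum_{k}\varepsilon ^{2}\int_{0}^{L}\left( \cdots \right) dx_{3}\rightarrow \int_{\Omega }$, and the limit $E^{\varepsilon }\left( r_{\varepsilon }\right) ^{4}/\varepsilon ^{2}\rightarrow E_{1}$ coming from (\ref{coef1}) then yield exactly $\frac{\pi E_{1}}{4}\int_{\Omega }\left( \left( \partial ^{2}v_{1}/\partial x_{3}^{2}\right) ^{2}+\left( \partial ^{2}v_{2}/\partial x_{3}^{2}\right) ^{2}\right) dx$. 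The test-function is then assembled as in (\ref{uoeps}), namely $u_{\varepsilon }^{o}=u-\psi _{\varepsilon }z_{\varepsilon }^{m}\left( u_{m}-\left( \mathcal{R}_{\varepsilon }^{1}\left( v\right) \right) _{m}\right) $, the $O\left( \left( r_{\varepsilon }\right) ^{2}\right) $ warping being harmless for the coupling term, and a density and diagonalization argument as in \cite[Corollary 1.18]{Att} extends the estimate to arbitrary $\left( u,v\right) $.

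For the $\liminf $ inequality I would start, as in Lemma \ref{3.5}, from a sequence $\left( u_{\varepsilon }\right) _{\varepsilon }$ converging to $\left( u,v\right) $ in the topology $\tau $ with $\sup_{\varepsilon }F^{\varepsilon }\left( u_{\varepsilon }\right) <+\infty $, and first identify the limit space. The bulk and coupling lower bounds are obtained exactly as for (\ref{calc4}) and (\ref{calc6}), through the subdifferential inequalities and the properties of $w_{\varepsilon }^{mk}$. The fibers are treated by the pointwise convexity inequality $\lambda ^{\varepsilon }\left( x+y+z\right) ^{2}+2\mu ^{\varepsilon }\left( x^{2}+y^{2}+z^{2}\right) \geq E^{\varepsilon }z^{2}$, which gives $\int_{T_{\varepsilon }}\sigma _{ij}^{\varepsilon }\left( u_{\varepsilon }\right) e_{ij}\left( u_{\varepsilon }\right) dx\geq E^{\varepsilon }\int_{T_{\varepsilon }}\left( e_{33}\left( u_{\varepsilon }\right) \right) ^{2}dx$. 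The crux is to pass to the limit here. Since $E^{\varepsilon }\left( r_{\varepsilon }\right) ^{4}/\varepsilon ^{2}$ stays bounded, the rescaled cross-sectional average of $e_{33}\left( u_{\varepsilon }\right) $, whose weak limit is $e_{33}\left( v\right) =\partial v_{3}/\partial x_{3}$ by the argument of Lemma \ref{3.5}, is forced to vanish, so that $v_{3}=0$ after using the boundary condition; the surviving flexural information is carried instead by the \emph{first} transverse moments of $e_{33}\left( u_{\varepsilon }\right) $. Extracting the rescaled first moments of $\left( u_{\varepsilon }\right) _{3}$, showing by the direct method that they converge to the rotations $\partial v_{\alpha }/\partial x_{3}$ with $L^{2}$ derivatives, thereby placing $v_{\alpha }$ in $L^{2}\left( \omega ,H^{2}\left( 0,L\right) \right) $ and hence $v$ in $V^{\prime }$, and passing to the weak lower limit in the quadratic form, is what yields $\liminf_{\varepsilon \rightarrow 0}E^{\varepsilon }\int_{T_{\varepsilon }}\left( e_{33}\left( u_{\varepsilon }\right) \right) ^{2}dx\geq \frac{\pi E_{1}}{4}\int_{\Omega }\left( \left( \partial ^{2}v_{1}/\partial x_{3}^{2}\right) ^{2}+\left( \partial ^{2}v_{2}/\partial x_{3}^{2}\right) ^{2}\right) dx$. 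This $H^{2}$-compactness and lower-semicontinuity step, adapted from \cite{Pid-Sep}, is the main obstacle; the condition $v_{\mid \Gamma _{1}}=0$ follows as in Lemma \ref{3.5}.

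Finally, the two ranges of $\gamma $ are separated exactly as in Proposition \ref{3.6}: for finite $\gamma \in \left] 0,+\infty \right[ $ the construction and the lower bound combine to identify the announced functional on $H_{\Gamma _{1}}^{1}\left( \Omega ,\mathbf{R}^{3}\right) \times V^{\prime }$, whereas for $\gamma =+\infty $ a monotonicity argument in $\gamma $ --- the coupling term being finite only when $\int_{\Omega }\left( v-u\right) ^{t}A\left( v-u\right) dx=0$, hence $v=u$ --- collapses the limit to a functional of $u$ alone, the recovery sequence being rebuilt with $v=u$. The convergence of the minimizers and of the energies then follows from the general properties of epi-convergence, as for Corollary \ref{3.2}.
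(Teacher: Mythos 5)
Your proposal follows essentially the same route as the paper: the paper's proof is itself only a sketch that introduces the a priori estimates from \cite{Pid-Sep}, defines the bending recovery operator $\mathcal{R}_{\varepsilon 1}\left( v\right) $ (Euler--Bernoulli kinematics in the third component plus a quadratic Poisson-type transverse warping, exactly as you describe), and then asserts that both epi-convergence inequalities are obtained as in Theorem \ref{3.1}. Your computation of the flexural constant via the second moment $\pi \left( r_{\varepsilon }\right) ^{4}/4$ of the disc and the limit $E^{\varepsilon }\left( r_{\varepsilon }\right) ^{4}/\varepsilon ^{2}\rightarrow E_{1}$, and your treatment of the $\liminf $ via first transverse moments, correctly fill in the details the paper leaves implicit.
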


\begin{proof}
We proceed in a similar way to \cite{Pid-Sep}. Indeed, we first follow their
method in order to prove the following estimates%
\[
\dfrac{1}{\left\vert T_{\varepsilon }\right\vert }\dint_{T_{\varepsilon
}}\left\vert u^{\varepsilon }\right\vert dx<C\text{, }\dfrac{1}{\left\vert
T_{\varepsilon }\right\vert }\dint_{T_{\varepsilon }}\left\vert
u^{\varepsilon }\right\vert ^{2}dx<C\text{, }\dfrac{1}{\left( r_{\varepsilon
}\right) ^{2}\left\vert T_{\varepsilon }\right\vert }\dint_{T_{\varepsilon
}}\left\vert e_{ij}\left( u^{\varepsilon }\right) \right\vert ^{2}dx<C,
\]%
where $C$ is independant of $\varepsilon $. For every smooth $v$ in $%
C^{2}\left( \overline{\Omega },\mathbf{R}^{3}\right) \cap V^{\prime }$, we
set%
\[
\left\{ 
\begin{array}{l}
\left( \mathcal{R}_{\varepsilon 1}\left( v\right) \right) _{1}\left(
x_{1},x_{2},x_{3}\right) =v_{1}\left( k_{1}\varepsilon ,k_{2}\varepsilon
,x_{3}\right)  \\ 
\qquad -\dfrac{\lambda ^{\varepsilon }}{2\left( \mu ^{\varepsilon }+\lambda
^{\varepsilon }\right) }\dfrac{\left( x_{1}-k_{1}\varepsilon \right)
^{2}-\left( x_{2}-k_{2}\varepsilon \right) ^{2}}{2}\dfrac{\partial ^{2}v_{1}%
}{\partial x_{3}^{2}}\left( k_{1}\varepsilon ,k_{2}\varepsilon ,x_{3}\right) 
\\ 
\qquad -\dfrac{\lambda ^{\varepsilon }}{2\left( \mu ^{\varepsilon }+\lambda
^{\varepsilon }\right) }\dfrac{\left( x_{1}-k_{1}\varepsilon \right) \left(
x_{2}-k_{2}\varepsilon \right) }{2}\dfrac{\partial ^{2}v_{2}}{\partial
x_{3}^{2}}\left( k_{1}\varepsilon ,k_{2}\varepsilon ,x_{3}\right)  \\ 
\left( \mathcal{R}_{\varepsilon 1}\left( v\right) \right) _{2}\left(
x_{1},x_{2},x_{3}\right) =v_{2}\left( k_{1}\varepsilon ,k_{2}\varepsilon
,x_{3}\right)  \\ 
\qquad -\dfrac{\lambda ^{\varepsilon }}{2\left( \mu ^{\varepsilon }+\lambda
^{\varepsilon }\right) }\dfrac{\left( x_{1}-k_{1}\varepsilon \right)
^{2}-\left( x_{2}-k_{2}\varepsilon \right) ^{2}}{2}\dfrac{\partial ^{2}v_{2}%
}{\partial x_{3}^{2}}\left( k_{1}\varepsilon ,k_{2}\varepsilon ,x_{3}\right) 
\\ 
\qquad -\dfrac{\lambda ^{\varepsilon }}{2\left( \mu ^{\varepsilon }+\lambda
^{\varepsilon }\right) }\dfrac{\left( x_{1}-k_{1}\varepsilon \right) \left(
x_{2}-k_{2}\varepsilon \right) }{2}\dfrac{\partial ^{2}v_{1}}{\partial
x_{3}^{2}}\left( k_{1}\varepsilon ,k_{2}\varepsilon ,x_{3}\right)  \\ 
\left( \mathcal{R}_{\varepsilon 1}\left( v\right) \right) _{3}\left(
x_{1},x_{2},x_{3}\right) =-\left( x_{1}-k_{1}\varepsilon \right) \dfrac{%
\partial v_{1}}{\partial x_{3}}\left( k_{1}\varepsilon ,k_{2}\varepsilon
,x_{3}\right)  \\ 
\qquad -\left( x_{2}-k_{2}\varepsilon \right) \dfrac{\partial v_{2}}{%
\partial x_{3}}\left( k_{1}\varepsilon ,k_{2}\varepsilon ,x_{3}\right) .%
\end{array}%
\right. 
\]

The verification of the first assertion of the epi-convergence is obtained
computing the energy of the test-function associated to this $\mathcal{R}%
_{\varepsilon 1}\left( v\right) $. The verification of the second assertion
follows the same lines as in Theorem \ref{3.1}.\qquad $\square $
\end{proof}

\begin{remark}
\label{3.8}The extra term occuring in the energy functional described in
Proposition \ref{3.7} corresponds to the flexion of the fibers.
\end{remark}

\begin{remark}
\label{3.9}In the case $\gamma =0$, one can still prove that $\left(
\int_{T_{\varepsilon }}\left\vert \left( u_{\varepsilon }\right)
_{3}\right\vert dx/\left\vert T_{\varepsilon }\right\vert \right)
_{\varepsilon }$ is bounded, writing : $u_{\varepsilon }\left( s\right)
=\int_{0}^{s}\partial \left( u_{\varepsilon }\right) _{3}/\partial x_{3}dt$
and using some trivial arguments.\ Thus Lemma \ref{3.5} still implies the
existence of $e_{33}\left( v\right) $ in $L^{2}\left( \Omega \right) $, with 
$v_{3}=0$\ on $\Gamma _{1}$. We conjecture that the limit functional is 
\[
F^{oo}\left( u,v\right) =\dint_{\Omega }\sigma _{ij}\left( u\right)
e_{ij}\left( u\right) dx+\pi E_{o}\dint_{\Omega }\left( e_{33}\left(
v\right) \right) ^{2}dx.
\]
\end{remark}

\section{Further extensions}

\subsection{The case of a almost non-periodic distribution of fibers}

Let $\widetilde{\omega }$ be some open subset of $\mathbf{R}^2$ and $\theta $
be a $C^1$-diffeomorphism from $\widetilde{\omega }$ to $\omega $. We define
the following almost non-periodic distribution of non-homogeneous fibers as
follows. The fibers are defined as

\[
T_\varepsilon ^k=\left\{ \left( x_1,x_2,x_3\right) \mid \left( x_1-\theta
_1\left( k_1\varepsilon ,k_2\varepsilon \right) \right) ^2+\left( x_2-\theta
_2\left( k_1\varepsilon ,k_2\varepsilon \right) \right) ^2<\left(
r_\varepsilon \right) ^2\text{, }x_3\in \left] 0,L\right[ \right\} . 
\]

Replacing $\left( k_{1}\varepsilon ,k_{2}\varepsilon \right) $ by $\theta
\left( k_{1}\varepsilon ,k_{2}\varepsilon \right) $ in the local
test-functions and adapting the proof of Theorem \ref{3.1}, one can prove

\begin{theorem}
\label{4.1}Suppose that $\gamma $ is positive and finite and the
nonhomogeneous material filling in the fibers satisfies the usual conditions
of symmetry, uniform ellipticity and continuity and 
\[
\underset{x_{3}\in \lbrack 0,L],\varepsilon >0}{\sup }\left\vert \dfrac{%
\left( r_{\varepsilon }\right) ^{2}}{\varepsilon ^{2}}a_{ijkl}^{\varepsilon
}\left( x_{3}\right) \right\vert <+\infty \text{, }\dfrac{\left(
r_{\varepsilon }\right) ^{2}}{\varepsilon ^{2}}a_{ijkl}^{\varepsilon }\left(
x_{3}\right) \underset{\varepsilon \rightarrow 0}{\rightarrow }%
a_{ijkl}^{o}\left( x_{3}\right) \text{, }a.e.\text{ in }\Omega .
\]%
Then, the sequence $\left( F^{\varepsilon }\right) _{\varepsilon }$
epi-converges in the topology $\tau $ to the functional $F^{o}$ defined on $%
H^{1}\left( \Omega ,\mathbf{R}^{3}\right) \times L^{1}\left( \Omega ,\mathbf{%
R}^{3}\right) $ by: 
\[
F^{o}\left( u,v\right) =\left\{ 
\begin{array}{l}
\dint_{\Omega }\sigma _{ij}\left( u\right) e_{ij}\left( u\right) dx+2\pi
\gamma \dint_{\Omega }\left( v-u\right) ^{t}A\left( v-u\right) \left\vert
\nabla \theta ^{-1}\right\vert \left( x_{1},x_{2}\right) dx\  \\ 
\qquad +\pi \dint_{\Omega }E^{o}\left( x_{3}\right) e_{33}\left( v\right)
e_{33}\left( v\right) \left\vert \nabla \theta ^{-1}\right\vert \left(
x_{1},x_{2}\right) dx, \\ 
\hfill \text{if }\left( u,v\right) \in H_{\Gamma _{1}}^{1}\left( \Omega ,%
\mathbf{R}^{3}\right) \times V \\ 
+\infty \hfill \text{otherwise,}%
\end{array}%
\right. 
\]%
where $E^{o}\left( x_{3}\right) $ is Young's modulus associated to $%
a_{ijkl}^{o}\left( x_{3}\right) $.
\end{theorem}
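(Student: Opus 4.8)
The plan is to mirror the two-part epi-convergence proof of Theorem \ref{3.1}, making two structural modifications: relocating every local object from the lattice point $(k_1\varepsilon,k_2\varepsilon)$ to its image $\theta(k_1\varepsilon,k_2\varepsilon)$, and replacing the isotropic tensor built from $\lambda^\varepsilon,\mu^\varepsilon$ by the general tensor $a_{ijkl}^\varepsilon(x_3)$. First I would re-define the domains $B_\varepsilon^k,C_\varepsilon^k$, the plane correctors $w_\varepsilon^{mk}$ and the cut-off $z_\varepsilon^m$ exactly as before but centred at $\theta(k_1\varepsilon,k_2\varepsilon)$. Because the plane elasticity problems (\ref{Pm}) are translation-invariant and posed on the whole punctured plane, Lemmas \ref{2.1}--\ref{2.3} survive this shift verbatim, now with the coefficients $a_{ijkl}^\varepsilon(x_3)$ frozen at the cell; the continuity of $a_{ijkl}^\varepsilon$ in $x_3$ lets us treat them as locally constant on each cell up to an $o_\varepsilon$ error.

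For the upper bound (the $\limsup$ inequality), I would take smooth $u$ and $v$ and build the recovery sequence $u_\varepsilon^o$ through (\ref{uoeps}), with $\mathcal{R}_\varepsilon(v)$ defined as in the proof of Theorem \ref{3.1} but with the Poisson-type correction coefficient now determined by $a_{ijkl}^\varepsilon(x_3)$ so as to annihilate the in-plane stresses $\sigma_{11},\sigma_{22},\sigma_{12}$ and isolate the effective uniaxial response. The only genuinely new computation is the passage from the discrete sum over cells to a volume integral. Writing the relevant sum as $\sum_k\varepsilon^2\int_0^L g(\theta(k_1\varepsilon,k_2\varepsilon),x_3)\,dx_3$, I would recognise it as a Riemann sum over $\widetilde{\omega}$ for $g\circ\theta$ with mesh $\varepsilon$, converging to $\int_{\widetilde{\omega}}\int_0^L g(\theta(\xi),x_3)\,dx_3\,d\xi$; the change of variables $y=\theta(\xi)$ then produces the weight $|\nabla\theta^{-1}|(y)$. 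Applied to the mixed term this gives $2\pi\gamma\int_\Omega(v-u)^tA(v-u)|\nabla\theta^{-1}|\,dx$, and applied to the uniaxial fibre term (where $(r_\varepsilon)^2\varepsilon^{-2}a_{ijkl}^\varepsilon\to a_{ijkl}^o$) it gives $\pi\int_\Omega E^o(x_3)(e_{33}(v))^2|\nabla\theta^{-1}|\,dx$, exactly the claimed limit. A density and diagonalisation argument as in \cite[Corollary 1.18]{Att} then extends the inequality from smooth to general $(u,v)$.

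For the lower bound (the $\liminf$ inequality), I would reproduce Lemma \ref{3.5} to obtain that any $\tau$-limit $v$ lies in $V$ with $v_3=0$ on $\Gamma_1$; the argument through Fenchel's inequality and Riesz representation uses only the uniform ellipticity of $a_{ijkl}^\varepsilon$ and the boundedness of $(r_\varepsilon)^2\varepsilon^{-2}a_{ijkl}^\varepsilon$, both guaranteed by the hypotheses. Splitting $F^\varepsilon(u_\varepsilon)$ over $\Omega\setminus\overline{C_\varepsilon\cup T_\varepsilon}$, over $C_\varepsilon$ and over $T_\varepsilon$, the subdifferential (convexity) inequalities of Theorem \ref{3.1} carry over unchanged on the first region, the corrector computation on $C_\varepsilon$ yields the mixed term with its $|\nabla\theta^{-1}|$ weight by the same Riemann-sum argument, and the fibre integral is bounded below by the uniaxial quadratic form in $e_{33}(u_\varepsilon)$, whose liminf is controlled by $\pi\int_\Omega E^o(x_3)(e_{33}(v))^2|\nabla\theta^{-1}|\,dx$ via uniform ellipticity and the convergence of the rescaled tensor.

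The main obstacle I expect is the joint bookkeeping of the two perturbations: verifying that freezing the $x_3$-dependent tensor $a_{ijkl}^\varepsilon$ at each cell and simultaneously distorting the cell centres by $\theta$ produces only $o_\varepsilon$ errors in the corrector estimates of Lemma \ref{2.2}, and that the weight $|\nabla\theta^{-1}|$ emerges consistently in \emph{every} Riemann sum (both the mixed term and the uniaxial term) rather than only in one of them. Since $\theta$ is a $C^1$-diffeomorphism its Jacobian is bounded above and below, so the distorted cells $\theta(Y_\varepsilon^k)$ remain comparable to squares of side $\varepsilon$ and the local estimates stay uniform in $k$; this uniformity is what makes the adaptation go through.
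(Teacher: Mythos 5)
Your proposal follows exactly the route the paper itself indicates for Theorem \ref{4.1}: the paper's entire argument is the single remark that one replaces $\left( k_{1}\varepsilon ,k_{2}\varepsilon \right) $ by $\theta \left( k_{1}\varepsilon ,k_{2}\varepsilon \right) $ in the local test-functions and adapts the proof of Theorem \ref{3.1}, which is precisely your plan, and your Riemann-sum/change-of-variables computation correctly accounts for the weight $\left\vert \nabla \theta ^{-1}\right\vert $ in both limit terms. The only small imprecision is the phrase suggesting that Lemmas \ref{2.1}--\ref{2.3} now involve $a_{ijkl}^{\varepsilon }$: the correctors $w^{m}$ live in the matrix material and keep the Lam\'{e} constants $\lambda ,\mu $ (hence the unchanged matrix $A$), while $a_{ijkl}^{o}$ enters only through the fiber term $E^{o}\left( x_{3}\right) $ --- which your sketch otherwise gets right.
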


\subsection{The case of tranverse fibers}

Let us assume in this paragraph that $\omega $ is the disk centred at the
origin and of radius $R>0$ of $\mathbf{R}^2$. Choose any $R^{*}$ in $\left]
0,R\right] $ and positive $\varepsilon $ and $r_\varepsilon $ such that : $%
0<2r_\varepsilon <\varepsilon <1$. For every $k$ in $\mathbf{Z}$, we
introduce the torus $T_\varepsilon ^k$ defined as 
\[
T_\varepsilon ^k=\left\{ \left( x_1,x_2,x_3\right) \in \mathbf{R}^3\mid
\left( R^{*}-\sqrt{\left( x_1\right) ^2+\left( x_2\right) ^2}\right)
^2+\left( x_3-k\varepsilon \right) ^2<\left( r_\varepsilon \right)
^2\right\} . 
\]

$T_\varepsilon $ denotes the union $\dbigcup $ $_{k=-n(\varepsilon
)}^{k=n(\varepsilon )}T_\varepsilon ^k$ of the tori $T_\varepsilon ^k$ $%
\varepsilon $-periodically distributed along the surface : $\Sigma
_{R^{*}}=\left\{ \left( x_1\right) ^2+\left( x_2\right) ^2=\left(
R^{*}\right) ^2\text{, }x_3\in \left] 0,L\right[ \right\} $ and contained in 
$\Omega =\omega \times \left] 0,L\right[ $. We suppose that $\overline{%
T_\varepsilon }\cap \Gamma _1$ and $\overline{T_\varepsilon }\cap \Gamma _2$
are empty. The number $n\left( \varepsilon \right) $ of such tori contained
in $\Omega $ is equivalent to $L/\varepsilon $.

We define the topology $\tau ^{*}$ as

\[
\begin{array}{c}
u_\varepsilon \overset{\tau ^{*}}{\underset{\varepsilon \rightarrow 0}{%
\rightharpoonup }}\left( u,v\right) \Leftrightarrow u_\varepsilon \overset{w%
\text{-}H^1\left( \Omega ,\mathbf{R}^3\right) }{\underset{\varepsilon
\rightarrow 0}{\rightharpoonup }}u \\ 
\text{and : }\forall \varphi \in C_c^o\left( \mathbf{R}^3\right)
:\dint_{\Sigma _{R^{*}}}\left( R^{\varepsilon ^{*}}\left( u_\varepsilon
\right) \varphi \right) _{\mid \Sigma _{R^{*}}}d\sigma \underset{\varepsilon
\rightarrow 0}{\rightarrow }\dint_{\Sigma _{R^{*}}}\left( v\varphi \right)
_{\mid \Sigma _{R^{*}}}d\sigma ,%
\end{array}
\]

\noindent where $R^{\varepsilon ^{*}}$ is defined by : $R^{\varepsilon
^{*}}\left( u\right) =\left| \Sigma _{R^{*}}\right| u\mathbf{1}%
_{T_\varepsilon }/\left| T_\varepsilon \right| $. We introduce the space

\[
V^{*}=\left\{ 
\begin{array}{r}
v=\left( v_r,v_\theta ,v_{x_3}\right) :\left[ 0,2\pi \right] \times \left]
0,L\right[ \rightarrow \mathbf{R}^3\mid v_\alpha \in L^2\left( \left] 0,2\pi %
\right[ \times \left] 0,L\right[ \right) ,\quad \\ 
\quad v_\alpha \left( 0,.\right) =v_\alpha \left( 2\pi ,.\right) \text{, }%
\alpha =r,\theta ,x_3\text{, }\dfrac{\partial v_\theta }{\partial \theta }%
+v_r\in L^2\left( \left] 0,2\pi \right[ \times \left] 0,L\right[ \right) .%
\end{array}
\right\} 
\]

\begin{center}
\FRAME{fhFU}{3.0234in}{1.9761in}{0pt}{\Qcb{The cylinder $\Omega $ and the
tori $T_{\protect\varepsilon }^{k}$.}}{}{jar5.gif}{\special{language
"Scientific Word";type "GRAPHIC";maintain-aspect-ratio TRUE;display
"USEDEF";valid_file "F";width 3.0234in;height 1.9761in;depth
0pt;original-width 2.9793in;original-height 1.9372in;cropleft "0";croptop
"1";cropright "1";cropbottom "0";filename 'JAR5.gif';file-properties
"XNPEU";}}
\end{center}

Following similar arguments to the ones presented in the previous parts, we
prove

\begin{theorem}
\label{4.2}Suppose that $\gamma ^{\ast }=\lim_{\varepsilon \rightarrow
0}\left( -1/\left( \varepsilon \ln r_{\varepsilon }\right) \right) $ is
finite, $\lambda _{o}^{\ast }$ and $\mu _{o}^{\ast }$ are finite and $\mu
_{o}^{\ast }$ is positive, with:%
\[
\lambda _{o}^{\ast }=\ \underset{\varepsilon \rightarrow 0}{\lim }\dfrac{%
\lambda ^{\varepsilon }\left( r_{\varepsilon }\right) ^{2}}{\varepsilon }%
\text{, }\mu _{o}^{\ast }=\text{ }\underset{\varepsilon \rightarrow 0}{\lim }%
\dfrac{\mu ^{\varepsilon }\left( r_{\varepsilon }\right) ^{2}}{\varepsilon }.
\]%
Then, the sequence $\left( F^{\varepsilon }\right) _{\varepsilon }$
epi-converges in the topology $\tau ^{\ast }$ to the functional $F^{o^{\ast
}}$ defined on $H^{1}\left( \Omega ,\mathbf{R}^{3}\right) \times L^{1}\left(
\Omega ,\mathbf{R}^{3}\right) $ by: 
\[
F^{o^{\ast }}\left( u,v\right) =\left\{ 
\begin{array}{l}
\dint_{\Omega }\sigma _{ij}\left( u\right) e_{ij}\left( u\right) dx+\pi
E_{o}^{\ast }\dint_{0}^{2\pi }\dint_{0}^{L}\left( \dfrac{\partial v_{\theta }%
}{\partial \theta }+v_{r}\right) ^{2}\left( R^{\ast },\theta ,x_{3}\right)
d\theta dx_{3} \\ 
\quad +2\pi \gamma ^{\ast }R^{\ast }\dint_{0}^{2\pi }\dint_{0}^{L}\left(
v-u_{\mid \Sigma _{R^{\ast }}}\right) ^{t}A\left( v-u_{\mid \Sigma _{R^{\ast
}}}\right) \left( R^{\ast },\theta ,x_{3}\right) d\theta dx_{3}, \\ 
\hfill \text{if }\left( u,v\right) \in H_{\Gamma _{1}}^{1}\left( \Omega ,%
\mathbf{R}^{3}\right) \times V^{\ast } \\ 
+\infty \hfill \text{otherwise,}%
\end{array}%
\right. 
\]%
with $A$ as in Theorem \ref{3.1} and $E_{o}^{\ast }=\mu _{o}^{\ast }\left(
3\lambda _{o}^{\ast }+2\mu _{o}^{\ast }\right) /\left( \lambda _{o}^{\ast
}+\mu _{o}^{\ast }\right) $.
\end{theorem}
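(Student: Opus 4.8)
The plan is to adapt the proof of Theorem \ref{3.1}, transporting every ingredient to the toroidal geometry. The essential change is that the relevant two-dimensional capacity problem now lives in the meridional half-plane spanned by the radial direction $\hat{r}$ and the axial direction $\hat{x}_3$, transverse to the circular core $\{\sqrt{x_1^2+x_2^2}=R^*,\ x_3=k\varepsilon\}$ of each torus, whereas the role played before by the longitudinal direction $x_3$ is now played by the azimuthal direction $\theta$. Consequently the discretization is one-dimensional (the index $k$ runs along $x_3$ with spacing $\varepsilon$, whence the number of tori is $\sim L/\varepsilon$ and the scaling $\gamma^*=\lim_{\varepsilon\to0}(-1/(\varepsilon\ln r_\varepsilon))$), while along the fibre axis the material is continuous and is integrated against the arc-length element $R^*\,d\theta$; this is what replaces the two-dimensional Riemann sums $\sum_{k\in K(\varepsilon)}\varepsilon^2(\cdots)$ of Section 2 by the one-dimensional sums $\sum_k\varepsilon(\cdots)\to\int_0^L(\cdots)\,dx_3$ coupled with the continuous $\theta$-integration, and what produces the factor $R^*$.

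I would first reconstruct the local test-functions. Writing $(\rho,x_3)$ for the meridional coordinates centred at the core of $T_\varepsilon^k$, I would define $w_\varepsilon^{mk}$ exactly as in Section 2 but with the rescaled meridional coordinates as arguments, the two transverse components being now the radial and axial ones and the along-the-fibre component being the azimuthal one. The plane solutions $w^m$, the harmonic profile $w$ and Lemmas \ref{2.1} and \ref{2.2} carry over verbatim, since the meridional cross-section is again a disk; the analogue of Lemma \ref{2.3} must be re-derived with the one-dimensional sum and the surface measure, so that the per-torus logarithmic capacity $\sim -1/\ln r_\varepsilon$ now yields the constants $2\pi\gamma^*R^*\mu(1+\kappa)/\kappa$ and $2\pi\gamma^*R^*\mu$. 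For the recovery sequence I would introduce a reconstruction $\mathcal{R}_\varepsilon(v)$ of the fibre displacement in the meridional plane, mimicking the structure that led to (\ref{uoeps}) with the Poisson-type corrections carried by the radial and axial variables, and glue it to the bulk field $u$ through $z_\varepsilon^m$ and a cut-off $\psi_\varepsilon$ vanishing near $\Gamma_1$. Splitting $F^\varepsilon(u_\varepsilon^o)$ into its bulk, collar and fibre parts as in (\ref{Fepsuoe}), the bulk integral tends to $\int_\Omega\sigma_{ij}(u)e_{ij}(u)\,dx$, the collar integral tends to the mixed term $2\pi\gamma^*R^*\int(v-u_{\mid\Sigma_{R^*}})^tA(v-u_{\mid\Sigma_{R^*}})$, and the fibre integral, governed by the azimuthal stretching $\sigma_{\theta\theta}^\varepsilon\,e_{\theta\theta}$, tends to $\pi E_o^*\int(\partial v_\theta/\partial\theta+v_r)^2$; the density and diagonalization argument of \cite[Corollary 1.18]{Att} then removes the smoothness restriction on $(u,v)$.

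For the lower bound I would first establish the analogue of Lemma \ref{3.5}: using a trace estimate for the tori of the type (\ref{Korn}) together with the uniform energy bound, I would show that $R^{\varepsilon^*}(u_\varepsilon)$ converges in the weak$^\ast$ topology of $L^1$ to a limit $v$ whose circumferential combination $\partial v_\theta/\partial\theta+v_r$ belongs to $L^2(\left]0,2\pi\right[\times\left]0,L\right[)$ and which satisfies the periodicity $v_\alpha(0,\cdot)=v_\alpha(2\pi,\cdot)$ and the trace constraints defining $V^*$. I would then split $F^\varepsilon(u_\varepsilon)$ into its three parts and bound each from below by the convexity (subdifferential) inequality, exactly as in (\ref{calc4})--(\ref{calc7}): the outer part gives $\int_\Omega\sigma_{ij}(u)e_{ij}(u)$; the collar part, after integrating by parts against $\sigma_{ij}(w_\varepsilon^{mk})$ and using $\sigma_{ij,j}(w_\varepsilon^{mk})=0$ with the boundary behaviour on $S_1$, yields the mixed term; and the fibre part, via the elementary inequality $\lambda^\varepsilon(x+y+z)^2+2\mu^\varepsilon(x^2+y^2+z^2)\ge\mu^\varepsilon\frac{2\mu^\varepsilon+3\lambda^\varepsilon}{\mu^\varepsilon+\lambda^\varepsilon}z^2$ applied to the azimuthal strain, yields the $E_o^*$ term.

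The main obstacle will be the book-keeping forced by the curvilinear geometry rather than any new analytic difficulty. Two points need genuine care. First, one must check that the correct axial strain for a torus is the circumferential combination $e_{\theta\theta}=(\partial v_\theta/\partial\theta+v_r)/R^*$, and that $\mathcal{R}_\varepsilon(v)$ reproduces it modulo errors that vanish thanks to the intermediate scale $s_\varepsilon$ and to $r_\varepsilon\ll R^*$; the curvature of the core circle enters through the Jacobian $(R^*+\rho)\,d\rho\,d\theta$ and must be controlled by $r_\varepsilon/R^*\to0$. Second, the passage from the one-dimensional sum over $k$ to the integral $\int_0^L dx_3$ coupled with the continuous $\theta$-integration must be justified uniformly in $\theta$, which is where the rescaled intermediate-scale conditions on $s_\varepsilon$ analogous to those of Section 2 are used; once these are in place, the three target terms of $F^{o^*}$ emerge exactly as in Theorem \ref{3.1}.
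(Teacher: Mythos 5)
Your proposal follows exactly the route the paper intends: the paper offers no proof of Theorem \ref{4.2} beyond the remark that one follows ``similar arguments to the ones presented in the previous parts,'' and your plan is a faithful, correctly detailed elaboration of that adaptation --- the meridional capacity problem replacing the transverse one, the one-dimensional sum over $k$ with $\sim L/\varepsilon$ tori explaining the scalings of $\gamma^{\ast}$, $\lambda_{o}^{\ast}$, $\mu_{o}^{\ast}$, the factor $R^{\ast}$ from the arc-length element, and the azimuthal strain $\partial v_{\theta}/\partial \theta+v_{r}$ taking over the role of $e_{33}(v)$. Since you correctly identify the two points needing genuine care (the curvilinear Jacobian controlled by $r_{\varepsilon}/R^{\ast}\rightarrow 0$ and the uniformity in $\theta$ of the Riemann-sum passage), your approach is essentially the same as the paper's and I see no gap.
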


\textbf{ACKNOWLEDGEMENT}

We thank the referee for the useful comments on this work.

\end{document}